\newtheorem{thm}{Theorem}[section]
\newtheorem{prop}[thm]{Proposition}
\newtheorem{cor}[thm]{Corollary}
\newtheorem{lem}[thm]{Lemma}
\newtheorem*{lem*}{Lemma}
\newtheorem*{claim*}{Claim}
\theoremstyle{remark}
\newtheorem{rem}[thm]{Remark}
\newtheorem*{rem*}{Remark}
\theoremstyle{definition}
\newtheorem{defi}[thm]{Definition}
\newtheorem*{rep@theorem}{\rep@title}
\newcommand{\newreptheorem}[2]{%
\newenvironment{rep#1}[1]{%
 \def\rep@title{#2 \ref{##1}}%
 \begin{rep@theorem}}%
 {\end{rep@theorem}}}
\begin{document}
	
\bibliographystyle{alpha}
\title{\textbf{On the dimension of limit sets on $\P(\R^3)$ via stationary measures: variational principles and applications}}
\author{Yuxiang Jiao, Jialun Li, Wenyu Pan and Disheng Xu}
\date{}
\maketitle
\begin{abstract}

In this article, we establish the variational principle of the affinity exponent of Borel Anosov representations. We also establish such a principle of the Rauzy gasket. In \cite{LPX},  they obtain a dimension formula of the stationary measures on $\PP(\RR^3)$. Combined with our result, it allows us to study the Hausdorff dimension of limit sets of Anosov representations in $\SL_3(\R)$ and the Rauzy gasket. It yields the equality between the Hausdorff dimensions and the affinity exponents in both settings. In the appendix, we improve the numerical lower bound of the Hausdorff dimension of Rauzy gasket to $1.5$.


\end{abstract}

\tableofcontents

\section{Introduction}
This article is the second part of \cite{LPX}. We provide different examples of computing the Hausdorff dimension of limit sets on a projective space using stationary measures.  

First, we consider the limit sets of Anosov representations. 
For a finitely generated hyperbolic group $\Gamma,$ let $|\cdot|$ be the word norm on $\Gamma$ with respect to a fixed finite generating set. 
A homomorphism $\rho:\Gamma\to\SL_n(\RR)$ is called a \textit{Borel Anosov representation} if there exists $c>0$ such that for every $1\leqslant p\leqslant n-1,$
 \[\frac{\sigma_p(\rho(\gamma))}{\sigma_{p+1}(\rho(\gamma))}\geqslant ce^{c|\gamma|},\quad\forall \gamma\in\Gamma ,\]
where $\sigma_p(g)$ denotes the $p$-th maximal singular value of $g\in\SL_n(\RR).$

The concept of Anosov representation was first introduced by Labourie in \cite{labourie_anosov_2006} to study Hitchin components of the representations of surface groups. They are a generalization of convex cocompact representations in $\mr{PSO}(n,1).$
Given a Borel Anosov representation $\rho,$ we consider the action of $\rho(\Gamma)$ on the projective space $\PP(\RR^n)$. It always admits a limit set, denoted by $L(\rho(\Gamma)),$ which is the closure of attracting fixed points of $\rho(\gamma)$ on $\PP(\RR^n)$ for $\gamma\in\Gamma$.

To compute the Hausdorff dimension of the limit set of $\rho(\Gamma)$, the difficulty lies in the fact that $\rho(\Gamma)$ usually acts on $\PP(\RR^n)$ non-conformally. For example, consider the action of diagonal matrix $\mathrm{diag}(2,1,1/2) $ on $\P(\R^3)$. The stretch rates in different directions are different. When computing the Hausdorff dimension of the set, one needs to estimate the minimal number of balls to cover it. Naturally, in the non-conformal setting, an efficient way is to arrange the balls according to the stretch rate. Therefore, it leads us to consider the \textit{affinity exponent} and we expect the Hausdorff dimension of the limit set equals its affinity exponent. 
The concept of affinity exponent was proposed by Falconer to study the Hausdorff dimension of self-affine fractals \cite{falconer_hausdorff_1988}. Later, Pozzetti-Sambarino-Wienhard \cite{pozzetti_anosov_nodate} extended this concept to study Anosov representations.

For $0\leqslant s\leqslant n-1,$ let $\psi_s:\SL_n(\RR)\to\RR$ be given by
\[\psi_s(g)\defeq\sum_{1\leqslant i\leqslant \lfloor s\rfloor }(\log\sigma_1(g)-\log\sigma_{i+1}(g))+(s-\lfloor s\rfloor)(\log\sigma_1(g)-\log\sigma_{\lfloor s\rfloor+2}(g)),\quad g\in\SL_n(\RR).\]
Then the affinity exponent of $\rho$ is given by
\begin{equation}\label{eqn: critical exponent}
	s_{\mathrm{A}}(\rho)\defeq\sup\lb{s: \sum_{\gamma\in\Gamma}\exp(-\psi_{s}(\rho(\gamma)))=\infty }.
\end{equation}

It is shown in \cite{pozzetti_anosov_nodate} that the affinity exponent is always an upper bound of the Hausdorff dimension of the limit set. 
To obtain the equality between two notions of dimensions, it remains to show a reversed inequality.
An approach to give a lower bound of the dimension of $L(\rho(\Gamma))$ is to consider measures supported on it. A probability Borel measure $\mu$ on a metric space is called \textit{exact dimensional} if there exists $\alpha$ such that 
\[\lim_{r\to 0}\frac{\log\mu(B(x,r))}{\log r}=\alpha,\quad \mu\mr{-a.e.}~x ,\]
and $\alpha$ is called the exact dimension of $\mu,$ which will be denoted by $\dim\mu$.
Due to a result by Young \cite{young_dimension_1982}, the Hausdorff dimension of a set is bounded below by the exact dimension of measures (if exist) supported on it. Therefore, our approach is to construct satationary measures supported on the limit set whose exact dimensions approximate the affinity exponent.

Let us recall the definition of Lyapunov dimension of stationary measures, which is the expected value of exact dimensions. Let $\nu$ be a finitely supported probability measure on $\SL_n(\RR)$ with the Lyapunov spectrum $\lambda(\nu)=\lb{ \lambda_1(\nu)\geqslant\cdots\geqslant\lambda_n(\nu) }.$
We denote $\chi_i(\nu)=\lambda_1(\nu)-\lambda_{i+1}(\nu)$ for $1\leqslant i\leqslant n-1.$
Let $\mu$ be a $\nu$-stationary measure on $\PP(\RR^n).$
The \textit{Furstenberg entropy} is given by
\begin{equation}\label{equ:f entropy}
    h_{\mathrm{F}}(\mu,\nu)=\int \log\frac{\dd g\mu}{\dd\mu}(\xi) \left(\frac{\dd g\mu}{\dd\mu}(\xi) \right)\dd\nu(g)\dd\mu(\xi).
\end{equation}
Assume further that $\chi_{n-1}(\nu)>0,$ then the Lyapunov dimension of $\mu$ is defined to be
\begin{equation}\label{eqn: Lyapunov dimension}
	\dim_{\mr{LY}}\mu= d+\frac{h_{\mr{F}}(\mu,\nu)-(\chi_1(\nu)+\cdots+\chi_d(\nu))}{\chi_{d+1}(\nu)}, 
\end{equation}
where $d$ is the maximal integer such that $\chi_1(\nu)+\cdots+\chi_d(\nu)\leqslant h_{\mr{F}}(\mu,\nu).$ 

To achieve the affinity exponent in our setting, we will approximate it by Lyapunov dimensions of stationary measures. We call it a \textit{variational principle of critical exponent}. Such variational principle has been considered in several different contexts to obtain a lower bound of the Hausdorff dimension of dynamically invariant sets: Morris-Shmerkin \cite{morris_equality_2016} and Morris-Sert \cite{morris_variational_2023} on self-affine IFSs on $\RR^n$ and He-Jiao-Xu \cite{HJX} on $\Diff(\SS^1)$. 

In conclusion, our approach can be roughly summarized by the following inequalities
\[ s_{\mathrm{A}}(\rho)\stackrel{\text{(1)}}{\geqslant}\dim L(\rho(\Gamma))\stackrel{\text{(2)}}{\geqslant} \sup_\mu\dim\mu \stackrel{\text{(3)}}{=}\sup_\mu\dim_{\mr{LY}}\mu\stackrel{\text{(4)}}{\geqslant} s_{\mathrm{A}}(\rho);\]
here (1) is established in \cite{pozzetti_anosov_nodate}, (2) is established in \cite{young_dimension_1982}, \cite{rapaport_exact_2021} and \cite{ledrappier_exact_2021}, (3) is established in \cite{hochman_dimension_2017} and \cite{LPX} for special cases. We obtain the following variational principle, which establishes (4). We say $\rho:\Gamma\to\SL_n(\RR)$ is Zariski dense if $\rho(\Gamma)$ is Zariski dense in $\SL_n(\RR).$
\begin{thm}
\label{thm: variational principle}
    Let $\rho:\Gamma\to\SL_n(\RR)$ be a Zariski dense Borel Anosov representation. For every $\ve>0,$ there exists a finitely supported probability measure $\nu$ on $\rho(\Gamma)$ whose support generates a Zariski dense subgroup such that the unique $\nu$-stationary measure $\mu$ satisfying 
    \[\dim_{\mr{LY}}\mu\geqslant s_{\mathrm{A}}(\rho)-\ve.\]
\end{thm}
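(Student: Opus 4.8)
The plan is to build $\nu$ by taking a carefully chosen finite subset $F\subset\Gamma$ and letting $\nu$ be (close to) the uniform measure on $\{\rho(\gamma):\gamma\in F\}$, with $F$ consisting of elements whose singular value gaps are all large and comparable, so that the random matrix product behaves almost like a deterministic self-affine system on $\P(\RR^n)$. Concretely, I would first fix $s<s_{\mathrm A}(\rho)$ with $s>s_{\mathrm A}(\rho)-\ve/2$, and use the definition \eqref{eqn: critical exponent} of the affinity exponent: since $\sum_{\gamma\in\Gamma}\exp(-\psi_s(\rho(\gamma)))=\infty$, for every large $n$ the partial sums over the ``shell'' $\{\gamma: |\gamma|\approx n\}$ (or over $\{\psi_{s}(\rho(\gamma))\in[T,2T]\}$) carry a definite proportion of the mass. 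Using the Anosov property $\sigma_p/\sigma_{p+1}\geq ce^{c|\gamma|}$ (which guarantees all relevant gaps $\chi_i$ grow linearly in $|\gamma|$ and hence are uniformly large on a shell) together with subadditivity of the $\psi_s$-type functions and the fact that on such a shell $\psi_s$ is essentially a fixed linear combination $\sum_{i\le\lfloor s\rfloor}\chi_i+(s-\lfloor s\rfloor)\chi_{\lfloor s\rfloor+1}$ of the gaps of the \emph{average}, I would extract a finite set $F$ of words of length $\approx n$ (with $n$ large) such that $\#F\cdot\exp(-\psi_s(\text{typical element}))\gtrsim 1$, i.e.\ $\log\#F\ge \psi_s(\text{typical }\rho(\gamma)) - o(n)$.

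Next I would set $\nu=\frac{1}{\#F}\sum_{\gamma\in F}\delta_{\rho(\gamma)}$ (possibly after a symmetrization or a standard trick — e.g.\ replacing $F$ by $F\cup F^{-1}$ or passing to a large power $\nu^{*k}$ — to ensure the support generates a Zariski dense subgroup; Zariski density of $\rho(\Gamma)$ plus the fact that $F$ sits in a single large shell should make this easy, as any finite subset generating a Zariski dense group suffices and one can always enlarge $F$ by finitely many generators without hurting the entropy estimate to leading order). For this $\nu$ one computes the Lyapunov exponents: because all elements of $F$ have length $\approx n$ and nearly aligned Cartan projections (this is the content of choosing a ``thin'' shell), the Lyapunov spectrum $\lambda(\nu)$ is within $o(n)$ of $\frac1n$ times the common Cartan projection, so $\chi_i(\nu)\approx \chi_i(\text{typical }\rho(\gamma))$ after normalization, and in particular $\chi_{n-1}(\nu)>0$. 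The Furstenberg entropy is bounded below by the random-walk entropy $\hRW(\nu)=\log\#F$ divided by the normalization (the general inequality $\hFur\le \hRW$ is standard, but here I need the \emph{reverse-type} lower bound on $\hFur$): this is exactly where the work concentrates — I would invoke a Ledrappier–Young / projection-entropy estimate showing that for measures built from such ``Schottky-like'' or ``well-separated'' families on $\P(\RR^n)$, the stationary measure has Furstenberg entropy asymptotically equal to $\hRW(\nu)$, i.e.\ the walk is ``entropy-efficient'' on the projective space. Feeding $h_{\mathrm F}(\mu,\nu)\gtrsim \log\#F\gtrsim\psi_s$ and $\chi_i(\nu)\approx\chi_i/n$ into the definition \eqref{eqn: Lyapunov dimension} of $\dim_{\mathrm{LY}}\mu$ and unwinding the formula for $\psi_s$ shows $\dim_{\mathrm{LY}}\mu\ge s-o(1)\ge s_{\mathrm A}(\rho)-\ve$ once $n$ is large.

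The main obstacle, as indicated, is the lower bound $h_{\mathrm F}(\mu,\nu)\ge(1-o(1))\,\hRW(\nu)$: in general the Furstenberg entropy can be strictly smaller than the random-walk entropy when there is ``overlap'' or collapse of the pushed-forward measure on $\P(\RR^n)$, so one must choose $F$ not merely to have large $\psi_s$-mass but also to be \emph{effectively free} / in Schottky position on $\P(\RR^n)$ up to the relevant scale — i.e.\ the cylinder sets $\rho(\gamma)B_0$ for $\gamma\in F$ should be essentially disjoint and the contraction/distortion along each $\rho(\gamma)$ controlled by its singular values (this is where the Anosov hypothesis, giving uniform transversality of the limit maps $\xi,\xi^*$ and uniform domination, is essential). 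Establishing this separation — and then turning it into the matching entropy lower bound, presumably via a Rokhlin-disintegration argument along the flag-bundle fibration or via the dimension formula machinery of \cite{LPX} applied to the explicit $\nu$ — is the crux; the rest is bookkeeping with the piecewise-linear function $\psi_s$ and the concavity/monotonicity built into \eqref{eqn: Lyapunov dimension}. A secondary technical point is ensuring uniqueness of the $\nu$-stationary measure, which follows from Zariski density of $\langle\supp\nu\rangle$ together with strong irreducibility and proximality on $\P(\RR^n)$ (Furstenberg's theorem), so it comes for free once the support-genericity of $F$ is arranged.
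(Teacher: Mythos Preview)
Your high-level outline is correct and matches the paper's strategy: extract a large finite set $F$ from a shell where $\psi_s\approx N$, take $\nu$ close to uniform on $\rho(F)$, and arrange that $h_{\mathrm F}(\mu,\nu)\gtrsim\log\#F$ while $\psi_s(\lambda(\nu))\lesssim N$. But the proposal has two real gaps.

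\medskip

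\textbf{The entropy lower bound.} You correctly flag $h_{\mathrm F}(\mu,\nu)\geq(1-o(1))h_{\mathrm{RW}}(\nu)$ as the crux, but your plan to obtain it --- via Schottky-type separation of cylinders on $\P(\R^n)$ and a Ledrappier--Young argument --- is not the paper's route and is not obviously completable as stated. The paper instead proves the \emph{exact} equality $h_{\mathrm F}(\mu,\nu)=h_{\mathrm{RW}}(\nu)$ by a boundary argument (Proposition~\ref{prop:equal.entropy.geometry}): since $\rho(\Gamma)$ is discrete, the flag-variety stationary measure $\mu_{\cal F}$ realizes the Poisson boundary of $(G_\nu,\nu)$, so $h_{\mathrm F}(\mu_{\cal F},\nu)=h_{\mathrm{RW}}(\nu)$; then the Anosov limit map $\partial\Gamma\to\cal F(\R^n)$ forces the projection $L_{\cal F}(\rho(\Gamma))\to L(\rho(\Gamma))$ to have trivial fiber, which gives $h_{\mathrm F}(\mu,\nu)=h_{\mathrm F}(\mu_{\cal F},\nu)$. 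No quantitative separation on $\P(\R^n)$ is needed.

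\medskip

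\textbf{Freeness and Zariski density simultaneously.} Your suggestions ``symmetrize $F$'' or ``throw in finitely many generators'' are dangerous: adding elements can destroy the freeness that makes $h_{\mathrm{RW}}(\nu)=\log\#F$, and symmetrizing ruins the Cartan-projection alignment. The paper handles both issues together via a geometric-group-theory construction (Proposition~\ref{prop: free semigroup}): one fixes in advance a finite set $F'$ of pairwise independent loxodromics with $\rho(F')^m$ generating a Zariski dense semigroup, and then, given any large $S\subset A(L)$, an extension lemma in the hyperbolic group produces $S'\subset S$ so that $\wt S=\{sf^\varsigma:s\in S',\,f\in F',\,\varsigma\in\{m,2m\}\}$ \emph{freely} generates a free semigroup \emph{and} satisfies a constant-cancellation bound $|\wt s_1\cdots\wt s_k|\geq\sum|\wt s_i|-kC_2$. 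The Zariski density of $G_\nu$ then comes for free from $\rho(f)^m=\rho(sf^m)^{-1}\rho(sf^{2m})\in G_\nu$, and the constant-cancellation bound feeds into an almost-additivity lemma for singular values (Lemma~\ref{lem: almost additivity}) to control $\psi_s(\lambda(\nu))$. Neither step is bookkeeping; both are the substance of Sections~\ref{sec: geometric group thry}--\ref{sec:limitset}.
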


To obtain the approximations, we need to find a probability measure $\nu$ on $\rho(\Gamma)$ with large entropy and controlled Lyapunov exponents. In our setting, the Furstenberg entropy equals the random walk entropy \eqref{eqn: random walk entropy}, which characterizes the freeness of the semigroup generated by $\supp\nu.$
We aim to find a finite subset of $\rho(\Gamma)$ which freely generates a free semigroup, and consider the uniform random walk on this subset.

At the core of the proof, we use a geometric group theoretic argument to construct free semigroups, which is different from the one in the IFS setting. 
Thanks to the hyperbolicity of the group, we can always approximate the group by free semigroups, in the sense of the growth rate of groups.
However, the approximating semigroups should satisfy some additional conditions coming from the dynamics. 
Our key argument, as presented in \cref{sec: geometric group thry,sec:limitset}, establishes such a desired construction.

A direct application of Theorem \ref{thm: variational principle} is computing the Hausdorff dimension of limit sets of Anosov representations in $\SL_3(\RR).$
Let us recall the dimension formula of stationary measures established in the first part of our paper \cite{LPX}.

\begin{thm}[{\cite[Theorem 1.10]{LPX}}]\label{thm:lyapunov}
    Let $\nu$ be a Zariski dense, finitely supported probability measure on $\SL_3(\R)$ that satisfies the exponential separation condition, and $\mu$ be its Furstenberg measure on $\P(\R^3)$. Then we have $\dim\mu=\dim_{\rm{LY}}\mu. $
\end{thm}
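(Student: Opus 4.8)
The plan is to combine three ingredients: exact dimensionality of $\mu$; a Ledrappier--Young type dimension formula that reduces the theorem to a single one-dimensional statement; and a Hochman-style inverse-entropy argument for that statement, which is the only genuinely hard part. Throughout, Zariski density forces the Lyapunov spectrum to be simple, $\lambda_1(\nu)>\lambda_2(\nu)>\lambda_3(\nu)$, so with $\chi_1=\lambda_1(\nu)-\lambda_2(\nu)$ and $\chi_2=\lambda_1(\nu)-\lambda_3(\nu)$ one has $0<\chi_1<\chi_2$, the two rates at which the action on $\P(\R^3)$ contracts transversally to the attracting line; correspondingly, near that fixed point the image of a fixed neighbourhood under $g_1\cdots g_n$ is, up to exponentially small error, an ellipse with semi-axes $\asymp e^{-\chi_1 n}$ and $\asymp e^{-\chi_2 n}$. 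For the first ingredient, exact dimensionality of the Furstenberg measure on $\P(\R^3)$ is available from the work of Ledrappier--Lessa and of Rapaport (see \cite{rapaport_exact_2021, ledrappier_exact_2021}), and it reduces the theorem to computing the single number $\dim\mu$.

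I would next use the (non-conformal) Ledrappier--Young formula for stationary measures on $\P(\R^3)$, namely
\[
\dim\mu=\delta_1+\delta_2,\qquad \hFur(\mu,\nu)=\chi_1\delta_1+\chi_2\delta_2,\qquad 0\leq\delta_1,\delta_2\leq 1,
\]
obtained by disintegrating $\mu$ along the two dynamically defined (slow and fast) contracting foliations and invoking the Furstenberg entropy formula \eqref{equ:f entropy}: here $\delta_2$ is the dimension of the conditionals of $\mu$ on the leaves of the fast foliation (the one contracted at rate $\chi_2$), and $\delta_1=\dim(\pi_*\mu)$ is the dimension of the push-forward of $\mu$ under the projection $\pi$ collapsing that foliation, a measure carried by a one-dimensional transversal. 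Substituting $\delta_2=(\hFur-\chi_1\delta_1)/\chi_2$ gives $\dim\mu=\delta_1(1-\chi_1/\chi_2)+\hFur/\chi_2$, which is increasing in $\delta_1$, while $\delta_1\leq\min(1,\hFur/\chi_1)$ always; hence $\dim\mu\leq\dim_{\rm{LY}}\mu$, and a short comparison with \eqref{eqn: Lyapunov dimension} shows that equality holds \emph{if and only if} $\delta_1=\min(1,\hFur(\mu,\nu)/\chi_1)$. So everything reduces to showing that the slow marginal $\pi_*\mu$ has the largest dimension its entropy permits.

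To prove this, iterate the stationarity relation $\mu=\int(g_1\cdots g_n)_*\mu\,\dd\nu^{\otimes n}$ and push through $\pi$: then $\pi_*\mu$ becomes an average of component measures, the one attached to a length-$n$ word supported on a sub-interval of the transversal of length $\asymp e^{-\chi_1 n}$, and, after transporting the exponential separation hypothesis to the slow direction, distinct length-$n$ words give $e^{-cn}$-separated sub-intervals for a fixed $c>0$. One then runs Hochman's inverse entropy theorem \cite{hochman_dimension_2017} on this essentially one-dimensional but non-self-similar presentation, in the spirit of Hochman--Solomyak for $\SL_2(\R)$ and of B\'ar\'any--Hochman--Rapaport for planar self-affine measures: a dimension drop $\dim(\pi_*\mu)<\min(1,\hFur/\chi_1)$ would force, at all but a density-zero set of dyadic scales, a near-coincidence of the cylinders of many pairs of distinct words; exponential separation rules this out at the level of cylinders, so the only remaining possibility is a limiting degeneracy carried by a proper real-algebraic subvariety of $\P(\R^3)$ invariant under the action, which is impossible because Zariski density makes the action strongly irreducible and proximal (so $\supp\mu$ lies in no proper subvariety). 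This yields $\delta_1=\min(1,\hFur/\chi_1)$; then $\delta_2=(\hFur-\chi_1\delta_1)/\chi_2\leq 1$ is checked directly against \eqref{eqn: Lyapunov dimension}, and $\dim\mu=\delta_1+\delta_2=\dim_{\rm{LY}}\mu$.

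The heart of the matter, and the main obstacle, is this last step. Hochman's inverse theorem is designed for affine iterated function systems on Euclidean space; importing it here requires linearising the projective action on each cylinder with uniformly bounded distortion, so that the ellipse-shaped cylinders behave like honest affine images and the scale-by-scale entropy analysis survives, and it requires showing that the exponential separation of the full $\P(\R^3)$-cylinders descends to the slow transversal for enough pairs of words. On top of this, the two distinct contraction rates $\chi_1<\chi_2$ force one to isolate the slow marginal $\pi_*\mu$ precisely enough to reduce to a one-dimensional problem and then thread the inverse theorem through the slow/fast splitting without losing sharp constants --- it is also this non-conformality that makes the Ledrappier--Young formula of the second step substantial, in contrast to the conformal $\SL_2(\R)$ case.
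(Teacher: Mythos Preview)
This theorem is not proved in the present paper: it is quoted verbatim from the companion article \cite{LPX} (where it appears as Theorem~1.10), and is used here only as a black box in the applications of \cref{sec:limitset} and \cref{sec.low rauzy}. There is therefore no proof in this paper to compare your proposal against.

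That said, your outline is a faithful summary of the strategy one expects \cite{LPX} to follow, and which is standard in this line of work (Hochman--Solomyak for $\SL_2(\R)$, B\'ar\'any--Hochman--Rapaport for self-affine measures): exact dimensionality from \cite{rapaport_exact_2021,ledrappier_exact_2021}; a Ledrappier--Young decomposition $\dim\mu=\delta_1+\delta_2$ with $h_{\mr F}=\chi_1\delta_1+\chi_2\delta_2$, reducing the problem to $\delta_1=\min(1,h_{\mr F}/\chi_1)$; and an adaptation of Hochman's inverse theorem to the projected one-dimensional measure, with exponential separation and Zariski density ruling out the degenerate alternatives. You have also correctly identified the genuine difficulties --- linearising the projective action with bounded distortion, transporting exponential separation to the slow transversal, and handling the two distinct Lyapunov exponents --- which is precisely what makes the $\SL_3$ case substantially harder than $\SL_2$. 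If you want to see the details worked out, you must consult \cite{LPX} directly.
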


As a consequence, we can derive the Hausdorff dimension of limit sets, as shown in \cite{LPX}.
\begin{thm}[{\cite[Theorem 1.3]{LPX}}]
	Let $\Gamma$ be a hyperbolic group and $\rho:\Gamma\rightarrow\SL_3(\R)$ be an irreducible Anosov representation, then the Hausdorff dimension of the limit set $L(\rho(\Gamma))$ in $\P(\R^3)$ equals the affinity exponent $s_{\mathrm{A}}(\rho)$.
\end{thm}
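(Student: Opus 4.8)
The plan is to prove the theorem by assembling the chain of inequalities announced in the introduction, with the new input being \cref{thm: variational principle}. It suffices to establish the two bounds $\dimH L(\rho(\Gamma))\leq s_{\mathrm{A}}(\rho)$ and $\dimH L(\rho(\Gamma))\geq s_{\mathrm{A}}(\rho)$. The first holds for every Borel Anosov representation and is step (1), due to Pozzetti--Sambarino--Wienhard \cite{pozzetti_anosov_nodate}, so all the content lies in the reverse inequality; I would split its proof according to whether $\rho$ is Zariski dense.

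Suppose first that $\rho$ is Zariski dense, and fix $\ve>0$. By \cref{thm: variational principle} there is a finitely supported probability measure $\nu$ on $\rho(\Gamma)$ whose support generates a Zariski dense subgroup, with unique stationary measure $\mu$ on $\P(\R^3)$ satisfying $\dim_{\mr{LY}}\mu\geq s_{\mathrm{A}}(\rho)-\ve$. I then feed $\mu$ into \cref{thm:lyapunov}, whose hypotheses are Zariski density of $\nu$ (true by construction) and the exponential separation condition. The latter is automatic here: an Anosov representation is a quasi-isometric embedding, so $\rho(\Gamma)$ is a discrete subgroup of $\SL_3(\R)$, hence distinct elements of $\rho(\Gamma)$ are uniformly separated in a left-invariant metric and a fortiori all $n$-fold products of elements of $\supp\nu$ are uniformly separated, for every $n$. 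Thus \cref{thm:lyapunov} yields $\dim\mu=\dim_{\mr{LY}}\mu\geq s_{\mathrm{A}}(\rho)-\ve$. Moreover $\mu$ is the Furstenberg measure of a Zariski dense (hence proximal and strongly irreducible) measure, so $\supp\mu$ is the closure of the attracting fixed points of the semigroup generated by $\supp\nu\subseteq\rho(\Gamma)$, and these are attracting fixed points of elements of $\rho(\Gamma)$; therefore $\supp\mu\subseteq L(\rho(\Gamma))$. By Young's bound \cite{young_dimension_1982}, any Borel set of full $\mu$-measure, in particular $L(\rho(\Gamma))$, has Hausdorff dimension at least $\dim\mu$. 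Letting $\ve\to 0$ gives $\dimH L(\rho(\Gamma))\geq s_{\mathrm{A}}(\rho)$, which together with step (1) is the asserted equality.

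It remains to handle an irreducible Anosov $\rho$ whose image is not Zariski dense. Its Zariski closure $H$ acts irreducibly on $\R^3$, hence is reductive, and is non-compact since the image of an Anosov representation is unbounded. A classification of proper reductive subgroups of $\SL_3(\R)$ acting irreducibly on $\R^3$ shows that, apart from degenerate cases where $\Gamma$ is elementary (then $L(\rho(\Gamma))$ is finite and $s_{\mathrm{A}}(\rho)=0$, so there is nothing to prove), $\rho$ factors through the irreducible embedding $\mathrm{Sym}^2\colon\PSL_2(\R)\hookrightarrow\SL_3(\R)$, with $\Gamma\to\PSL_2(\R)$ a convex cocompact representation. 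Then $L(\rho(\Gamma))$ is the image of the Fuchsian limit set $\Lambda\subseteq\P(\R^2)$ under the conic (Veronese) embedding $\P(\R^2)\hookrightarrow\P(\R^3)$, which is bi-Lipschitz onto its image, so $\dimH L(\rho(\Gamma))=\dimH\Lambda=\delta(\Gamma)$ by Patterson--Sullivan theory. On the other hand, writing $\rho(\gamma)=\mathrm{Sym}^2(h_\gamma)$, one computes for $0\leq s\leq 1$ that $\psi_s(\rho(\gamma))=s\bigl(\log\sigma_1(\rho(\gamma))-\log\sigma_2(\rho(\gamma))\bigr)=s\,d_{\HH^2}(o,\gamma o)$, so the series defining $s_{\mathrm{A}}(\rho)$ is the Poincaré series of $\Gamma$ and $s_{\mathrm{A}}(\rho)=\delta(\Gamma)$; hence equality holds in this case too.

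As for the main obstacle: within this proof nothing is hard: it is a short assembly of \cref{thm: variational principle} of the present paper together with \cref{thm:lyapunov}, \cite{young_dimension_1982} and \cite{pozzetti_anosov_nodate}. The genuine work sits upstream, in the proof of \cref{thm: variational principle} itself, namely the geometric-group-theoretic construction of free sub-semigroups of $\rho(\Gamma)$ with controlled growth carried out in \cref{sec: geometric group thry,sec:limitset}. The only points in the present argument that require a little care are verifying that the stationary measure from \cref{thm: variational principle} is carried by $L(\rho(\Gamma))$, that the exponential separation hypothesis of \cref{thm:lyapunov} comes for free from discreteness of the image, and the soft classification needed to dispose of the non-Zariski-dense representations.
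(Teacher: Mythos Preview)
Your proposal is correct and, for the Zariski dense case, follows exactly the chain of inequalities (1)--(4) laid out in the introduction: the upper bound of \cite{pozzetti_anosov_nodate}, the variational principle \cref{thm: variational principle}, the dimension formula \cref{thm:lyapunov} (with exponential separation granted by discreteness of the Anosov image), and Young's bound \cite{young_dimension_1982}. This is precisely the assembly the paper has in mind; the theorem itself is quoted from \cite{LPX}, and the present paper supplies the missing ingredient \cref{thm: variational principle}.

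Your additional treatment of the irreducible but non-Zariski-dense case is not spelled out in this paper, since \cref{thm: variational principle} and \cref{thm:lyapunov} both assume Zariski density and the full statement is deferred to \cite{LPX}. Your reduction---classifying proper irreducible reductive subgroups of $\SL_3(\R)$ to land in the image of $\mathrm{Sym}^2\colon\PSL_2(\R)\hookrightarrow\SL_3(\R)$, then reading off both $\dimH L(\rho(\Gamma))$ and $s_{\mathrm A}(\rho)$ as the Fuchsian critical exponent---is the natural way to close that gap and is correct as stated.
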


In a similar vein of Theorem \ref{thm: variational principle}, we also obtain a variational principle on the flag variety, Proposition \ref{prop: flag variation}. An application is to obtain an estimation of the Hausdorff dimension of limit sets of Borel Anosov representations on the flag variety by Ledrappier-Lessa \cite{LedLe232} .

\paragraph{Rauzy gasket}
Another example we consider is the \textit{Rauzy gasket} which is a fractal set on $\PP(\RR^3)$ formed by projective actions of $\SL_3(\RR)$.
We also establish the identity between its Hausdorff dimension and its affinity exponent: we show the affinity exponent is an upper bound of its Hausdorff dimension and a variational principle of the affinity exponent.

Let $\Delta$ be the projectivization of $\{(x,y,z): x,y,z\geq 0\}$ in $\PP(\RR^3).$
Let $\Gamma_{\mathscr{R}}$ be the semigroup generated by 
    \[A_1=\begin{pmatrix} 1 & 1 & 1 \\ 0 & 1 & 0\\ 0& 0& 1\end{pmatrix},\  A_2=\begin{pmatrix} 1 & 0 & 0 \\ 1 & 1 & 1\\ 0& 0& 1\end{pmatrix},\  A_3=\begin{pmatrix} 1 & 0 & 0 \\ 0 & 1 & 0\\ 1& 1& 1\end{pmatrix},\] 
    and we call it the Rauzy semigroup.
    Then as $\Gamma_{\mathscr{R}}\subset\SL_3(\R)$, the semigroup $\Gamma_{\mathscr{R}}$ acts on $\P(\R^3)$. Due to the choice of $\Delta$, the semigroup $\Gamma_{\mathscr{R}}$ preserves $\Delta$. The Rauzy gasket $X$ is the unique attractor of the Rauzy semigroup, which can be defined formally as
    \[\bigcap_{n\rightarrow\infty}\bigcup_{i_j\in\{1,2,3\}} (A_{i_1}\cdots A_{i_{n}}\Delta). \]

The Rauzy gasket, depicted in Figure 1, was first introduced in 1991 by Arnoux and Rauzy \cite{arnoux_geometric_1991} in the context of interval exchange transformations. They conjectured that the gasket has Lebesgue measure 0. Levitt \cite{levitt1993dynamique} rediscovered the gasket in 1993 and confirmed the Arnoux-Rauzy conjecture, the proof of which is essentially due to Yoccoz. Later, in the study of Novikov's problem, Dynnikov and De Leo \cite{deleo2009geometry} provided a numerical estimate of the Hausdorff dimension of the Rauzy gasket. They suggested lower and upper bounds are $1.7$ and $1.8$, respectively. Meanwhile, Arnoux asked whether the Hausdorff dimension is less than or equal to $2$ \cite{arnoux_rauzy_2013}. Avila, Hubert, and Skripchenko \cite{avila_hausdorff_2016} provided a positive answer to this question, and Guti\'errez-Romo and Matheus in \cite{gutierrez-romo_lower_2020} proved the lower bound is greater than $1.19$.
Recently, Pollicott and Sewell \cite{pollicott_upper_2021} used a renewal theoretical argument to show that the Hausdorff dimension of the Rauzy gasket is less than $1.7407$. See also \cite{fougeron2020dynamical} for a weaker upper bound.
\begin{figure}[!ht]
\centering
\includegraphics[width=0.43\textwidth]{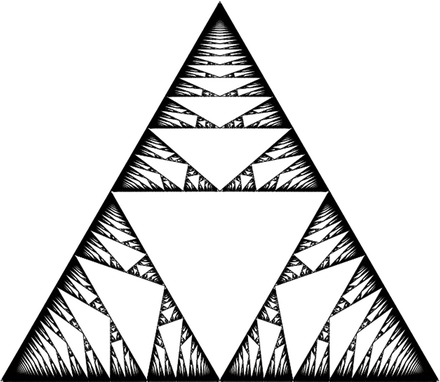}
\caption{Figure in \cite{arnoux_rauzy_2013}}
\end{figure}

Let $\rho$ be the natural embedding of Rauzy semigroup $\Gamma_{\mathscr{R}}$ into $\SL_3(\RR).$
The affinity exponent $s_{\mathrm{A}}(\Gamma_{\mathscr{R}})$ of the Rauzy gasket is defined as the critical exponent of \eqref{eqn: critical exponent}, which also works for the Rauzy semigroup.
The following theorem confirms a folk-lore conjecture of the Hausdorff dimension of the Rauzy gasket. 

\begin{thm}\label{thm:rauzy}
The Hausdorff dimension of the Rauzy gasket $X$ is equal to its affinity exponent $s_{\mathrm{A}}(\Gamma_{\mathscr{R}}).$ 
\end{thm}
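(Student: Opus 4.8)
The plan is to sandwich the Hausdorff dimension $\dimH X$ of the Rauzy gasket between the affinity exponent $s_{\mathrm A}(\Gamma_{\mathscr R})$ from both sides, following the same four-step scheme as for Borel Anosov representations. For the upper bound $\dimH X \leq s_{\mathrm A}(\Gamma_{\mathscr R})$, I would adapt the Falconer/Pozzetti–Sambarino–Wienhard covering argument: the images $A_{i_1}\cdots A_{i_n}\Delta$ form a natural covering of $X$, and because the Rauzy semigroup is Anosov-like (the singular value gaps $\sigma_1/\sigma_2$ and $\sigma_2/\sigma_3$ grow along any infinite word — this is where one uses the strict positivity / cone-contraction properties of the $A_i$ acting on $\Delta$), each such piece is comparable to an ellipse whose axes are governed by the singular values of the corresponding product, so the $s$-dimensional affinity sum controls the Hausdorff measure. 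This yields $\dimH X \leq s_{\mathrm A}(\Gamma_{\mathscr R})$.

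For the reverse inequality I would invoke the chain $\dimH X \geq \sup_\mu \dim\mu = \sup_\mu \dim_{\mathrm{LY}}\mu \geq s_{\mathrm A}(\Gamma_{\mathscr R}) - \varepsilon$. The middle equality is Theorem \ref{thm:lyapunov} applied to Furstenberg measures on $\P(\R^3)$: given a finitely supported $\nu$ on $\Gamma_{\mathscr R}$ that is Zariski dense and exponentially separated, its stationary measure $\mu$ is exact dimensional with $\dim\mu=\dim_{\mathrm{LY}}\mu$, and such $\mu$ is supported on $X$ since $\Gamma_{\mathscr R}$ preserves $\Delta$ and $X$ is its attractor. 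So the real work is a variational principle for the Rauzy semigroup: for every $\varepsilon>0$ produce a finitely supported $\nu$ on $\Gamma_{\mathscr R}$, supported on a set generating a Zariski dense sub-semigroup and satisfying exponential separation, with $\dim_{\mathrm{LY}}\mu \geq s_{\mathrm A}(\Gamma_{\mathscr R})-\varepsilon$. As in Theorem \ref{thm: variational principle}, this reduces to finding, inside $\Gamma_{\mathscr R}$, a large finite subset $F$ that freely generates a free semigroup (so that the Furstenberg entropy of the uniform walk on $F$ equals $\log\#F$, the random walk entropy) while the Lyapunov exponents $\chi_i$ of the walk remain close to the ``right'' slopes dictated by $\psi_s$; one then lets $\#F\to\infty$ along a sequence of scales so that $\big(h_{\mathrm F}-(\chi_1+\cdots+\chi_d)\big)/\chi_{d+1}+d \to s_{\mathrm A}$.

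The construction of these free sub-semigroups is where the Rauzy case needs its own argument rather than a black-box appeal: $\Gamma_{\mathscr R}$ is already a free semigroup on $A_1,A_2,A_3$, but that is far too small — $\log 3$ over the relevant Lyapunov exponent will not reach $s_{\mathrm A}$. Instead I would take $F$ to be a carefully chosen set of words of length roughly $n$ in the $A_i$'s: the number of such words grows like $3^n$, but one must discard those with bad singular-value behaviour and retain a subset of near-maximal cardinality on which (i) the semigroup freeness is preserved — this follows from ping-pong on nested sub-simplices of $\Delta$, using that distinct long words send $\Delta$ into disjoint pieces — and (ii) the empirical distribution of the Cartan projections $\log\sigma(\,\cdot\,)$ of the chosen words concentrates near the maximizer of the pressure defining $s_{\mathrm A}$. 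Step (ii) is essentially a large-deviations / thermodynamic-formalism statement for the locally constant cocycle $g\mapsto \log\sigma(g)$ over the full shift on three symbols, and the existence of an equilibrium state realizing the supremum in \eqref{eqn: critical exponent} is what makes it work. Zariski density of the chosen $F$ can be arranged because $\Gamma_{\mathscr R}$ itself is Zariski dense in $\SL_3(\R)$ (the $A_i$ generate a Zariski dense semigroup) and Zariski density is preserved when passing to a sufficiently rich finite subset; exponential separation is automatic for a finite set of distinct matrices in $\SL_3(\R)$ after perhaps a harmless perturbation, or here follows directly from the contraction estimates. The main obstacle, as in the Anosov setting, is executing (i) and (ii) simultaneously — keeping the subset both free (a ping-pong / geometric condition) and entropy-efficient with the correct Lyapunov slopes (a dynamical condition) — and this is exactly the geometric argument of \cref{sec: geometric group thry,sec:limitset}, specialized to the self-similar structure of the Rauzy gasket.
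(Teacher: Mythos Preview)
Your overall scheme matches the paper's, but both halves have real gaps that the paper has to work to close, and your sketch slides past them.

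\textbf{Upper bound.} The claim that the Rauzy semigroup is ``Anosov-like'' with uniformly growing singular-value gaps is exactly what fails. The generators $A_i$ are unipotent: each fixes a vertex of $\Delta$, and for words like $A_i^n$ the image $A_i^n\Delta$ is a thin triangle touching that vertex whose area is $\asymp n^{-2}$, not $\asymp\sigma_1^{-3}\asymp n^{-3}$. So the correspondence ``shape of $\gamma\Delta$ $\leftrightarrow$ singular values of $\gamma$'' that underlies the Falconer/PSW covering argument breaks down precisely for words ending in a long block of a single letter, and you cannot bound the covering cost of $\gamma\Delta$ by $\varphi_s(\gamma)$ uniformly. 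The paper's fix is the content of Section~\ref{sec:upper rauzy}: one proves (via the \emph{transpose} action of $\Gamma_{\mathscr R}$ on $\Delta$, Lemma~\ref{lem: crucial proj geo lem}) that the needed estimates $\diam\Delta_\gamma\lesssim\sigma_2/\sigma_1$ and $\mathrm{Area}(\Delta_\gamma)\lesssim\sigma_1^{-3}$ do hold once the last two digits of $\gamma$ differ, then restricts to such $\gamma$ and checks that they still give a Vitali cover of $X$ minus a countable set. There is no cone-contraction shortcut here.

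\textbf{Lower bound.} The architecture (variational principle $+$ Theorem~\ref{thm:lyapunov}) is right, but two of your mechanisms don't survive the non-uniform hyperbolicity. First, selecting ``words of length roughly $n$'' does not work: because of the unipotent pieces, word length does not control the Cartan projection (some length-$n$ words have $\log\sigma_1\asymp\log n$, others $\asymp n$), so you cannot simultaneously pin down the Lyapunov data and keep enough words. The paper abandons word length entirely and builds the approximating set by slicing in $\kappa(\gamma)$ directly (Lemma~\ref{lem:s nk}), then uses a multiplicity bound (Lemma~\ref{lem:multiplicity j}, again relying on Lemma~\ref{lem: crucial proj geo lem}) to show that passing to \emph{minimal} elements---those not prefixed by another chosen word---costs only $e^{O(\beta n)}$ elements. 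This prefix argument, not ping-pong, is what gives freeness; since $\Gamma_{\mathscr R}$ is already a free semigroup, the issue is never whether products collide but whether the chosen words form an antichain for the prefix order. Second, exponential separation cannot be arranged by ``harmless perturbation'' (you must stay inside $\Gamma_{\mathscr R}$); it holds because $\Gamma_{\mathscr R}\subset\SL_3(\ZZ)$ is discrete. Finally, the step $h_{\mathrm F}(\mu,\nu)=h_{\mathrm{RW}}(\nu)$ is not automatic from freeness: the paper proves it separately for the Rauzy semigroup by showing the fibers of $\cF(\RR^3)\to\PP(\RR^3)$ are $\mu_\cF$-a.e.\ singletons over $X_{\mathrm{nice}}$, using that nice points are uniquely coded. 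Your appeal to ``the geometric argument of Sections~\ref{sec: geometric group thry}--\ref{sec:limitset}'' is misplaced---those sections treat the uniformly hyperbolic Anosov case, and the paper explicitly replaces that argument here by a Morris--Sert-type construction adapted to the lack of a word-length/Cartan-projection comparison.
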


The result and the proof of Theorem have an interesting outgrowth: we 
improve the numerical lower bound obtained in \cite{gutierrez-romo_lower_2020} to $3/2$, and the argument is versatile and allows us to deal with 
a (semi)group which contains a large reducible subsemigroup.

\begin{cor}\label{cor: 1.5}We have $\dim X\geq 3/2$.
\end{cor}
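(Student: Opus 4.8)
The plan is to apply the variational principle behind Theorem \ref{thm:rauzy} quantitatively: by that theorem (and the variational mechanism used to prove it), $\dim X = s_{\mathrm A}(\Gamma_{\mathscr R})$ can be approximated from below by Lyapunov dimensions $\dim_{\mathrm{LY}}\mu$ of Furstenberg measures of finitely supported measures $\nu$ on $\Gamma_{\mathscr R}$. Hence it suffices to exhibit, for every $\ve>0$, a single finitely supported $\nu$ supported on a suitable finite subset $S\subset\Gamma_{\mathscr R}$ with $\dim_{\mathrm{LY}}\mu_\nu \ge 3/2-\ve$; equivalently, to produce finite families of words in $A_1,A_2,A_3$ achieving affinity exponent arbitrarily close to $3/2$. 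Recalling formula \eqref{eqn: Lyapunov dimension}, for $\nu$ the uniform measure on a set $S$ that freely generates a free semigroup, the Furstenberg entropy equals the random walk entropy $\log\#S$, so $\dim_{\mathrm{LY}}\mu \ge s$ as soon as $\log\#S \ge \psi_s$ evaluated on the "typical" Lyapunov data, i.e. as soon as $\sum_{g\in S}\exp(-\psi_s(g)) \ge 1$ together with the free-semigroup condition. Thus the whole task reduces to a combinatorial/linear-algebra estimate on products of the $A_i$.

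The key steps, in order, are as follows. First, reduce to a sub-semigroup that is easy to analyze: as the corollary's discussion hints, one should isolate a large \emph{reducible} subsemigroup of $\Gamma_{\mathscr R}$ — for instance the subsemigroup generated by words fixing a common flag or preserving a coordinate subspace — on which the singular-value growth, and hence $\psi_s$, can be computed explicitly in terms of a $2\times 2$ (or triangular) block. Concretely, products like $A_i^{n}$ and mixed products $A_iA_j^{m}$ have controllable singular value gaps: $A_i$ is unipotent, so $\sigma_1(A_i^n)\sim n$, and careful bookkeeping of the three singular values of length-$\ell$ words gives the exponents $\log\sigma_1 - \log\sigma_2$ and $\log\sigma_1-\log\sigma_3$ up to $o(\ell)$ errors. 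Second, choose the generating set $S$ to consist of all words of some large length $\ell$ in a chosen sub-alphabet (ensuring the free-semigroup property via a ping-pong / positivity argument on $\Delta$ — positivity of all entries gives a clean ping-pong partition), and estimate $\#S$ against $\psi_s$ on these words. Third, optimize: compute $\lim_{\ell\to\infty}\tfrac1\ell\log\#S$ (a growth rate, bounded below since $\Gamma_{\mathscr R}$ has free subsemigroups of known rank) against $\lim_{\ell\to\infty}\tfrac1\ell\psi_s(w)$ for typical $w$, and check that the resulting inequality $\log\#S \ge \psi_s$ can be solved with $s$ up to $3/2$. Fourth, feed the winning $\nu$ into Theorem \ref{thm:rauzy}'s lower-bound machinery — the exact-dimensionality results (2) and the identity (3)–(4) in the chain of inequalities — to conclude $\dim X \ge \dim_{\mathrm{LY}}\mu_\nu \ge 3/2-\ve$, then let $\ve\to0$.

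The main obstacle I expect is the \emph{sharp} estimate of $\psi_s$ on products — i.e. controlling all three singular values (not just $\sigma_1/\sigma_3$) of long positive-matrix words with errors that vanish after normalizing by word length, while simultaneously keeping enough words to get logarithmic growth rate close to the entropy needed. There is a genuine tension: enlarging the alphabet increases $\#S$ but typically degrades the singular-value gaps (makes $\psi_s$ larger), so the $3/2$ threshold is exactly where these two effects balance, and getting the constant right — rather than some smaller number like the $1.19$ of \cite{gutierrez-romo_lower_2020} — requires either a clever choice of the reducible subsemigroup (so that $\sigma_2$ stays close to $\sigma_1$, making $\psi_s$ grow slowly in $s$) or an explicit transfer-operator/pressure computation. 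A secondary technical point is verifying the free-semigroup (or at least sufficient-freeness for the entropy lower bound $h_{\mathrm F}=\log\#S$) property for the chosen $S$; here the strict positivity of the $A_i$ and invariance of the simplex $\Delta$ should make a ping-pong argument routine, but it must be stated carefully so that the entropy computation in \eqref{equ:f entropy}–\eqref{eqn: random walk entropy} goes through.
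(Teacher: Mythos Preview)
Your plan leaves the decisive computation open --- you flag it yourself as ``the main obstacle'' --- so as written this is an outline rather than a proof. The paper's argument is far more direct than the measure-construction route you propose: since Theorem~\ref{thm:rauzy} already gives $\dim X = s_{\mathrm A}(\Gamma_{\mathscr R})$, one only needs to show the Poincar\'e series diverges at $s=3/2$, and no stationary measures need to be built at all.

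The paper does this via the subsemigroup $\Gamma_0=\langle A_1,A_2\rangle$, but the mechanism is not the entropy-versus-$\psi_s$ balancing you sketch. The two concrete facts are: (i) $\Gamma_0$ preserves the plane $\mathrm{span}(e_1,e_2)$ with determinant~$1$ on it, and via the area/diameter estimates of Lemmas~\ref{lem: crucial proj geo lem}--\ref{lem: proj geo lem} this forces $\sigma_2(\gamma)\geq\epsilon$ (a uniform constant, not merely ``close to $\sigma_1$'') for $\gamma\in\Gamma_0$ whose last two digits differ; (ii) for such $\gamma$, letting $I$ be the arc from $E_1$ to $E_2$, one has $|\gamma I|\gtrsim 1/\sigma_1(\gamma)^2$. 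Since $\sigma_1\sigma_2\sigma_3=1$, one computes
\[
\varphi_{3/2}(\gamma)=\frac{\sigma_2}{\sigma_1}\Bigl(\frac{\sigma_3}{\sigma_1}\Bigr)^{1/2}=\frac{\sigma_2(\gamma)^{1/2}}{\sigma_1(\gamma)^2}\gtrsim |\gamma I|.
\]
The arcs $\{\gamma I:\gamma\in\Gamma_0,\ \text{last two digits differ}\}$ cover the nice points of $I$ infinitely many times (this is the ``critical exponent $1$'' fact for the $2$-dimensional action), so $\sum|\gamma I|=\infty$, hence $P_\Gamma(3/2)=\infty$ and $s_{\mathrm A}\geq 3/2$. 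The exponent $3/2$ appears precisely because $\sigma_2\asymp 1$ collapses $\varphi_s$ to $\sigma_1^{-2}$ exactly at $s=3/2$; your proposal never isolates this identity, which is why the sharp constant stays out of reach in your outline.
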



Unlike Anosov representations, the Rauzy semigroup is non-uniformly hyperbolic. The estimate of the upper bound of the Hausdorff dimension requires the study of the points where the action lacks hyperbolicity. 
To obtain a lower bound of the Hausdorff dimension, we also establish a variational principle for the Rauzy semigroup. We build on the variational principle for IFS setting \cite{morris_variational_2023}. Like Anosov representations, we aim to find free semigroups. In this process, the difficulty lies in that non-uniform hyperbolicity makes us lose control of word lengths. We make use of the \textit{prefix argument} to overcome the issue. This prefix argument also occurs in \cite{HJX}.

\begin{rem} Recently
 Natalia Jurga also obtained \cref{thm:rauzy} independently.
\end{rem}

\paragraph{Organization.} In Section \ref{sec:entropy}, we discuss different notions of entropy. We establish a geometric group theoretic lemma in Section \ref{sec: geometric group thry} and the variational principle for Anosov representations in Section \ref{sec:limitset}. Section \ref{sec:rauzy} is devoted to the study of the Hausdorff dimension of the Rauzy gasket. 

\paragraph{Acknowledgement.}
We would like to thank Wenyuan Yang for carefully explaining the basic ideas and arguments in \cite{Yang19}, which is useful for \cref{sec: geometric group thry}.
We would like to thank Cagri Sert, Fran\c cois Ledrappier, and Pablo Lessa for helpful discussions. Part of the work was done in the conference  ``Beyond uniform hyperbolicity'' at the Banach Center in B\k{e}dlewo, Poland, in 2023. We thank the organizers and the hospitality of the center.

\section{Preliminaries}\label{sec:entropy}

\subsection{Actions and random walks of \texorpdfstring{$\SL_n(\RR)$}{SL\_n(R)}}\label{subsec: actions}

Consider the $n$-dimensional Euclidean space $\R^n$ and denote by $\|\cdot\|$ the Euclidean norm. By abuse of the notation, we denote by $\|\cdot\|$ the norm on $\wedge^2\R^n$ induced by the one in $\R^n$. 
Let $\PP(\RR^n)$ be the projective space. The metric on $\PP(\RR^n)$ is given by
\[d(\R v,\R w):=\frac{\|v\wedge w\|}{\|v\|\|w\|}\,\,\,\text{for any}\,\,\, \R v,\R w\in \P(\R^n),\]
which is bi-Lipschitz equivalent to the $\SO(n)$-invariant metric on $\PP(\RR^n).$

Let $\frak a=\lb{\lambda=\diag(\lambda_1,\cdots,\lambda_n):\lambda_i\in\RR,\sum_i\lambda_i=0}$ be a Cartan algebra of $\fsl_n(\RR)$ and $\frak a^+=\lb{\lambda\in\frak a:\lambda_1\geqslant\cdots\geqslant\lambda_n}$ be a positive Weyl chamber. Set $A^+=\exp\frak a^+$ and $K=\SO_n(\RR).$

For every $g\in \SL_n(\RR),$ it admits the Cartan decomposition $g=\wt k_ga_gk_g\in KA^+K.$ 
Here, $a_g=\diag(\sigma_1(g),\cdots,\sigma_n(g))$ where $\sigma_1(g)\geqslant\cdots\geqslant\sigma_n(g)$ are singular values of $g$ as the notation before.
The Cartan projection of $g$ is defined to be $\kappa(g)\defeq \diag(\log\sigma_1(g),\cdots\log\sigma_n(g))\in \frak a^+.$
Let $e_1,\cdots,e_n$ be the standard orthonormal basis of $\R^n$ and $E_i=\RR e_i\in\PP(\RR^n)$ be the corresponding point in the projective space.
We consider the following notions as in our first paper \cite{LPX}:
\begin{itemize}
	\item  $V_g^+\defeq\wt k_gE_1 \in \P(\R^n)$, which is an attracting point of $g$;
    \item $H_g^-\defeq  k_g^{-1}(E_2\oplus\cdots\oplus E_n)\subset \P(\R^d)$, which is a repelling hyperplane of $g$
 
 	(if $\sigma_1>\sigma_2$, then $V_g^+$ and $H_g^-$ are uniquely defined);

	\item $b(g^-,\epsilon)\defeq\{x\in\P(\R^n):d(x,H_g^-)>\epsilon \}$ for any $\epsilon>0$;

	\item $B(g^+,\epsilon)\defeq\{x\in\P(\R^n): d(x,V_g^+)\leq \epsilon \}$ for any $\epsilon>0$.
\end{itemize}

We have the following useful lemma \cite[Lemma 14.2]{benoist_random_2016} for later use.
\begin{lem}\label{lem:gv d v g-}
For any $g\in \SL_n(\R)$ and $V=\R v\in \P(\R^n)$, we have
\begin{align*}
  d(V, H_g^-)\leq \frac{\|gv\|}{\|g\|\|v\|}\leq  d(V, H_g^-)+\frac{\sigma_2(g)}{\sigma_1(g)} ,\quad
 d(gV,V_g^+)d(V, H_g^-)\leq \frac{\sigma_2(g)}{\sigma_1(g)}.
\end{align*}
\end{lem}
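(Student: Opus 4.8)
The plan is to reduce everything to the diagonal case via the Cartan decomposition $g=\wt k_g a_g k_g$. Since $K=\SO_n(\RR)$ acts on $\PP(\RR^n)$ by isometries and preserves the Euclidean norm, and since $\|g\|=\sigma_1(g)$, setting $v'=k_g v$ gives $\|gv\|=\|a_g v'\|$, $\|v\|=\|v'\|$, while $d(V,H_g^-)=d(\RR v',\PP(E_2\oplus\cdots\oplus E_n))$ (apply $k_g$) and $d(gV,V_g^+)=d(\RR(a_g v'),E_1)$ (apply $\wt k_g^{-1}$). Hence it suffices to prove the three inequalities with $g$ replaced by $a_g$, the repelling hyperplane by $e_1^\perp=E_2\oplus\cdots\oplus E_n$, and the attracting point by $E_1$.

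Next I would record two coordinate formulas for the metric $d(\RR v,\RR w)=\|v\wedge w\|/(\|v\|\|w\|)$: for $0\neq w=\sum_i w_i e_i$,
\[ d(\RR w,E_1)=\frac{\big(\sum_{i\geq 2}w_i^2\big)^{1/2}}{\|w\|},\qquad d\big(\RR w,\PP(e_1^\perp)\big)=\frac{|w_1|}{\|w\|}. \]
The first is immediate from $w\wedge e_1=\sum_{i\geq 2}w_i\,e_i\wedge e_1$ and orthonormality of the $e_i\wedge e_1$. For the second, write $w=w_1 e_1+u$ with $u\in e_1^\perp$; for any $0\neq u'\in e_1^\perp$ the wedge $w\wedge u'=w_1\,e_1\wedge u'+u\wedge u'$ is an orthogonal sum of a term involving $e_1$ and one not, so $\|w\wedge u'\|\geq |w_1|\,\|u'\|$, with equality at $u'=u$; this identifies the minimum.

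Finally, write $v'=(v_1',\dots,v_n')$ and use $\|a_g v'\|^2=\sum_i\sigma_i^2 v_i'^2$ together with $\sigma_1\geq\sigma_i$ for all $i$ and $\sigma_2\geq\sigma_i$ for $i\geq2$. Keeping only the $i=1$ term gives $\|a_g v'\|\geq\sigma_1|v_1'|$, hence the lower bound $\|gv\|/(\|g\|\|v\|)\geq |v_1'|/\|v'\|=d(V,H_g^-)$. For the upper bound, $\|a_g v'\|^2\leq\sigma_1^2 v_1'^2+\sigma_2^2\|v'\|^2$, so $\|a_g v'\|\leq\sigma_1|v_1'|+\sigma_2\|v'\|$, and dividing by $\sigma_1\|v'\|$ yields $d(V,H_g^-)+\sigma_2(g)/\sigma_1(g)$. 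For the product inequality, the first coordinate formula gives $d(\RR(a_gv'),E_1)=\big(\sum_{i\geq2}\sigma_i^2 v_i'^2\big)^{1/2}/\|a_g v'\|\leq \sigma_2\|v'\|/(\sigma_1|v_1'|)$, using $\|a_g v'\|\geq\sigma_1|v_1'|$; multiplying by $d(V,H_g^-)=|v_1'|/\|v'\|$ collapses everything to $\sigma_2(g)/\sigma_1(g)$, the case $v_1'=0$ being trivial since then $d(V,H_g^-)=0$.

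I do not expect a serious obstacle: the lemma is elementary once the reduction to $a_g$ is made. The only points needing care are the bookkeeping with the ordering of singular values, keeping straight the direction of the inequalities built into the definitions of $H_g^-$ and $b(g^-,\epsilon)$, and checking that the off-$e_1$ component of a wedge product genuinely only increases its norm so that the distance to the hyperplane is realized as computed.
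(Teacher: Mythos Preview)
Your argument is correct. The paper does not give its own proof of this lemma; it simply cites \cite[Lemma~14.2]{benoist_random_2016}. Your reduction to the diagonal case via the Cartan decomposition, followed by the coordinate computations, is the standard elementary proof and matches what one finds in that reference.
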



Let $\nu$ be a finitely supported probability measure on $\SL_n(\RR).$
It induces random walks on the projective space and the flag variety. Recall the flag variety of $\RR^n$ as 
\[\cF(\RR^n)\defeq\lb{\,\xi=(\xi^1\sbs\xi^2\sbs \cdots \sbs \xi^i\sbs\cdots\sbs \xi^{n-1}): \xi^i\text{ is a linear subspace of }\RR^n \text{ of dimension }i\,}\]
and that $\SL_n(\R)$ acts on $\cF(\RR^n)$ canonically.

We denote $G_\nu$ to be the group generated by $\supp\nu.$
If we further assume that $G_\nu$ is Zariski dense in $\SL_n(\RR).$ Then the Lyapunov spectrum $\lambda(\nu)=\lb{\lambda_1(\nu)\geqslant\cdots \geqslant\lambda_n(\nu)}$ is simple. Moreover, the random walks induced by $\nu$ on the projective space $\PP(\RR^n)$ and the flag variety $\cF(\RR^n)$ both have a unique stationary measure (\cite{furstenberg_noncommuting_1963,guivarch_frontiere_1985,goldsheid1989lyapunov}, see also \cite[Chapter 10]{benoist_random_2016}).


\subsection{Different notions of entropies}
The Furstenberg entropy is  mysterious and might be difficult to compute. 
We recall another notion of the entropy associated to the random walk. The \textit{random walk entropy} of $\nu$ is 
            \begin{equation}\label{eqn: random walk entropy}
                h_{\mr{RW}}(\nu)=\lim_{k\to\infty}\frac{1}{k}H(\nu^{*k}).
            \end{equation}

\begin{rem}
In \cite{barany_hausdorff_2017} and \cite{rapaport_exact_2021}, the notion of the entropy they used is the following: for a discrete measure $\nu=\sum p_i\delta_{g_i}$, $H(\nu)\defeq H(p)=- \sum p_i\log p_i$. This notion works well in many settings of IFSs. For general semigroup actions, the random walk entropy $h_{\mr{RW}}(\nu)$ as in \cite{hochman_dimension_2017} is more precise. 
In particular, if $\supp\nu$ freely generates a free semigroup then $H(\nu)=h_{\mr{RW}}(\nu)$. 
This kind of entropy was first studied by Avez in \cite{Avez} to study the structure of the group action on the boundary. 

\end{rem}

To obtain a more calculable dimension formula, it is expected to show that the Furstenberg entropy in \cref{thm:lyapunov} is equal to the random walk entropy. In the following proposition we will see that for some concrete examples, we do have this equality. We say a representation $\rho:\Gamma\to\SL_n(\RR)$ is Zariski dense if $\rho(\Gamma)$ is Zariski dense.


\begin{prop}\label{prop:equal.entropy.geometry}
     Let $\Gamma$ be a hyperbolic group and $\rho:\Gamma\rightarrow\SL_n(\R)$ be a Zariski dense Borel Anosov representation. 
     Let $\nu$ be a finitely supported probability measure on $\rho(\Gamma)$ such that $G_\nu$ is Zariski dense. Then the unique $\nu$-stationary measure $\mu$ on $\P(\R^n)$ satisfies
    \[h_{\mathrm{F}}(\mu,\nu)=h_{\mr{RW}}(\nu).  \]
\end{prop}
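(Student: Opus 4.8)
The plan is to establish the equality $h_{\mathrm{F}}(\mu,\nu)=h_{\mathrm{RW}}(\nu)$ by the standard ``boundary entropy'' characterization: the Furstenberg entropy of a $\nu$-stationary measure on a $\mu$-boundary is always at most the random walk entropy, and equality holds precisely when the boundary map from the Poisson boundary is essentially injective (equivalently, when $(\P(\R^n),\mu)$ realizes the full Poisson boundary). So the inequality $h_{\mathrm{F}}(\mu,\nu)\leq h_{\mathrm{RW}}(\nu)$ is automatic (this is Kaimanovich--Vershik / Derriennic type theory, and can be cited). The real content is the reverse inequality, for which I would show that the Poisson boundary of $(\rho(\Gamma),\nu)$ is realized on $\P(\R^n)$ — or at least that the random-walk entropy is not lost when passing to $\P(\R^n)$.

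First I would use the Anosov property. Since $\rho$ is Borel Anosov and $\Gamma$ is hyperbolic, there is a $\rho$-equivariant continuous embedding $\zeta\colon\partial\Gamma\to\cF(\R^n)$ (the Anosov limit map), and composing with the projection $\cF(\R^n)\to\P(\R^n)$, $\xi\mapsto\xi^1$, gives an equivariant map $\zeta_1\colon\partial\Gamma\to\P(\R^n)$ which is injective because a Borel Anosov representation is in particular $1$-Anosov with transverse limit maps. Now for a finitely supported $\nu$ on $\rho(\Gamma)$ whose group $G_\nu$ is Zariski dense, the random walk on $\Gamma$ (pulled back via $\rho$, or rather on the free group mapping onto the relevant subgroup) converges almost surely to a boundary point, and the pushforward of the hitting measure on $\partial\Gamma$ under $\zeta_1$ is exactly the unique $\nu$-stationary measure $\mu$ on $\P(\R^n)$; this uses the uniqueness statement from \cref{subsec: actions} together with the fact that $\zeta_1$ is continuous, injective and equivariant, so $(\zeta_1)_*(\text{hitting measure})$ is $\nu$-stationary hence equals $\mu$. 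The key point is that $\zeta_1$ being \emph{injective} means no entropy is collapsed: the $\sigma$-algebra generated by $\zeta_1$ on $\partial\Gamma$ is the full Borel $\sigma$-algebra, so the Furstenberg entropy of $\mu$ equals the Furstenberg entropy of the hitting measure on $\partial\Gamma$.

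Second, I would invoke that for a random walk on a (non-elementary) hyperbolic group with finite first moment — automatic here since $\nu$ is finitely supported — the geometric boundary $\partial\Gamma$ with the hitting measure \emph{is} the Poisson boundary (Kaimanovich's strip criterion, or the ray/strip approximation theorem). Therefore the Furstenberg entropy of the hitting measure on $\partial\Gamma$ equals the Avez/random-walk entropy $h_{\mathrm{RW}}(\nu)$. Here one must be slightly careful: $\nu$ lives on $\rho(\Gamma)\cong\Gamma/\ker\rho$, and the random walk entropy $h_{\mathrm{RW}}(\nu)$ is that of the walk on the \emph{image} group; but since $\mu$ is $\nu$-stationary on $\P(\R^n)$ and the boundary map is injective and the image group acts with the same hyperbolic boundary dynamics (the limit set of $\rho(\Gamma)$ is the image of $\partial\Gamma$ or of $\partial(\rho(\Gamma))$), one checks the entropy of the walk driven by $\nu$ on $\rho(\Gamma)$ equals the Furstenberg entropy of its stationary measure on its own geometric boundary, which is identified with $\mu$. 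Chaining these: $h_{\mathrm{RW}}(\nu)=h_{\mathrm{F}}(\text{hitting measure on }\partial)=h_{\mathrm{F}}(\mu,\nu)$.

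The main obstacle, I expect, is the precise bookkeeping of boundaries: one is juggling $\partial\Gamma$, the geometric boundary of $\rho(\Gamma)$, the flag limit map, and its projection to $\P(\R^n)$, and one needs (i) injectivity of the projected limit map, which is exactly $1$-transversality of a Borel Anosov representation, and (ii) the identification of the Poisson boundary with the geometric boundary for the $\rho(\Gamma)$-walk, which requires $\rho(\Gamma)$ to be non-elementary hyperbolic (true, since $\Gamma$ is non-elementary hyperbolic and $\ker\rho$ is finite — a Borel Anosov $\rho$ has finite kernel, as the singular-value gap forces $|\rho(\gamma)|\to\infty$). Once those two facts are in place, the entropy identity is a formal consequence of the standard theory (the maximal-entropy characterization of the Poisson boundary, e.g. Kaimanovich--Vershik, together with Furstenberg's formula relating $h_{\mathrm{F}}$ to the Radon--Nikodym cocycle). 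I would also remark that $h_{\mathrm{F}}(\mu,\nu)\le h_{\mathrm{RW}}(\nu)$ always holds, so only the equality case needs the boundary identification, and this is where the hyperbolicity of $\Gamma$ is genuinely used — matching the paper's stated emphasis on geometric group theory.
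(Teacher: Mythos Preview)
Your proposal is correct and shares the core idea with the paper's proof: the Anosov limit map to $\PP(\RR^n)$ is injective, so passing from the Poisson boundary to $(\PP(\RR^n),\mu)$ loses no entropy. The difference lies in which model of the Poisson boundary is used as the intermediate step. You go through the Gromov boundary $\partial\Gamma$ (appealing to Kaimanovich's identification of the Poisson boundary for random walks on hyperbolic groups), then use injectivity of $\zeta_1\colon\partial\Gamma\to\PP(\RR^n)$. The paper instead goes through the flag variety: it quotes the Ledrappier--Kaimanovich result (via Furman's survey) that for a discrete Zariski dense $G_\nu\subset\SL_n(\RR)$ the Poisson boundary is $(\cF(\RR^n),\mu_\cF)$, giving $h_{\mathrm{RW}}(\nu)=h_{\mathrm F}(\mu_\cF,\nu)$, and then proves (\cref{lem.trivial.fibre}) that the projection $\pi\colon L_\cF(\rho(\Gamma))\to L(\rho(\Gamma))$ has trivial fibre, so that $(\cF,\mu_\cF)\to(\PP(\RR^n),\mu)$ is relatively measure-preserving and $h_{\mathrm F}(\mu_\cF,\nu)=h_{\mathrm F}(\mu,\nu)$.

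The trade-off: the paper's route avoids the issue you flag about $G_\nu$ possibly being a proper subgroup of $\rho(\Gamma)$, since the flag-variety Poisson boundary result only needs $G_\nu$ discrete and Zariski dense in $\SL_n(\RR)$, both of which are immediate. Your route is more directly geometric but, as you note, needs a version of Kaimanovich's theorem for $G_\nu$ acting on a hyperbolic space (rather than for a hyperbolic group proper), which is available but is exactly the ``bookkeeping'' you anticipate. Either way the substantive input from the Anosov hypothesis is the same injectivity statement.
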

\begin{rem}
    The Zariski density assumptions on $\rho$ and $G_\nu$ are both nonnecessary. It is shown in \cite{LedLe232} that for every $\nu$ supported on $\rho(\Gamma)$ with non-elementary $G_\nu$, the equality of entropies holds.
\end{rem}

To show Propostion \ref{prop:equal.entropy.geometry}, we may consider the random walk on the flag variety. 


\color{black}

\begin{prop} \label{prop.entropy.discrete}
Let $\nu$ be a probability measure on $\SL_n(\R)$ such that $G_\nu$ is a Zariski dense discrete subgroup. Let $\mu_\cF=\mu_{\cF}(\nu)$ and $\mu=\mu(\nu)$ be the unique $\nu$-stationary measure on $\cF(\RR^n)$ and $\P(\R^n)$ respectively. Then we have 
\[h_{\mr{RW}}(\nu)=h_{\mathrm{F}}(\mu_{\mathcal F},\nu)\geq h_{\mathrm{F}}(\mu,\nu).\]

\end{prop}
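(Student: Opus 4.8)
The plan is to establish the chain $h_{\mr{RW}}(\nu) \geq h_{\mathrm F}(\mu_{\mathcal F},\nu)$, then $h_{\mathrm F}(\mu_{\mathcal F},\nu) \geq h_{\mathrm F}(\mu,\nu)$, and finally the reverse inequality $h_{\mr{RW}}(\nu) \leq h_{\mathrm F}(\mu_{\mathcal F},\nu)$. The two easy pieces come first. The inequality $h_{\mr{RW}}(\nu) \geq h_{\mathrm F}(\mu_{\mathcal F},\nu)$ is the general Furstenberg-type bound: the Furstenberg entropy of any $\nu$-stationary measure on any $G_\nu$-space is at most the random walk entropy; this follows from the fact that $h_{\mathrm F}(\mu_{\mathcal F},\nu)$ is the asymptotic entropy of the sequence of pushed-forward measures, which is dominated by $\frac1k H(\nu^{*k})$, and one passes to the limit. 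The inequality $h_{\mathrm F}(\mu_{\mathcal F},\nu) \geq h_{\mathrm F}(\mu,\nu)$ is monotonicity of Furstenberg entropy under equivariant factor maps: the projection $\cF(\RR^n) \to \PP(\RR^n)$, $\xi \mapsto \xi^1$, is $\SL_n(\RR)$-equivariant and pushes $\mu_{\mathcal F}$ to $\mu$ (by uniqueness of the stationary measure on $\PP(\RR^n)$), and Furstenberg entropy can only decrease under factors, since the Radon–Nikodym cocycle on the factor is a conditional expectation of the one upstairs and $x\log x$ is convex (Jensen).

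The substantive part is $h_{\mr{RW}}(\nu) \leq h_{\mathrm F}(\mu_{\mathcal F},\nu)$, equivalently that the boundary $(\cF(\RR^n),\mu_{\mathcal F})$ is the Poisson boundary of the random walk $(G_\nu,\nu)$ — or at least that it carries the full asymptotic entropy. Here is where Zariski density and, crucially, discreteness of $G_\nu$ enter. The strategy is to use the entropy criterion for the Poisson boundary (Kaimanovich's strip/ray criterion, or the Furstenberg–Kaimanovich–Vershik formula $h_{\mr{RW}}(\nu) = h_{\mathrm F}(\text{Poisson bdry})$ combined with the identification of the Poisson boundary). Since $G_\nu$ is a discrete subgroup of $\SL_n(\RR)$, the contraction properties of the random walk on the flag variety (the sequence $\rho(g_1\cdots g_k)$ has a well-defined limiting flag $\mu_{\mathcal F}$-a.e., with singular value gaps growing at definite speed by Zariski density) give one a \emph{strip} or gauge estimate: one shows that the trajectory of the random walk is tracked, in the word/Cartan metric, by a geodesic ray converging to the limit flag, with the sublinear-tracking and the sub-exponential growth of the gauge needed for Kaimanovich's criterion. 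Concretely, using \Cref{lem:gv d v g-} and the simplicity of the Lyapunov spectrum one controls the relevant distances $d(g_1\cdots g_k \,\xi, V^+)$; discreteness converts these projective/metric contraction statements into a statement that the flag boundary separates the asymptotics of distinct trajectories finely enough to exhaust the entropy.

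I expect the main obstacle to be precisely this last step: verifying the hypotheses of the Poisson boundary entropy criterion (the strip criterion or a gauge/tempered-metric estimate) for a discrete Zariski-dense subgroup of $\SL_n(\RR)$ with a merely finitely supported — not necessarily symmetric or generating in the group-theoretic sense required by classical theorems — measure $\nu$. The cleanest route is probably to cite the identification of the Poisson boundary of discrete subgroups of semisimple Lie groups with the appropriate flag variety equipped with the Furstenberg measure (in the spirit of the results that underlie the Ledrappier–Lessa work cited in the excerpt as \cite{LedLe232}), and then invoke the Kaimanovich–Vershik / Derriennic entropy equality $h_{\mr{RW}}(\nu) = h_{\mathrm F}(\mu_{\mathcal F},\nu)$ for the Poisson boundary. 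Combining the three inequalities closes the loop and forces $h_{\mr{RW}}(\nu) = h_{\mathrm F}(\mu_{\mathcal F},\nu) \geq h_{\mathrm F}(\mu,\nu)$, which is the assertion.
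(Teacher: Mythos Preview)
Your proposal is correct and ultimately lands on the same approach as the paper: identify $(\cF(\RR^n),\mu_{\cF})$ with the Poisson boundary of $(G_\nu,\nu)$ and then invoke the Kaimanovich--Vershik/Derriennic equality $h_{\mr{RW}}(\nu)=h_{\mathrm F}(\text{Poisson boundary},\nu)$ for discrete groups. The paper does exactly this, citing \cite[Theorem 2.21]{furman_random_2002} for the Poisson boundary identification and \cite[Theorem 2.31]{furman_random_2002} for the entropy equality, and dispenses with the strip-criterion discussion entirely; your detour through Kaimanovich's strip/ray criterion and the worry about verifying its hypotheses is unnecessary since the cited results already cover the Zariski dense discrete case in full.
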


\begin{proof}
Proposition \ref{prop.entropy.discrete} follows from \cite[Theorem 2.31]{furman_random_2002} (originally in \cite[Theorem 3.2]{kaimanovich_random_1983}, \cite[Section 3.2]{ledrappier_poisson_1985}). In order to apply their result, we need to use \cite[Theorem 2.21]{furman_random_2002} (originally in \cite{kaimanovich_random_1983}, \cite{ledrappier_poisson_1985}) to obtain that $(\supp\mu_\cF,\mu_\cF)$ is the Poisson boundary of $(G_\nu,\nu)$. Then the Furstenberg entropy of the Poisson boundary is exactly the random walk entropy for discrete $G_\nu$ due to \cite[Theorem 2.31]{furman_random_2002}.
\end{proof}

\begin{proof}[Proof of \cref{prop:equal.entropy.geometry}]
By Proposition \ref{prop.entropy.discrete} and that the image of an Anosov representation is discrete, it remains to prove that $h_{\mathrm{F}}(\mu_\calF,\nu)=h_{\mathrm{F}}(\mu,\nu)$.

    Consider the canonical projection 
    \[\pi:\calF(\RR^n)\to \P(\R^n),~\xi=(\xi^1\sbs \xi^2\sbs\cdots\sbs\xi^{n-1})\mapsto \xi^1.\]
    Due to uniqueness of the Furstenberg measure on $\P(\R^n)$, we know that $\pi_*(\mu_\calF)=\mu$. By classical Rokhlin's disintegration  theorem, we can define a desintegration $\{\mu^\xi\}$ of the measure $\mu_\calF$ over $\mu$, where $\mu^\xi$ is a well-defined probabilty measure on $\pi^{-1}(\xi)$ for $
    \mu$ a.e. $\xi$.
    
    
    Let $L_\calF(\rho (\Gamma))$ and $L(\rho(\Gamma))$ be the limit sets on $\calF(\RR^n)$ and $\P(\R^n)$ (the closure of attracting fixed points of proximal elements) respectively. \cite[Lemma 9.5]{benoist_random_2016} tells us that $L_\calF(\rho (\Gamma))$ (resp. $L(\rho(\Gamma))$) is the unique $\rho(\Gamma)$-minimal closed invariant subset on $\calF(\RR^n)$ (resp. $\P(\R^n)$).
    Hence $\mu_\calF$ (resp. $\mu$) is supported on the limit set $L_\calF(\rho(\Gamma))$ (resp. $L(\rho(\Gamma))$).
    Before continuing, we need a lemma about the structure of the limit sets.
{\color{magenta} }
\begin{lem}\label{lem.trivial.fibre}
    Let $\rho:\Gamma\rightarrow \SL_n(\R)$ be a Borel Anosov representation. Then the canonical projection $\pi: L_\calF(\rho(\Gamma))\rightarrow L(\rho(\Gamma))$ has a trivial fibre.
\end{lem}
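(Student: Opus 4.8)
The plan is to show that for $\mu$-a.e.\ point $V \in L(\rho(\Gamma))$ the fibre $\pi^{-1}(V) \cap L_\calF(\rho(\Gamma))$ is a single point, by exploiting the Anosov property: the image $\rho(\Gamma)$ admits equivariant continuous limit maps $\xi^k \colon \partial\Gamma \to \mr{Gr}_k(\RR^n)$ whose images span the limit sets in each Grassmannian, and these maps assemble into a map $\partial\Gamma \to \cF(\RR^n)$ onto $L_\calF(\rho(\Gamma))$. The key point is transversality: for $x \neq y$ in $\partial\Gamma$, the flags $\xi(x)$ and $\xi(y)$ are in general position, and in particular $\xi^1(x) \neq \xi^1(y)$, so the composition $\partial\Gamma \to L_\calF(\rho(\Gamma)) \xrightarrow{\pi} L(\rho(\Gamma))$ is already a bijection $\partial\Gamma \to L(\rho(\Gamma))$. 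Since $L_\calF(\rho(\Gamma))$ is the continuous image of $\partial\Gamma$ under the full flag limit map, this forces $\pi$ restricted to $L_\calF(\rho(\Gamma))$ to be a continuous bijection, hence (by compactness of $\partial\Gamma$ and Hausdorffness of $\cF(\RR^n)$) a homeomorphism, which is exactly the statement that its fibres are trivial.

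Concretely, the steps I would carry out are: (1) Recall from the theory of Anosov representations (Labourie, Guichard–Wienhard) that a Borel Anosov $\rho$ has, for each $1 \le k \le n-1$, a continuous $\rho$-equivariant map $\xi^k \colon \partial\Gamma \to \mr{Gr}_k(\RR^n)$, and that these are \emph{compatible} ($\xi^k(x) \subset \xi^{k+1}(x)$) and \emph{transverse} ($\xi^k(x) \oplus \xi^{n-k}(y) = \RR^n$ for $x \neq y$); write $\xi = (\xi^1, \dots, \xi^{n-1}) \colon \partial\Gamma \to \cF(\RR^n)$. (2) Identify $L_\calF(\rho(\Gamma)) = \xi(\partial\Gamma)$ and $L(\rho(\Gamma)) = \xi^1(\partial\Gamma)$: the attracting fixed point of a proximal $\rho(\gamma)$ in $\cF(\RR^n)$ (resp.\ $\P(\RR^n)$) is $\xi(\gamma^+)$ (resp.\ $\xi^1(\gamma^+)$) where $\gamma^+ \in \partial\Gamma$ is the attracting fixed point of $\gamma$, and these are dense by definition of the limit set; since $\xi$ is continuous and $\partial\Gamma$ compact, the images are closed and coincide with the limit sets. (3) Observe $\pi \circ \xi = \xi^1$. (4) Show $\xi^1$ is injective: transversality with $k=1$ gives $\xi^1(x) \cap \xi^{n-1}(y) = 0$ for $x\neq y$, while $\xi^1(y) \subset \xi^{n-1}(y)$, so $\xi^1(x) \neq \xi^1(y)$. (5) Conclude: $\xi^1 \colon \partial\Gamma \to L(\rho(\Gamma))$ is a continuous bijection from a compact space to a Hausdorff space, hence a homeomorphism; so is $\xi$; therefore $\pi|_{L_\calF(\rho(\Gamma))} = \xi \circ (\xi^1)^{-1}$ restricted appropriately is a homeomorphism, i.e.\ $\pi^{-1}(V) \cap L_\calF(\rho(\Gamma))$ is a single point for every $V \in L(\rho(\Gamma))$.

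The only genuine subtlety — and the step I would be most careful about — is justifying that $L_\calF(\rho(\Gamma))$ and $L(\rho(\Gamma))$ are \emph{exactly} the images of the boundary maps rather than merely containing the sets of attracting fixed points: one needs that the set $\{\gamma^+ : \gamma \text{ proximal}\}$ is dense in $\partial\Gamma$ (true for non-elementary hyperbolic $\Gamma$, and Borel Anosov images are non-elementary) together with continuity of $\xi$, $\xi^1$ to pass to closures, plus the fact cited in the excerpt from \cite[Lemma 9.5]{benoist_random_2016} that the limit set is the unique minimal closed invariant set, which pins down $L_\calF(\rho(\Gamma)) = \overline{\xi(\{\gamma^+\})} = \xi(\partial\Gamma)$ since $\xi(\partial\Gamma)$ is closed, $\rho(\Gamma)$-invariant, and minimal (as the $\Gamma$-action on $\partial\Gamma$ is minimal and $\xi$ is equivariant). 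Once this identification is in hand the rest is soft topology; no estimates are needed. An alternative, more self-contained route avoiding the boundary maps would be to argue directly: if $\xi, \eta \in L_\calF(\rho(\Gamma))$ with $\xi^1 = \eta^1$, approximate both by attracting flags $\xi(\gamma_m^+), \xi(\delta_m^+)$ whose first coordinates converge to the same point, deduce $\gamma_m^+$ and $\delta_m^+$ approach the same boundary point by a North–South dynamics / transversality argument, and then conclude $\xi = \eta$; but this essentially re-proves continuity and injectivity of the limit map, so I would prefer to quote the standard Anosov structure theory.
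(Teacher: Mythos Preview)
Your proposal is correct and follows essentially the same approach as the paper: both invoke the Anosov limit maps $\xi^k:\partial\Gamma\to\mr{Gr}_k(\RR^n)$, use transversality $\xi^1(x)\oplus\xi^{n-1}(y)=\RR^n$ for $x\neq y$ to deduce injectivity of $\xi^1$, identify $L_\cF(\rho(\Gamma))=\xi(\partial\Gamma)$ and $L(\rho(\Gamma))=\xi^1(\partial\Gamma)$, and conclude trivial fibre. Your write-up is in fact more careful than the paper's about the identification of the limit sets with the images of the boundary maps and the compact-to-Hausdorff argument; the paper simply asserts these identifications (with a footnote citing the Cartan property from Canary's notes) and concludes directly.
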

\begin{proof}
    Because $\rho:\Gamma\rightarrow \SL_n(\RR)$ is Borel Anosov, we can apply \cite[Theorem 31.1]{canary_note}. Then there exists a continuous $\rho$ equivariant map $(\xi^1,\xi') $ from $\partial\Gamma$ to $\P(\R^n)\times \mathrm{Grass}(n-1, \R^n)$, such that $\xi^1$ satisfies the Cartan property in \cite[Chapter 30]{canary_note} and $\xi^1(x)\sbs\xi'(x),$
    \[\xi^1(x)\oplus \xi'(y)=\R^n,\quad \forall x\ne y. \]
    It follows that $\xi^1$ is injective from $\partial\Gamma$ to $\P(\R^n)$. Moreover, the image of $\xi^1$ is exactly $L(\rho(\Gamma)).$ 
    
    From Borel Anosov property, $\xi^1$ can be extended to  a limit map $\xi:\partial\Gamma\rightarrow \calF $ given by $\xi(x)=(\xi^1(x)\subset \xi^2(x)\subset \cdots\subset \xi^{n-1}(x))$. \footnote{The existence of the limit map $\xi^k(x)$ is from $P_k$-Anosov and the consistence condition $\xi^k(x)\subset \xi^{k+1}(x)$ can be deduced from the Cartan property of the limit map (see for example Section 30 and 31 in \cite{canary_note}), that is $\xi^k(x)=\lim_{n\rightarrow\infty}U_k(\gamma_n) $. Here $\gamma_n$ is a sequence converges to the boundary point $x$ and $U_k(\gamma_n)=k_{\gamma_n}(E_1\oplus\cdots \oplus E_k)$ from the Cartan decomposition $\gamma_n=k_{\gamma_n}a_{\gamma_n}k'_{\gamma_n}$. } The image of $\xi$ is $L_\calF(\rho(\Gamma))\subset \calF$. The map $\xi^1$ being injective implies that the natural projection of $L_\calF(\rho(\Gamma))$ to $L(\rho(\Gamma))$ has trivial fiber. 
\end{proof}

    By Lemma \ref{lem.trivial.fibre}, we know that the fiber is trivial for all $\xi\in L(\rho(\Gamma))$. Hence we have the relation of \textit{relative measure-preserving} of $(\calF(\RR^n),\mu_\calF)\rightarrow (\P(\R^n),\mu )$, that is for $\nu$ a.e. $g$ and $\mu$ a.e. $\xi$
    \[  g\mu^\xi=g\delta_{\pi^{-1}\xi}=\delta_{\pi^{-1}(g\xi)}=\mu^{g\xi}. \]
    By \cite[Proposition 2.25]{furman_random_2002} (see also \cite{kaimanovich_random_1983}), we obtain 
    \[h_{\mathrm{F}}(\mu_\calF,\nu)=h_{\mathrm{F}}(\mu,\nu) \]
    Then by Proposition \ref{prop.entropy.discrete}, the proof is complete.
\end{proof}

\section{Free sub-semigroups in hyperbolic groups}\label{sec: geometric group thry}
This section is devoted to establish the geometric group theoretical preparation for the later proof of variational principle. In this section, $\Gamma$ is a finitely generated group with a fixed symmetric generating set $\cS$. For every $g\in \Gamma,$ let $|g|$ denote the word length of $g$ with respect to $\cS.$ For a positive integer $L,$ let $A(L)\defeq\lb{g\in\Gamma: |g|=L}$ refer to an annulus. The following is the main proposition of this section.

\begin{prop}\label{prop: free semigroup}
    Let $\Gamma$ be a non-elementary torsion-free hyperbolic group and $\rho: \Gamma\to\SL_n(\RR)$ be a faithful representation. Let $\bG$ be the Zariski closure of $\rho(\Gamma)$ which is assumed to be Zariski connected. Then there exists a finite subset $F\sbs\Gamma$ with $\#F\geqslant 3,$ constants $C_1,C_2,L_0>0$ and $m\in\ZZ_+$ such that the following holds.
    
    For every subset $S\sbs A(L)$ for some $L\geqslant L_0$ there exists a subset $S'\sbs S$ with $\#S'\geqslant C_1^{-1} \#S$ and $F'\sbs F$ with $\#F'=\#F-2$ satisfying
    \begin{enumerate}[(1)]
        \item $\lb{\rho(f)^m:f\in F'}$ generates a semigroup whose Zarski closure is $\bG.$
        \item $\wt S\defeq\lb{sf^{\varsigma }:s\in S',f\in F',\varsigma = m,2m}\sbs\Gamma$ freely generates a free semigroup.
        \item For any sequence of elements $\wt s_1,\cdots,\wt s_k \in \wt S,$ we have
	\begin{equation}\label{eqn: constant cancellation2}
		|\wt s_1\cdots\wt s_k|\geqslant \sum_{i=1}^{k}|\wt s_i|-kC_2.
	\end{equation}
    \end{enumerate}    
\end{prop}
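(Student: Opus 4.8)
The plan is to decouple the two jobs that $F$ must do. Item~(1) is the only place the linear data $(\rho,\bG)$ enters; items~(2)--(3) are purely statements about the word metric of $\Gamma$, so the bulk of the argument is a construction of free sub-semigroups ``by decoration'', carried out with the geometry of loxodromic (contracting) elements of hyperbolic groups, in the spirit of \cite{Yang19}. Since $\Gamma$ is torsion-free and non-elementary, every nontrivial element is loxodromic with positive stable translation length $\ell(\cdot)$, loxodromics with pairwise independent axes abound, and all geodesics are uniformly Morse/contracting with constants depending only on the hyperbolicity constant $\delta$; in particular the ``seam'' constants $\tau,T$ appearing below are uniform. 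Using that $\rho(\Gamma)$ is Zariski dense in the \emph{connected} group $\bG$ — together with the facts that the Zariski closure of a sub-semigroup of an algebraic group is a subgroup, and that the $m$-th power map of a connected algebraic group is dominant (so $m$-th powers of a Zariski-dense set again generate a Zariski-dense subgroup) — one fixes $m\in\ZZ_+$ large and, by a genericity argument inside $\Gamma$ (elements with a dependent axis, or with small translation length, form an exponentially thin set), chooses $F=F_1\sqcup F_2\sqcup F_3$ so that: (i) each $f\in F$ is loxodromic with $\ell(f)$ exceeding all seam and cancellation constants below and with $m\,\ell(f)$ very large; (ii) the $2\#F$ axis endpoints $\{f^{\pm}:f\in F\}\subseteq\partial\Gamma$ are pairwise far apart; (iii) for each $j$, the semigroup generated by $\{\rho(f)^{m}:f\in F_j\}$ has Zariski closure $\bG$. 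Deleting any two elements of $F$ leaves some $F_j$ intact, so item~(1) will hold automatically for the resulting $F'$.

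\emph{Producing $S'$ and $F'$.} Fix $S\sbse A(L)$ with $L\geq L_0$. Because the points $\{f^+\}$ (resp.\ $\{f^-\}$) are pairwise far apart, for each $s$ there is at most one $f\in F$ with $|sf^{m}|\leq|s|+|f^{m}|-\tau$ — equivalently, with $(s^{-1}\mid f^{+})_e$ large — call it $b_1(s)$ (this is precisely the $f$ for which the seam between $s$ and $f^{m}$ backtracks); similarly at most one $f\in F$ with $|f^{m}s|\leq|f^{m}|+|s|-\tau$, i.e.\ with $(s\mid f^{-})_e$ large, call it $b_2(s)$. Partitioning $S$ by the pair $(b_1(s),b_2(s))$ and pigeonholing gives $S_0\sbse S$ with $\#S_0\geq(\#F+1)^{-2}\#S$ on which $b_1,b_2$ are constant; set $F'=F\setminus\{b_1,b_2\}$ (padding with dummy deletions if they coincide or are undefined), so $\#F'=\#F-2$ and (1) holds for $F'$. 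Inside $S_0$ take a maximal $D$-separated subset $S'$, with $D$ a constant fixed in the next step; since a $D$-ball meets $A(L)$ in at most $C_D$ points, $\#S'\geq C_D^{-1}\#S_0$, hence $\#S'\geq C_1^{-1}\#S$ with $C_1=C_D(\#F+1)^2$.

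\emph{Freeness and the length estimate.} Put $\wt S=\{sf^{\varsigma}:s\in S',\,f\in F',\,\varsigma\in\{m,2m\}\}$. A product $\wt s_1\cdots\wt s_k=s_1f_1^{\varsigma_1}\cdots s_kf_k^{\varsigma_k}$ has seams only of the two types $(s_i\mid f_i^{\varsigma_i})$ and $(f_i^{\varsigma_i}\mid s_{i+1})$; as $b_1,b_2$ are constant on $S'$ and $f_i\in F'$, the backtracking at every seam is $\leq\tau$, while each decoration $f_i^{\varsigma_i}$ is a uniformly contracting segment of length $\geq m\,\ell_{\min}\gg\tau$. The contracting-concatenation lemma then shows that every such product is an unparametrised $(\lambda,c)$-quasi-geodesic with $\lambda,c$ independent of $k$, and that its length is at least the total length of the $2k$ pieces minus $O(\tau k)$; since $\sum_i|\wt s_i|\leq\sum_i(|s_i|+|f_i^{\varsigma_i}|)$ equals that total length, this yields \eqref{eqn: constant cancellation2} for a suitable $C_2$. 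Freeness is an induction on the number of factors: if $\wt s_1\cdots\wt s_k=\wt t_1\cdots\wt t_l=g$, the length estimate together with $|s|=L\geq L_0$ gives $l=0\iff k=0$; otherwise the Morse property applied to the two (uniform) quasi-geodesic concatenation paths places both $\wt s_1$ and $\wt t_1$ within a uniform distance of $[e,g]$ at parameter in $[L-2\tau,L+R]$ (here $R$ bounds the decoration lengths), so $d(\wt s_1,\wt t_1)\leq C_8$ with $C_8$ uniform. Writing $\wt s_1=s_1f_1^{\varsigma_1}$, $\wt t_1=t_1g_1^{\eta_1}$, this gives $d(s_1,t_1)\leq C_8+2R<D$, hence $s_1=t_1$ by $D$-separation; then $f_1^{-\varsigma_1}g_1^{\eta_1}=\wt s_1^{-1}\wt t_1$ has length $\leq C_8$, and since the elements of $F$ have pairwise far-apart axis endpoints and $m\,\ell_{\min}>C_8$ this forces $f_1=g_1$ and $\varsigma_1=\eta_1$, i.e.\ $\wt s_1=\wt t_1$; cancelling reduces to a shorter product, completing the induction.

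\emph{Main obstacle.} The real work is the contracting-concatenation step: one must make the machinery of admissible/fellow-travelling paths (as in \cite{Yang19}) apply with constants \emph{uniform in the number of factors} $k$, even though the generic legs $s_i$ are arbitrary geodesic words of length $L$ (which may, for instance, run for a long time along the axis of some $f\in F'$, so that a decoration is absorbed into a neighbouring leg). The quantitative point that unlocks this — that at most \emph{two} directions are ever bad, one per seam type, so that discarding two elements of $F$ always suffices — is the crux, and it relies on choosing $F$ with large translation lengths and widely separated axis endpoints and on keeping $S$ inside a single annulus $A(L)$, so that the decorations stay long and rigidly located. Arranging the algebraic conditions (i)--(iii) on $F$ simultaneously is routine but also needs care.
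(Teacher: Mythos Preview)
Your overall architecture matches the paper's: fix a finite set $F$ of pairwise independent loxodromics, decorate annulus words by $f^m$ or $f^{2m}$, pigeonhole to discard two ``bad'' elements of $F$, pass to a separated subset, and use contracting/Morse geometry for the quasi-geodesic estimate. Your three-copies trick $F=F_1\sqcup F_2\sqcup F_3$ for the Zariski condition is a pleasant alternative to the paper's inductive construction (\cref{lem: independent elements}), and the dominance of the $m$-th power map is a legitimate substitute for the component-counting argument of \cref{lem: Zariski closure 2} since $\bG$ is connected.

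There is, however, a genuine gap in your freeness argument, at the step where you distinguish the exponents $m$ and $2m$. You bound $d(\wt s_1,\wt t_1)\le C_8$ via Morse and then assert that $m\,\ell_{\min}>C_8$ forces $f_1=g_1$ and $\varsigma_1=\eta_1$. But $C_8$ is, by your own construction, at least the width of the parameter window $[L-2\tau,L+R]$, and that width is $\ge \max_f|f^{2m}|-\min_f|f^m|\ge |f^m|-O(1)$. Concretely, if $s_1=t_1$, $f_1=g_1=f$, $\varsigma_1=m$, $\eta_1=2m$, then $d(\wt s_1,\wt t_1)=|f^m|$ exactly, so $C_8\ge |f^m|$ and the inequality $m\,\ell_{\min}>C_8$ cannot be arranged. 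Your argument does rule out $f_1\neq g_1$ (there $|f_1^{-\varsigma_1}g_1^{\eta_1}|\approx |f_1^{\varsigma_1}|+|g_1^{\eta_1}|\gg C_8$), but it cannot separate $sf^m$ from $sf^{2m}$.

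The paper resolves this in two stages. First it pins down $s_1=s_1'$ \emph{directly}, using that $|s_1|=|s_1'|=L$ so the intermediate points $y_1=s_1o$, $y_1'=s_1'o$ project to the same parameter on $[o,g]$ (this avoids the $m$-dependent slop you pick up by comparing $\wt s_1,\wt t_1$). For $f_1=f_1'$ it uses bounded intersection of axis neighbourhoods (\cref{lem: bounded intersection}). Finally, for the exponent, it observes that if $\varsigma_1=m$, $\varsigma_1'=2m$ then the points $y_2,x_1,x_1',y_2'$ still form a $(\tau,D)$-chain (the $\tau$-contracting property controls the Gromov products at $x_1$ and $x_1'$), so one can splice the two chains into a single $(\tau,D)$-chain from $x_k$ back to $x_\ell'=x_k$, contradicting positivity of the chain length. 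You should replace your last paragraph with this two-stage argument; the rest of your sketch is sound.
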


\subsection{Preliminaries on geometric group theory}

Recall that $\Gamma$ is a finitely generated group and $\cS$ is a fixed symmetric generating set. Let $X=\Cay(\Gamma,\cS)$ be the Cayley graph of $\Gamma$ with respect to $\cS.$ Endow $X$ with the graph metric $d_{\cS},$ which makes $X$ a proper geodesic space. We abbreviate $d_\cS$ to $d$ in this section. Then $\Gamma$ has a natural left action on $(X,d)$ by isometries. Letting $o$ be the point in $X$ corresponding to the identity in $\Gamma,$ we fix $o$ as the base point. Then the word length $|g|$ is equal to $d(o,go).$

For every subset $Y\sbs X$ and $r>0,$ we use $\cN_r(Y)$ to denote the $r$-neighborhood of $Y.$ For two subsets $Y_1,Y_2\sbs X,$ we use $d_{\mr H}(Y_1,Y_2)$ to denote the Hausdorff distance between $Y_1,Y_2$ with respect to $d,$ given by
\[d_{\mr H}(Y_1,Y_2)\defeq\inf\lb{0<r\leqslant\infty: Y_1\sbs \cN_r(Y_2)\text{ and }Y_2\sbs \cN_r(Y_1)}.\]

For a subset $Y\sbs X$ and a point $x\in X,$ we denote $\pi_Y(x)$ to be the projection of $x$ on $Y,$ that is $\pi_Y(x)\defeq\lb{y\in Y:d(x,y)=d(x,Y)}.$ For another subset $Z\sbs X,$ the projection of $Z$ on $Y$ is $\pi_Y(Z)\defeq\cup_{z\in Z}\pi_Y(z).$

For a rectifiable path $\alpha\sbs X,$ we denote $\alpha_-$ and $\alpha_+$ to be the initial and terminal points of $\alpha,$ respectively. The length of $\alpha$ is denoted by $\ell(\alpha).$ For every pair $x,y\in X,$ we denote $[x,y]$ to be a choice of a geodesic between $x$ and $y.$

A path $\alpha$ is called a \textit{$c$-quasi-geodesic} for $c\geqslant 1$ if
\[\ell(\beta)\leqslant c\cdot d(\beta_-,\beta_+)+c \]
for every rectifiable subpath $\beta\sbs\alpha.$ Morse lemma states that every $c$-quasi-geodesic $\alpha$ is contained in the $c'$-neighborhood of $[\alpha_-,\alpha_+]$ in a $\delta$-hyperbolic space, where $c'$ only depends on $c$ and $\delta.$ A subset $Y\sbs X$ is called \textit{$c$-quasi-convex} for $c\geqslant 0$ if for every $x_1,x_2\in Y,$ we have $[x_1,x_2]\sbs\cN_c(Y).$ A quasi-geodesic can also be interpreted as a quasi-convex subset.

Recall that $\Gamma$ is a ($\delta$-)hyperbolic group if and only if $X$ is a ($\delta$-)hyperbolic space.
That is, for every geodesic triangle in $X,$ every edge is contained in the $\delta$-neighborhood of the other two edges. In the following of this section, we always assume that $\Gamma$ is a finitely generated $\delta$-hyperbolic group. 
Every infinite order element $g\in\Gamma$ is \textit{loxodromic} in the following sence
\[\lim_{n\to+\infty}\frac{1}{n}|g^n|=\lim_{n\to+\infty}\frac{1}{n}d(o,g^no)>0.\]
In fact, for every loxodromic element $g,$ the map $n\mapsto g^no$ is a \textit{quasi-isometric embedding} from $\ZZ$ to $(X,d).$ That is, there exists $c_1,c_2,c_3,c_4>0$ such that for every $m,n\in\ZZ,$
\[c_1|m-n|-c_2\leqslant d(g^mo,g^no)\leqslant c_3|m-n|+c_4.\]

For every loxodromic element $g\in\Gamma,$ the set
\[E(g)\defeq\lb{h\in\Gamma:d_{\mr H}(h\pair{g}o,\pair{g}o)<\infty}\]
is a subgroup of $\Gamma$ satisfying $[E(g):\pair g]<\infty.$
We use $\Ax(g)=E(g)o\sbs X$ to denote the axis corresponding to $g,$ which is a quasi convex subset.
We also remark that if $g_1\in E(g_2)$ is also loxodromic then $E(g_1)=E(g_2).$
Two loxodromic elements $g_1,g_2$ are called \textit{independent}\footnote{This definition of independence is different with the one in \cite{Yang19}. But this is enough for our applications for hyperbolic groups.
} if $E(g_1)\ne E(g_2),$ namely $E(g_1)\cap E(g_2)$ is finite. Moreover, we have the following bounded intersection property.
\begin{lem}\label{lem: bounded intersection}
 	Let $\Ax(g_1),\Ax(g_2)\sbs X$ be different axes, then for every $r>0,$ $\cN_r(\Ax(g_1))\cap\cN_r(\Ax(g_2))$ is bounded.
\end{lem}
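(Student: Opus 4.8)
The plan is a proof by contradiction: from an unbounded intersection of the two neighborhoods I will extract a nontrivial element lying in $E(g_1)\cap E(g_2)$, and since such an element is automatically loxodromic this will force $E(g_1)=E(g_2)$, hence $\Ax(g_1)=\Ax(g_2)$. Note first that $\Gamma$ acts freely on $X=\Cay(\Gamma,\cS)$, so the sets $\Ax(g_1)=E(g_1)o$ and $\Ax(g_2)=E(g_2)o$ coincide if and only if $E(g_1)=E(g_2)$; thus ``$\Ax(g_1),\Ax(g_2)$ different'' means precisely $E(g_1)\neq E(g_2)$, which is what we want to contradict.

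First I would fix coset representatives: since $[E(g_i):\pair{g_i}]<\infty$, choose finite sets $R_i\sbs E(g_i)$ with $E(g_i)=\bigsqcup_{\rho\in R_i}\pair{g_i}\rho$, and set $M\defeq\max_i\max_{\rho\in R_i}|\rho|<\infty$. Now suppose $\cN_r(\Ax(g_1))\cap\cN_r(\Ax(g_2))$ is unbounded, and pick $p_k$ in it with $d(o,p_k)\to\infty$. By definition there exist $a_k=g_1^{i_k}\rho_k\in E(g_1)$ and $b_k=g_2^{j_k}\sigma_k\in E(g_2)$, with $\rho_k\in R_1$, $\sigma_k\in R_2$, such that $d(p_k,a_ko)\le r$ and $d(p_k,b_ko)\le r$. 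Left-invariance of $d$ and the triangle inequality then give
\[d(g_1^{i_k}o,g_2^{j_k}o)\leq |\rho_k|+d(a_ko,b_ko)+|\sigma_k|\leq 2M+2r=:D,\]
so $w_k\defeq g_1^{-i_k}g_2^{j_k}$ lies in the finite set $\{g\in\Gamma:|g|\leq D\}$. Meanwhile $d(o,g_1^{i_k}o)\geq d(o,p_k)-r-M\to\infty$, and since $n\mapsto g_1^no$ is a quasi-isometric embedding of $\ZZ$ this forces $|i_k|\to\infty$.

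Next, pigeonhole on the finite set $\{g\in\Gamma:|g|\leq D\}$: some value $w$ is attained by $w_k$ for infinitely many $k$, and as $|i_k|\to\infty$ along that subsequence we may select two such indices $k\neq k'$ with $i_k\neq i_{k'}$. From $g_1^{-i_k}g_2^{j_k}=g_1^{-i_{k'}}g_2^{j_{k'}}$ we get $g_1^{\,i_{k'}-i_k}=g_2^{\,j_{k'}-j_k}=:h$. Since $g_1$ has infinite order and $i_{k'}\neq i_k$, the element $h$ is nontrivial, hence of infinite order, hence loxodromic; as $h\in\pair{g_1}\sbs E(g_1)$ is loxodromic we get $E(h)=E(g_1)$, and as $h\in\pair{g_2}\sbs E(g_2)$ is loxodromic we get $E(h)=E(g_2)$. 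Therefore $E(g_1)=E(g_2)$, i.e.\ $\Ax(g_1)=\Ax(g_2)$, contradicting the hypothesis.

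I expect no serious obstacle here; the argument is essentially bookkeeping about the structure of $E(g)$. The one point that requires care is the final extraction step — ensuring the pigeonhole yields two indices with $i_k\neq i_{k'}$, so that $h$ is genuinely nontrivial — which is exactly where the divergence $|i_k|\to\infty$ (coming from $d(o,p_k)\to\infty$ together with the quasi-isometric embedding $n\mapsto g_1^no$) is used. Alternatively one could invoke the Morse lemma — for two far-apart points $p,q$ in the intersection, long sub-geodesics of $\Ax(g_1)$ and of $\Ax(g_2)$ each fellow-travel $[p,q]$ and hence each other — and argue from the resulting long commensurable segments; but the direct argument above avoids any appeal to stability of quasi-geodesics.
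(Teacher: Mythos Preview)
Your proof is correct and follows essentially the same approach as the paper's. The paper reduces first to the cyclic orbits $\pair{g_i}o$ via the bound $d_{\mr H}(\Ax(g_i),\pair{g_i}o)<\infty$ (which amounts to your coset-representative step), then applies the same pigeonhole on $g_1^{-n_1}g_2^{n_2}$ to produce a common nontrivial power; your version is just more explicit about ensuring nontriviality via $|i_k|\to\infty$.
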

\begin{proof}
	Since $d_{\mr H}(\Ax(g_i),\pair{g_i}o)<\infty,$ it suffices to show that $ \cN_r(\pair{g_1}o))\cap \cN_r(\pair{g_2}o)$ is bounded. 
	Otherwise, there exists infinitely many pairs of integers $(n_1,n_2)$ such that $d(g_1^{n_1}o,g_2^{n_2}o)\leqslant 2r.$ Then there exists $(n_1,n_2)\ne(n_1',n_2')$ such that
	\[g_2^{-n_2}g_1^{n_1}=g_2^{-n_2'}g_1^{n_1'}.\]
	This implies that $g_1^{n_1'-n_1}=g_2^{n_2'-n_2}$, which contradicts $\Ax(g_1)\ne\Ax(g_2).$
\end{proof}

In the case of $\Gamma$ is torsion-free, every nontrivial element is loxodromic. By a classification of virtually cyclic group \cite[Lemma 11.4]{JH04}, $E(g)$ is cyclic for every nontrivial element $g\in\Gamma.$ Moreover, $g_1$ and $g_2$ are independent if and only if $E(g_1)\cap E(g_2)=\lb\id.$
\subsection{An extension lemma}

Now we give the main technical tool for showing Proposition \ref{prop: free semigroup}.
It is a variant of \cite[Lemma 2.19]{Yang19}, which states more generally for the group with contracting elements. 
A direct proof for the case of hyperbolic groups is given in this section, which is also inspired by the work of W. Yang.

For a subset $S\sbs\Gamma,$ we say $S$ is \textit{$R$-separated} for $R>0$ if $\lb{so:s\in S}$ is $R$-separated in $X.$

\begin{prop}\label{prop: extension lemma}
	Let $\Gamma$ be a non-elementary hyperbolic group. Let $F\sbs \Gamma$ be a finite subset of pairwise independent loxodromic elements with $\#F\geqslant 3.$ Then there exist $m_0,R,C_1,L_0>0$ such that the following holds.
	
	For every $R$-separated subset $S\sbs A(L)$ for some $L\geqslant L_0$ there exists a subset $S'\sbs S$ with $\#S'\geqslant C_1^{-1} \#S$ and $F'\sbs F$ with $\#F'=\#F-2$ such that for every $m\geqslant m_0,$
	\[\wt S\defeq\lb{sf^{\varsigma }:s\in S',f\in F',\varsigma = m,2m}\sbs\Gamma\]
	freely generates a free semigroup. Furthermore, there exists $C_2>0$ only depends on $F$ such that for any sequence of elements $\wt s_1,\cdots,\wt s_k \in \wt S,$ we have
	\begin{equation}\label{eqn: constant cancellation}
		|\wt s_1\cdots\wt s_k|\geqslant \sum_{i=1}^{k}|\wt s_i|-kC_2.
	\end{equation}
\end{prop}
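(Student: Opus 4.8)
The plan is to adapt W.\ Yang's extension-lemma argument to this setting. The two geometric inputs are the uniform bounded-intersection property of the axes of $F$ (\cref{lem: bounded intersection}, applied to the finitely many pairs of elements of $F$) and the standard gluing criterion in a $\delta$-hyperbolic space: a concatenation of geodesic segments each of length $\ge L^\ast=L^\ast(\delta,B)$ and with all junction Gromov products $\le B$ is a $(\lambda^\ast,c^\ast)$-quasi-geodesic, with $\lambda^\ast,c^\ast$ depending only on $\delta$ and $B$; let $K=K(\delta,B)$ be the associated Morse/fellow-travelling constant. Throughout, $o$ is the base point, $\Ax(f)\ni o$ for every $f$, and $\tau_\ast>0$ is a lower bound for the translation lengths of elements of $F$.

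\emph{Step 1: choosing $F'$ and $S'$.} Fix $r_0$ large compared with $\delta$ and with the quasiconvexity constants of the $\Ax(f)$, $f\in F$; by \cref{lem: bounded intersection} there is $\Delta_0$ with $\diam\big(\cN_{r_0}(\Ax f)\cap\cN_{r_0}(\Ax f')\big)\le\Delta_0$ for distinct $f,f'\in F$, and pick $B>\Delta_0+O(\delta)$. Call $s\in\Gamma$ \emph{$f$-hugging} if some initial subsegment of $[o,so]$ of length $\ge B$ stays within $r_0$ of $\Ax(f)$. Two initial subsegments of $[o,so]$ are nested, so if $s$ were both $f_i$- and $f_j$-hugging with $f_i\ne f_j$, then a length-$B$ geodesic segment would lie in $\cN_{r_0}(\Ax f_i)\cap\cN_{r_0}(\Ax f_j)$, forcing $B\le\Delta_0$ — impossible. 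Hence at most one $f\in F$ makes $s$ $f$-hugging and at most one makes $s^{-1}$ $f$-hugging, so $\phi(s):=\{f\in F: s\text{ or }s^{-1}\text{ is }f\text{-hugging}\}$ has at most two elements. Enlarge each $\phi(s)$ to a two-element $T(s)\sbs F$; by pigeonhole over the $\binom{\#F}{2}$ two-element subsets of $F$ there is a fixed $T$ with $S':=\{s\in S:T(s)=T\}$ of size $\ge\#S/\binom{\#F}{2}$. Put $F':=F\sm T$, so $\#F'=\#F-2$ and $\#S'\ge C_1^{-1}\#S$ with $C_1:=\binom{\#F}{2}$. For $s\in S'$, $f\in F'$ neither $s$ nor $s^{-1}$ is $f$-hugging; since $[o,f^{\pm n}o]$ runs within $O(\delta)$ of $\Ax(f)^{\pm}$ for $n$ large, this gives the uniform bounds $(so\mid f^{\pm n}o)_o\le B$ and $(s^{-1}o\mid f^{\pm n}o)_o\le B$ for all $n$, which is exactly what will control the junctions below.

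\emph{Step 2: the quasi-geodesic path and \eqref{eqn: constant cancellation}.} Choose $m_0$ large in terms of all preceding constants (in particular $\tau_\ast m_0\ge L^\ast$ and $\tau_\ast m_0$ exceeds the bounded-intersection bounds at scales comparable to $K$), then $L_0\ge L^\ast$ large, and finally $R>4K$. Fix $m\ge m_0$ and a word $\wt s_1\cdots\wt s_k$ with $\wt s_i=s_if_i^{\varsigma_i}\in\wt S$, and consider the concatenation of the $2k$ geodesics
\[[o,s_1o],\ s_1[o,f_1^{\varsigma_1}o],\ \wt s_1[o,s_2o],\ \wt s_1s_2[o,f_2^{\varsigma_2}o],\ \dots,\ \wt s_1\cdots\wt s_{k-1}s_k[o,f_k^{\varsigma_k}o].\]
Translating each junction back to $o$, its Gromov product is one of $(s_i^{-1}o\mid f_i^{\varsigma_i}o)_o$ or $(f_i^{-\varsigma_i}o\mid s_{i+1}o)_o$, both $\le B$ by Step 1, and every piece has length $\ge\min(L_0,\tau_\ast m)\ge L^\ast$; hence the path is a $(\lambda^\ast,c^\ast)$-quasi-geodesic. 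Comparing its break-points with their nearest points on the geodesic $[o,\wt s_1\cdots\wt s_ko]$ (these occur in order along that geodesic, since the path is a quasi-geodesic) gives $|\wt s_1\cdots\wt s_k|\ge\sum_i(|s_i|+|f_i^{\varsigma_i}|)-2k(2K+O(\delta))\ge\sum_i|\wt s_i|-kC_2$ with $C_2:=4K+O(\delta)$, which is \eqref{eqn: constant cancellation}; in particular each $\wt s\in\wt S$ has $|\wt s|\ge L_0+\tau_\ast m-C_2\ge1$.

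\emph{Step 3: freeness.} Suppose $\wt s_1\cdots\wt s_k=\wt t_1\cdots\wt t_l$ with all letters in $\wt S$ and cancel the longest common prefix. If one side becomes empty, the other, if nonempty, is a positive word of positive length equal to $\id$ by Step 2 — impossible; so either $k=l$ with $\wt s_i=\wt t_i$ (done), or after cancellation $\wt s_1\ne\wt t_1$, which we rule out. Write $\wt s_1=sf^\varsigma$, $\wt t_1=s'f'^{\varsigma'}$. Both product-paths are $(\lambda^\ast,c^\ast)$-quasi-geodesics with the same endpoints, hence $K$-fellow-travel the geodesic between those endpoints; the first minor break-points $so$ and $s'o$ lie at distance exactly $L$ from $o$ and within $K$ of that geodesic, so $d(so,s'o)\le4K<R$ and $R$-separation of $S$ forces $s=s'$. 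Cancelling $s$ and comparing the next pieces: if $f\ne f'$, the initial segments of $[o,f^\varsigma o]$ and $[o,f'^{\varsigma'}o]$ run along $\Ax(f)$ and $\Ax(f')$ while both staying within $K$ of the common endpoint geodesic, placing a segment of length $\gtrsim\tau_\ast m$ into $\cN_{r_1}(\Ax f)\cap\cN_{r_1}(\Ax f')$ for a bounded $r_1$ — impossible for $m\ge m_0$. Hence $f=f'$; cancelling $\min(\varsigma,\varsigma')$ copies of $f$ and using $\varsigma,\varsigma'\in\{m,2m\}$, the leftover power is $f^0$ (then $\wt s_1=\wt t_1$, contradiction) or $f^m$ on one side. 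Isolating it produces either a nontrivial power $f^{\pm m}$ equal to a reduced positive word in the letters of $\wt S$ (whose path starts with a length-$L$ geodesic that is not $f$-hugging, while the power's path runs along $\Ax(f)^{\pm}$ — these cannot fellow-travel, using again that $f\in F'$ and the leading $s$-letter lies in $S'$), or $f^{\pm m}=\id$ (impossible, $f$ loxodromic hence of infinite order). Every case is contradictory, so $\wt S$ freely generates a free semigroup.

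\emph{Expected main difficulty.} The substantive point — and where $\#F\ge3$ and the pairwise independence of $F$ are used — is Step 1: pinning down exactly which ``non-hugging'' conditions on the pairs $(s,f)$ are needed downstream so that discarding only two elements of $F$ suffices, and then threading the constants through in the non-circular order $\delta,F\rightsquigarrow r_0\rightsquigarrow\Delta_0\rightsquigarrow B\rightsquigarrow(L^\ast,\lambda^\ast,c^\ast,K,C_2)\rightsquigarrow m_0\rightsquigarrow L_0\rightsquigarrow R$, so that $K$ is fixed before $m_0,L_0$ are taken large and $R$ is taken $>4K$. The quasi-geodesic gluing criterion and the uniform bounded-intersection bound do all the real geometric work; the role of the two powers $m,2m$ is only to make the final leftover in Step 3 either trivial or a single $f^m$, which the hugging dichotomy then kills.
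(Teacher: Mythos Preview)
Your argument is essentially the paper's: ``not $f$-hugging for $s$ and $s^{-1}$'' is exactly the paper's ``$f$ is $\tau$-contracting for $s$'', the pigeonhole in Step~1 and the concatenated chain in Step~2 match the paper's $(\tau,D)$-chain construction, and in Step~3 you recover $s=s'$ and $f=f'$ by the same fellow-travelling plus bounded-intersection argument.

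The one place your exposition is loose is the final case of Step~3. After cancelling $sf^m$ you are left with $\wt s_2\cdots\wt s_k=f^m\,\wt t_2\cdots\wt t_l$, and when both $k,l\ge2$ this is \emph{not} of either form ``$f^{\pm m}$ equals a positive word'' or ``$f^{\pm m}=\id$'', so your stated dichotomy does not literally cover the generic case. Your parenthetical ``cannot fellow-travel'' is the correct argument and does handle it --- both sides are quasi-geodesics from $o$ to the same endpoint, one beginning with $[o,s_2o]$ and the other with $[o,f^mo]$, and the bound $(s_2o\mid f^mo)_o\le B$ from Step~1 forces them apart past time $B+O(K)$ --- but the sentence as written obscures that this single argument is what runs in all cases. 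The paper packages this step more cleanly, without cancelling at all: it observes that the concatenation
\[x_k,\,y_k,\,\ldots,\,y_2,\,x_1,\,x_1',\,y_2',\,\ldots,\,y_\ell',\,x_\ell'\]
(the first word run backwards, then the bridge $[x_1,x_1']=s_1[f_1^mo,f_1^{2m}o]$, then the second word run forwards) is again a $(\tau,D)$-chain, since the two new junction Gromov products at $x_1$ and $x_1'$ are $(s_2o\mid f_1^mo)_o$ and $(f_1^{-m}o\mid s_2'o)_o$, both bounded by Step~1; hence its endpoints are distinct, contradicting $x_k=x_\ell'$.
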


Before going through the proof, we need some preparation on hyperbolic groups. Recall that $X$ is a $\delta$-hyperbolic space. Let $\pair{x,y}_z=(d(x,z)+d(y,z)-d(x,y))/2$ be the Gromov product, then $d(z,[x,y])-10\delta\leqslant \pair{x,y}_z\leqslant d(z,[x,y]).$
\begin{defi}\label{def: CD-chain}
	A $(\tau,D)$-chain is a sequence of points $x_0,x_1,\cdots,x_n\in X$ such that 
	\begin{itemize}
		\item $\pair{x_{i-1},x_{i+1}}_{x_i}\leqslant \tau$ for every $1\leqslant i\leqslant n-1,$ and
		\item $d(x_{i-1},x_i)\geqslant D$ for every $1\leqslant i\leqslant n.$
	\end{itemize}
\end{defi}
By connecting two consecutive points in a $(\tau,D),$ we obtain a quasi-geodesic. The following lemma shows such path is indeed a uniform quasi-geodesic. See also \cite[Section 3B]{Seb22}.

\begin{lem}\label{lem: CD-chain}
	For every $\tau>0,$ there exists $D=D(\tau),c=c(\tau)>0$ such that for every $(\tau,D)$-chain $x_0,x_1,\cdots,x_n,$ the path $\bigcup_{i=1}^n[x_{i-1},x_i]$ is a $c$-quasi-geodesic.
\end{lem}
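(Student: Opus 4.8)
The plan is to show that the concatenated path $P=\bigcup_{i=1}^n[x_{i-1},x_i]$ behaves like a local geodesic, and then invoke the standard fact that, in a $\delta$-hyperbolic space, a path that is a $\lambda$-local geodesic for $\lambda$ large enough compared to $\delta$ is a global quasi-geodesic with constants depending only on $\delta$ and $\lambda$. So the real content is a local estimate: given the Gromov-product bound $\pair{x_{i-1},x_{i+1}}_{x_i}\leqslant\tau$ at each interior vertex and the lower bound $d(x_{i-1},x_i)\geqslant D$ on each edge, I want to choose $D=D(\tau)$ large so that every subpath of $P$ whose endpoints lie within a single edge, or spill over into the two adjacent edges, is ``almost geodesic'' — say, has length at most its endpoint-distance plus a controlled additive error. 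The key numerical input is that at a vertex $x_i$, the condition $\pair{x_{i-1},x_{i+1}}_{x_i}\leqslant\tau$ together with $d(o,[x,y])-10\delta\leqslant\pair{x,y}_o\leqslant d(o,[x,y])$ gives $d(x_i,[x_{i-1},x_{i+1}])\leqslant\tau+10\delta$; that is, the path does not ``backtrack'' by more than roughly $\tau+10\delta$ at each junction.

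First I would set $\lambda:=100(\tau+\delta)+100$ (any sufficiently large linear expression in $\tau,\delta$ will do) and then choose $D:=10\lambda$, say. The claim is then: with these choices, $P$ is a $\lambda$-local geodesic up to additive constant, meaning any subpath $\beta\subset P$ with $\ell(\beta)\leqslant\lambda$ satisfies $\ell(\beta)\leqslant d(\beta_-,\beta_+)+4\delta$ (or some such bound). Indeed, since $D>\lambda$, such a short subpath $\beta$ meets at most two consecutive edges $[x_{i-1},x_i]$ and $[x_i,x_{i+1}]$ and straddles a single vertex $x_i$; writing $\beta=[p,x_i]\cup[x_i,q]$ with $p\in[x_{i-1},x_i]$, $q\in[x_i,x_{i+1}]$, I estimate $d(p,q)$ from below using the fact that $x_i$ is $(\tau+10\delta)$-close to a geodesic $[x_{i-1},x_{i+1}]$, hence $(\tau+10\delta+\delta)$-close (tree approximation in a $\delta$-hyperbolic geodesic triangle) to the geodesic $[p,q]$, which yields $d(p,q)\geqslant d(p,x_i)+d(x_i,q)-2(\tau+11\delta)$; since $\ell(\beta)=d(p,x_i)+d(x_i,q)$, this is exactly the desired local near-geodesity. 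The extra additive term is absorbable since each edge has length $\geqslant D$, which is $\gg\tau+\delta$, so the cumulative ``defect'' over $k$ edges is at most a fixed fraction of the total length.

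Then I would quote (or reprove in one line via the standard local-to-global argument for quasi-geodesics in hyperbolic spaces, e.g. as in Bridson--Haefliger III.H.1.13, or the reference \cite[Section 3B]{Seb22} cited in the statement) that a path which is a $\lambda$-local geodesic with $\lambda\geqslant$ (explicit constant in $\delta$) is a $c$-quasi-geodesic with $c=c(\delta,\lambda)$; combined with the choice $\lambda=\lambda(\tau,\delta)$ above, $c$ depends only on $\tau$ (and the ambient $\delta$, which is fixed). That gives the conclusion.

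The main obstacle, such as it is, is purely bookkeeping: making sure the additive constants in the two Gromov-product inequalities, the thin-triangles estimate, and the local-to-global quasi-geodesic lemma all line up so that a single clean choice of $D$ works uniformly, and that the resulting $c$ genuinely depends only on $\tau$ and not on $n$ or on the particular chain. There is no conceptual difficulty — the lemma is a routine packaging of the ``chains with small Gromov products and long edges are quasi-geodesics'' principle — so I would keep the write-up short, stating the local estimate precisely and then citing the local-to-global principle rather than re-deriving it.
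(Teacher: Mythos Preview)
The paper does not actually supply its own proof of this lemma: it states the result and refers the reader to \cite[Section 3B]{Seb22}. So there is nothing in the paper to compare against beyond that citation.

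Your approach --- reduce to a local estimate at each vertex using the Gromov-product bound, then invoke a local-to-global principle for (quasi-)geodesics in $\delta$-hyperbolic spaces --- is the standard and correct route, and is presumably what the cited reference does as well. One small point worth tightening in the write-up: the version of the local-to-global principle in Bridson--Haefliger III.H.1.13 is stated for strict $k$-local geodesics, whereas your path has an additive defect of order $\tau+O(\delta)$ at each vertex. This is harmless, but you should either (a) cite a version that allows additive error on short subpaths (these exist and are routine), or (b) observe directly that since each edge has length $\geqslant D\gg \tau+\delta$, the cumulative defect over $k$ consecutive edges is at most $2k(\tau+O(\delta))\leqslant \frac{2(\tau+O(\delta))}{D}\cdot\ell(\beta)$, giving the multiplicative quasi-geodesic estimate by induction on the number of edges. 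Also be explicit that a general subpath $\beta$ of $P$ may begin or end in the interior of an edge, so its first and last segments can be short; this does not spoil the estimate (the defect at the two endpoints contributes only a bounded additive error), but it is worth a sentence. With those clarifications your sketch is complete.
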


The following it a bounded projection property for different axes. A general version for spaces with contracting elements can be found in \cite[Lemma 2.17]{Yang19}.

\begin{lem}\label{lem: bounded projection}
	Let $f_1,f_2\in\Gamma$ be independent loxodromic elements. There exists $\tau_1>0$ such that for every $g\in\Gamma,$ we have
	\[\min\lb{d(o,\pi_{\Ax(f_1)}(go)),d(o,\pi_{\Ax(f_2)}(go))}\leqslant \tau_1. \]
\end{lem}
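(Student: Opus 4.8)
The plan is to combine the fact that both axes pass through the base point with the bounded coarse intersection of distinct axes provided by \cref{lem: bounded intersection}. Since $\id\in E(f_i)$, we have $o\in\Ax(f_1)\cap\Ax(f_2)$; recall moreover that each $\Ax(f_i)$ is $Q$-quasi-convex for some $Q$ depending only on $f_1,f_2$, and that independence of $f_1,f_2$ gives $E(f_1)\ne E(f_2)$, hence $\Ax(f_1)=E(f_1)o\ne E(f_2)o=\Ax(f_2)$ because $\Gamma$ acts freely on $X$. Fix $g\in\Gamma$, put $p=go$, and for $i=1,2$ pick $q_i\in\pi_{\Ax(f_i)}(p)$ with $d(o,q_i)=d(o,\pi_{\Ax(f_i)}(p))=:d_i$; without loss of generality $d_1\le d_2$. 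The goal is to bound $d_1$ by a constant depending only on $\delta$ and $Q$.

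First I would invoke the standard nearest-point projection estimate in a $\delta$-hyperbolic space: if $Y$ is $Q$-quasi-convex and $q\in\pi_Y(p)$, then $d(p,z)\ge d(p,q)+d(q,z)-\kappa$ for every $z\in Y$, with $\kappa=\kappa(\delta,Q)$. Applying this with $z=o\in\Ax(f_i)$ yields $\pair{p,o}_{q_i}\le\kappa/2$, so each $q_i$ lies within some $\kappa'=\kappa'(\delta,Q)$ of any geodesic $[o,p]$; let $x_i\in[o,p]$ satisfy $d(x_i,q_i)\le\kappa'$, so $|d(o,x_i)-d_i|\le\kappa'$. Because $d_1\le d_2$, the point $x_1$ lies, up to an additive $O(\kappa')$, on the subsegment of $[o,p]$ from $o$ to $x_2$; this subsegment $O(\delta+\kappa')$-fellow-travels the geodesic $[o,q_2]$, and $[o,q_2]\subseteq\cN_Q(\Ax(f_2))$ by quasi-convexity since $o,q_2\in\Ax(f_2)$. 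Chaining these estimates shows that $q_1$ lies within some $C=C(\delta,Q)$ of $\Ax(f_2)$.

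Then I would conclude as follows: $q_1\in\Ax(f_1)\subseteq\cN_C(\Ax(f_1))$ and $q_1\in\cN_C(\Ax(f_2))$, while also $o\in\cN_C(\Ax(f_1))\cap\cN_C(\Ax(f_2))$; by \cref{lem: bounded intersection} the set $\cN_C(\Ax(f_1))\cap\cN_C(\Ax(f_2))$ has diameter at most some $B=B(C)$, so $d_1=d(o,q_1)\le B$. Setting $\tau_1:=B$ then gives $\min\lb{d_1,d_2}=d_1\le\tau_1$, and by construction $\tau_1$ depends only on $\delta$ and on $f_1,f_2$, as needed for the later uniform statements.

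The only real obstacle here is bookkeeping in the middle step, namely passing from ``$q_1$ is close to the geodesic $[o,p]$'' to ``$q_1$ is close to $\Ax(f_2)$''. The cleanest way to organize it is to observe that $o,q_1,q_2$ are coarsely collinear along $[o,p]$ in the order $o,q_1,q_2$ (this uses $d_1\le d_2$), so that thinness of triangles places $q_1$ uniformly near the geodesic $[o,q_2]$, which is trapped in a bounded neighborhood of $\Ax(f_2)$ by quasi-convexity. An alternative is to phrase everything with Gromov products: show $\pair{q_1,q_2}_o\ge d_1-O(\delta)$ when $d_1\le d_2$, and note that a large value of $\pair{q_1,q_2}_o$ forces a long common fellow-travelled segment of $\Ax(f_1)$ and $\Ax(f_2)$, again contradicting \cref{lem: bounded intersection}; I would fall back to this route if the neighborhood estimates become cumbersome.
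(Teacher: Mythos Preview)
Your argument is correct and follows essentially the same route as the paper's proof: both show that the two projection points $q_1,q_2$ lie uniformly close to the geodesic $[o,go]$ (you via the standard nearest-point projection inequality for quasi-convex sets, the paper by checking that $[o,z_i][z_i,go]$ is a quasi-geodesic and invoking the Morse lemma), and then both use that $o$ lies on both axes together with \cref{lem: bounded intersection} to conclude. The only cosmetic difference is that the paper argues by contradiction (assuming both $d_i>\tau_1$ and producing two points near $[o,go]$ at parameter $\approx\tau_1$ that lie in $\cN_{3c}(\Ax(f_1))\cap\cN_{3c}(\Ax(f_2))$), whereas you argue directly by showing the nearer projection $q_1$ itself already sits in $\cN_C(\Ax(f_1))\cap\cN_C(\Ax(f_2))$; your bookkeeping in the middle step is fine.
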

\begin{proof}
	Let $z_i\in \pi_{\Ax(f_i)}(go),$ we first show that $[o,z_i][z_i,go]$ is a $c_1$-quasi-geodesic for some $c_1>0$ only depends on $\Ax(f_1)$ and $\Ax(f_2).$ Note that there exists $c_2>0$ such that $\Ax(f_i)$ is $c_2$-quasi-convex.
	Fixing an $i\in\lb{1,2},$ for every $x\in [o,z_i]$ and $y\in[z_i,go],$ we have 
	\[d(y,x)\geqslant d(y,\Ax(f_i))-c_2= d(y,z_i)-c_2.\]
	Hence $d(x,z_i)+d(z_i,y)\leqslant d(x,y)+2d(z_i,y)\leqslant 3d(x,y)+2c_2$ is a $(3+2c_2)$-quasi-geodesic.
	
	By Morse lemma, $[o,z_i]\sbs\cN_c([o,go])$ where $c=c(c_1)>0.$ If both $d(z_1,o)$ and $d(z_2,o)$ are larger than $\tau_1,$ we can choose $z_i'\in[o,z_i]$ with $d(o,z_i')=\tau_1.$ Let $w_i\in[o,go]$ with $d(w_i,z_i')\leqslant c,$ then $|d(w_i,o)-\tau_1|\leqslant c.$ Hence $w_1,w_2\in \cN_{3c}(\Ax(f_1))\cap \cN_{3c}(\Ax(f_2)).$ Since $d(w_1,o)\geqslant \tau_1 -c,$ this contradicts Lemma \ref{lem: bounded intersection} for a sufficiently large $\tau_1.$
\end{proof}
\begin{defi}
	Let $g\in\Gamma$ and $\tau>0,$ we call a loxodromic element $f\in\Gamma$ is $\tau$-contracting for $g$ if for every $h\in E(f),$ we have $\pair{go,ho}_{o}\leqslant \tau$ and $\pair{g^{-1}o,ho}_{o}\leqslant \tau.$
\end{defi}

\begin{lem}
	Let $F\sbs\Gamma$ be a finite subset of pairwise independent loxodromic elements with $\# F\geqslant 3.$
	There exists $\tau>0$ such that for every $g\in\Gamma,$ there exists $F_g\sbs F$ with $\# F_g= \# F-2$ such that every element in $F_g$ is $\tau$-contracting for $g.$
\end{lem}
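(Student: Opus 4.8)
The plan is to derive the lemma from the bounded projection estimate of Lemma~\ref{lem: bounded projection} together with the quasiconvexity of the axes $\Ax(f)$, $f\in F$. First I would fix the constants: let $\tau_1>0$ be the maximum of the constants provided by Lemma~\ref{lem: bounded projection} over the finitely many pairs of distinct elements of $F$, and let $c_2\geq 0$ be a common quasiconvexity constant for the finitely many axes $\Ax(f)$, $f\in F$. For $g\in\Gamma$ set $B_g\defeq\lb{f\in F:\ d(o,\pi_{\Ax(f)}(go))>\tau_1}$. If two distinct $f_1,f_2$ both lie in $B_g$, then $\min\{d(o,\pi_{\Ax(f_1)}(go)),d(o,\pi_{\Ax(f_2)}(go))\}>\tau_1$, contradicting Lemma~\ref{lem: bounded projection} applied to the pair $f_1,f_2$ (whose constant is $\leq\tau_1$); hence $\#B_g\leq 1$, and likewise $\#B_{g^{-1}}\leq 1$. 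Since $\#F\geq 3$, one can choose $F_g\sbs F\sm(B_g\cup B_{g^{-1}})$ with $\#F_g=\#F-2$; by construction every $f\in F_g$ satisfies both $d(o,\pi_{\Ax(f)}(go))\leq\tau_1$ and $d(o,\pi_{\Ax(f)}(g^{-1}o))\leq\tau_1$.

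It then remains to show that $d(o,\pi_{\Ax(f)}(go))\leq\tau_1$ (and the same for $g^{-1}$) forces $f$ to be $\tau$-contracting for $g$, for a constant $\tau$ depending only on $F$ and $\delta$. Fix $f\in F_g$ and $h\in E(f)$, so that $ho\in E(f)o=\Ax(f)$, and pick $p\in\pi_{\Ax(f)}(go)\sbs\Ax(f)$ with $d(o,p)\leq\tau_1$ (possible since $X$ is proper). The key point is that the concatenation $[go,p]\cup[p,ho]$ is a $\lambda$-quasi-geodesic from $go$ to $ho$, where $\lambda\defeq\max\{3,2c_2\}$ depends only on $c_2$: indeed, for $a\in[go,p]$ one has $d(a,\Ax(f))\geq d(go,\Ax(f))-d(go,a)=d(a,p)$ because $p$ realizes $d(go,\Ax(f))$, while $[p,ho]\sbs\cN_{c_2}(\Ax(f))$ by quasiconvexity, so $d(a,b)\geq d(a,p)-c_2$ for $a\in[go,p]$, $b\in[p,ho]$, and hence $\ell([a,p]\cup[p,b])=d(a,p)+d(p,b)\leq 3d(a,b)+2c_2$ (this is essentially the computation already appearing in the proof of Lemma~\ref{lem: bounded projection}). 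By the Morse lemma this quasi-geodesic lies in $\cN_{c'}([go,ho])$ for some $c'=c'(\lambda,\delta)$; since $p$ lies on it, $d(p,[go,ho])\leq c'$, whence $d(o,[go,ho])\leq d(o,p)+c'\leq\tau_1+c'$, and finally $\pair{go,ho}_o\leq d(o,[go,ho])\leq\tau_1+c'$. Running the identical argument with $g^{-1}$ in place of $g$ gives $\pair{g^{-1}o,ho}_o\leq\tau_1+c'$. Thus $f$ is $\tau$-contracting for $g$ with $\tau\defeq\tau_1+c'$, which depends only on $F$ and $\delta$ as required.

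The lemma is fairly soft once Lemma~\ref{lem: bounded projection} is available, so I do not expect a serious obstacle; the only points requiring care are keeping every constant ($\tau_1$, $c_2$, and hence $\lambda$, $c'$ and $\tau$) independent of $g$ — which is exactly what the finiteness of $F$ provides — and making the quasi-geodesic estimate for $[go,p]\cup[p,ho]$ precise. If one wished to avoid invoking the quasiconvexity constant of the axes, one could work instead with the cyclic subgroup $\pair{f}\sbs E(f)$, which is at finite Hausdorff distance from $\Ax(f)$ by the definition of $E(f)$, at the cost of an additional additive constant; I would prefer the formulation with the axes since the facts needed are already recorded above.
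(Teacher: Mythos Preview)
Your proposal is correct and follows essentially the same strategy as the paper: use Lemma~\ref{lem: bounded projection} to show that at most one $f\in F$ can have $d(o,\pi_{\Ax(f)}(go))>\tau_1$ (and likewise for $g^{-1}$), then convert the projection bound into a Gromov product bound via quasiconvexity of the axes. The only difference lies in this last conversion. The paper works with the geodesic $[o,ho]$: since $o,ho\in\Ax(f)$ and $\Ax(f)$ is $c_1$-quasiconvex, $[o,ho]\sbs\cN_{c_1}(\Ax(f))$, so $d(go,[o,ho])\geq d(go,\Ax(f))-c_1\geq d(o,go)-\tau_1-c_1$, and then the identity $\pair{go,ho}_o+\pair{o,ho}_{go}=d(o,go)$ together with $\pair{o,ho}_{go}\geq d(go,[o,ho])-10\delta$ gives $\pair{go,ho}_o\leq\tau_1+c_1+10\delta$. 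Your route instead builds the quasi-geodesic $[go,p]\cup[p,ho]$ and invokes the Morse lemma to bring $p$ (and hence $o$) close to $[go,ho]$. Both arguments are valid; the paper's is slightly more elementary in that it avoids the Morse lemma, while yours has the advantage of reusing verbatim the quasi-geodesic computation from the proof of Lemma~\ref{lem: bounded projection}.
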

\begin{proof}
	By the previous lemma, there exists $\tau_1>0$ such that for every $f_1,f_2\in F$ and $ g\in\Gamma,$ we have $\min\lb{d(o,\pi_{\Ax(f_1)}(go)),d(o,\pi_{\Ax(f_2)}(go))}\leqslant \tau_1.$ Note that there exists $c_1>0$ such that for every $f\in F,$ $\Ax(f)$ is $c_1$-quasi-convex. We take $\tau=\tau_1+c_1+10\delta.$
	
	For each $g\in \Gamma$ and $f\in F$ satisfying $d(o,\pi_{\Ax(f)}(go))\leqslant \tau_1.$ For every $h\in E(f),$ we have $[o,ho]\in\cN_{c_1}(\Ax(f)).$ Hence 
	\[d(go,[o,ho])\geqslant d(go,\Ax(f))-c_1\geqslant d(go,o)- d(o,\pi_{\Ax(f)}(go))-c_1\geqslant d(o,go)-\tau_1-c_1.\]
	Then $\pair{o,ho}_{go}\geqslant d(go,[o,ho])-10\delta\geqslant  d(o,go)-\tau_1-c_1-10\delta=d(o,go)-\tau.$ We obtain 
 \[\pair{go,ho}_o=d(o,go)-\pair{o,ho}_{go}\leqslant \tau.\]
	
	To complete the proof, we apply the argument to both $g$ and $g^{-1}.$ By the previous lemma, there are at least $\# F-2$ elements in $F$ which are $\tau$-contracting for $g.$
\end{proof}

Now we are at the stage of proving Proposition \ref{prop: extension lemma}.
\begin{proof}[Proof of Proposition \ref{prop: extension lemma}.]
	We apply the previous lemma to $F$ and obtain a constant $\tau>0.$ Let $C_1=\#F(\#F-1)/2$. By the pigeonhole principle, there exists $F'\sbs F$ with $\#F'=\#F-2$ such that
	\[S'=\lb{s\in S:F_s=F'}\]
	has the cardinality at least $C_1^{-1}\# S,$ where $F_s\sbs F$ is the subset given by the previous lemma consisting of $\tau$-contracting elements. 
	
	By Lemma \ref{lem: CD-chain}, there exists $D=D(\tau)>0$ and $c_1=c_1(\tau)>0$ such that every $(\tau,D)$-chain forms a $c_1$-quasi-geodesic. By Morse Lemma, every $c$-quasi-geodesic $\alpha$ is contained in $\cN_C([\alpha_-,\alpha_+])$ for some $C>0.$ Now we take $C_2=2C,R=4C+1,$ $L_0=D+4C.$ The choice of $m_0$ will be given later. Let $c_2>0$ such that every axis $\Ax(f)$ is $c_2$-quasi-convex for $f\in F.$ Now we show that $\wt S$ freely generates a free semigroup.
	
	Let $k\geqslant 1$ and $s_i\in S',f_i\in F',\varsigma_i\in\lb{m,2m}$ for $1\leqslant i\leqslant k.$ We consider
	\[x_i=s_1f_1^{\varsigma_1}\cdots s_if_i^{\varsigma_i}o,\quad y_i=s_1f_1^{\varsigma_1}\cdots s_i o.\]
	We claim that $x_0,y_1,x_1,\cdots,y_k,x_k$ is a $(\tau,D)$-chain. Assume that $m_0$ is large enough guaranteeing $|f^m|>D$ for every $f\in F$ and $m\geqslant m_0.$ Since $|s_i|=L\geqslant D+4C$ and $|f_i^{\varsigma_i}|\geqslant D$ for each $i,$ the second condition in Definition \ref{def: CD-chain} is verified. Besides, for each $i,$ we have
	\[\pair{x_{i-1},x_i}_{y_i}=\pair{s_i^{-1}o,f_i^{\varsigma_i}o}_o\leqslant \tau,\quad\pair{y_i,y_{i+1}}_{x_i}=\pair{f_i^{-\varsigma_i}o,s_{i+1}o}_o\leqslant \tau \]
	by the $\tau$-contracting property. 
 
 Hence $\alpha =[x_0,y_1][y_1,x_1]\cdots[x_{k-1},y_k][y_k,x_k]$ is a $c_1$-quasi geodesic, which is contained in the $C$-neighborhood of $[\alpha_-,\alpha_+]=[o,x_k].$ Then we have two estimates on the length of $d(o,x_k).$ Firstly, since $|s_i|\geqslant D+4C $ and $|f_i^{\varsigma_i}|\geqslant D,$ we have 
\begin{equation}\label{eqn: (tau,D)-chain}
    d(o,x_k)\geqslant \sum_{i=1}^k(d(x_{i-1},y_i)+d(y_i,x_i))-4kC\geqslant k(2D+2C)-4kC>0.
\end{equation}	
	This implies that $x_k\ne o.$ Besides, we have
	\[d(o,x_k)\geqslant \sum_{i=1}^k d(x_{i-1},x_i)-2kC=\sum_{i=1}^k d(x_{i-1},x_i)-kC_2,\]
	which gives the desired estimate \eqref{eqn: constant cancellation}.
	
	\paragraph{Checking the freeness.}
	We consider two such sequences $s_i, f_i,\varsigma_i$ for $1\leqslant i\leqslant k$ and $s_j',f_j',\varsigma_j'$ for $1\leqslant j\leqslant \ell.$ We get two sequences of points $x_i,y_i$ and $x_j',y_j'$ in $X.$ Assume that $x_k=x_\ell',$ we are going to show that $k=\ell$ and $s_i=s_i',f_i=f_i',$ $\varsigma_i=\varsigma_i'.$ By an inductive argument, it suffices to check for $i=1.$ Let $\alpha$ be a fixed geodesic connecting $o$ and $x_k=x_\ell'.$ Then there exist $z_1,z_1'\in\alpha$ such that $d(y_1,z_1),d(y_1',z_1')\leqslant C.$ Since $d(y_1,o)=d(y_1',o)=L,$ we have $d(z_1,z_1')=|d(o,z_1)-d(o,z_1')|\leqslant 2C.$ This implies that $d(y_1,y_1')\leqslant 4C.$ By the $R$-separation, $y_1=y_1'.$
	
	Now we consider a geodesic $\beta$ connecting $y_1=y_1'$ and $x_k=x_\ell'.$ Without loss of generality, we assume that $d(y_1,x_1)\leqslant d(y_1',x_1').$ 
	Take a point $w\in\pi_\beta(x_1)$ then $d(x_1,w)\leqslant C.$
	Letting $x'\in [y_1',x_1']$ with $d(y_1',x_1')=d(x_1,y_1),$ we have $d(x',w)\leqslant 3C$ since $[y_1',x_1']\in\cN_C(\beta).$ This implies $w\in\cN_{3C}([y_1,x_1])\cap\cN_{3C}([y_1',x_1']).$ 
	By the quasi-convexity of axes, we have $[y_1,x_1]\sbs s_1\cN_{c_2}(\Ax(f_1))$ and $[y_1',x_1']\sbs s_1\cN_{c_2}(\Ax(f_1')).$
	Therefore, both $y_1$ and $w$ are contained in $s_1(\cN_{3C+c_2}(\Ax(f_1))\cap \cN_{3C+c_2}(\Ax(f_1'))).$ 
	Note that $d(y_1,w)\geqslant d(y_1,x_1)-d(x_1,w)\geqslant |f_1^{\varsigma_1}|-C,$ we conclude that 
	\[\diam \cN_{3C+c_2}(\Ax(f_1))\cap \cN_{3C+c_2}(\Ax(f_1'))\geqslant d(y_1,w)\geqslant |f_1^{\varsigma_1}|-C.\]
	By Lemma \ref{lem: bounded intersection}, the $(3C+c_2)$-neighborhoods of axes of different elements in $F$ have uniformly bounded intersections.
	We take $m_0$ sufficiently large at beginning such that $|f^m|-C$ is strictly larger than diameters of all such intersections for every $f\in F,m\geqslant m_0$, this forces $f_1=f_1'.$
	
	Finally, we should show $\varsigma_1=\varsigma_1'.$ Otherwise, we assume that $\varsigma_1=m$ and $\varsigma_1'=2m.$ 
	In this case, we have 
	\[y_2=s_1f_1^ms_2 o,\quad x_1=s_1f_1^m o,\quad x_1'=s_1f_1^{2m}o ,\quad y_2'=s_1f_1^{2m}s_2'o.\] 
	By the $\tau$-contracting property, these points form a $(\tau,D)$-chain. This leads to that
	\[x_k,y_k,\cdots,y_2,x_1,x_1',y_2',\cdots,y_\ell',x_\ell'.\]
	is also a $(\tau,D)$-chain. Applying the estimate in \eqref{eqn: (tau,D)-chain}, this contradicts $x_k=x_\ell'.$
\end{proof}

\subsection{Proof of Proposition \ref{prop: free semigroup}.}
To prove Proposition \ref{prop: free semigroup}, we need to find a finite subset consisting of pairwise independent loxodromic elements satisfying the desired condition on Zariski closures.

\begin{lem}\label{lem: independent elements}
	Let $\Gamma$ be a non-elementary torsion-free hyperbolic group and $\rho: \Gamma\to\SL_n(\RR)$ be a faithful representation. Let $\bG$ be the Zariski closure of $\rho(\Gamma)$ and assume $\bG$ is Zariski connected. For every $k\geqslant 0,$ there exists a finite subset $F\sbs\Gamma$ consisting of pairwise independent nontrivial elements such that for every subset $F'\sbs F$ with $\# F'\geqslant F-k,$ $\rho(F')$ generates a semigroup whose Zarski closure is $\bG.$
\end{lem}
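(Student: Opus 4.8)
The plan is to build $F$ as a disjoint union of $k+1$ ``blocks'' $B_0,\dots,B_k$, each of which separately generates a Zariski dense subgroup, and to choose the blocks generically so that all of $F$ consists of pairwise independent loxodromic elements. The starting reduction is algebraic: for a finitely generated subsemigroup $\Sigma$ of a linear algebraic group, the Zariski closure $\ol\Sigma$ is automatically a subgroup, since a Zariski closed subsemigroup of an algebraic group is a subgroup — apply the descending chain argument to $\ol\Sigma\supseteq s\ol\Sigma\supseteq s^2\ol\Sigma\supseteq\cdots$ for $s\in\ol\Sigma$ to get $s^n\ol\Sigma=\ol\Sigma$, hence $1,s^{-1}\in\ol\Sigma$. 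Thus $\ol\Sigma=\ol{\langle\Sigma\rangle}$, so ``$\rho(F')$ generates a semigroup whose Zariski closure is $\bG$'' is equivalent to ``$\langle\rho(F')\rangle$ is Zariski dense in $\bG$'', and it suffices to arrange that $\langle\rho(F')\rangle$ is Zariski dense for every $F'\subseteq F$ with $\#F'\geq\#F-k$.

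Next, fix a finite generating set $\cS=\{s_1,\dots,s_m\}$ of $\Gamma$ with $1\notin\cS$ (so $m\geq 2$, as $\Gamma$ is non-elementary); then $\langle\rho(\cS)\rangle=\rho(\Gamma)$ is Zariski dense in $\bG$ by definition of $\bG$. Given loxodromic elements $h_0,\dots,h_k\in\Gamma$ still to be chosen, set $B_j\defeq\{h_j\}\cup\{s_ih_j:1\leq i\leq m\}$ and $F\defeq\bigcup_{j=0}^k B_j$. Since $(s_ih_j)h_j^{-1}=s_i$, the subgroup $\langle\rho(B_j)\rangle$ contains every $\rho(s_i)$, hence all of $\rho(\Gamma)$, so it is Zariski dense. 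Provided the $B_j$ are pairwise disjoint, any $F'\subseteq F$ with $\#F'\geq\#F-k$ omits at most $k$ elements and therefore, there being $k+1$ disjoint blocks, contains at least one $B_j$ entirely; then $\langle\rho(F')\rangle\supseteq\langle\rho(B_j)\rangle$ is Zariski dense. (Automatically $\#F=(k+1)(m+1)\geq 3$.) This would finish the proof, modulo the choice of the $h_j$.

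It remains to choose $h_0,\dots,h_k$ so that all elements of $F$ are nontrivial (equivalently loxodromic, as $\Gamma$ is torsion-free), pairwise independent, and the $B_j$ pairwise disjoint; I would do this inductively in $j$, the point being that each obstruction confines $h_j$ to a ``thin'' set. I will use the standard structure theory of elementary subgroups of a torsion-free hyperbolic group: $E(g)$ is the unique maximal infinite cyclic subgroup containing a nontrivial $g$; the stabilizer in $\Gamma$ of a boundary point that is a fixed point of some loxodromic element is itself a maximal infinite cyclic subgroup; and if $E(x)=E(y)$ then $x$ and $y$ share a fixed point $\xi\in\partial\Gamma$. Now if $E(ah)=E(bh)$ for elements $a\neq b$ of $\{1,s_1,\dots,s_m\}$, then $ab^{-1}\neq 1$ fixes a point $\xi\in\partial\Gamma$ fixed also by $ah$, so $\Stab(\xi)=E(ab^{-1})$ and $ah\in\Stab(\xi)$, forcing $h\in a^{-1}E(ab^{-1})$ — a single coset of an infinite cyclic subgroup. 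Similarly, the requirement that $E(ah_j)$ avoid a prescribed maximal cyclic group $E$ (arising from an already chosen $h_{j'}$, $j'<j$) excludes a single coset of an infinite cyclic subgroup, while nontriviality ($s_ih_j\neq 1$) and disjointness of the $B_j$ exclude finitely many individual elements. Hence the set of ``bad'' values of $h_j$ lies in a finite union of cosets of infinite cyclic subgroups together with a finite set; since $\Gamma$ is non-elementary hyperbolic, no infinite cyclic subgroup has finite index, so by B.~H.~Neumann's covering lemma $\Gamma$ is not contained in such a set, and a good loxodromic $h_j$ exists at each step. The main obstacle is precisely this genericity step: converting each failure of independence into membership in a coset of a cyclic subgroup via the structure of elementary subgroups of torsion-free hyperbolic groups, and then invoking Neumann's lemma to rule out a finite covering.
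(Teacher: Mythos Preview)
Your proof is correct and takes a genuinely different route from the paper's. The paper argues by induction on $k$: at each step it invokes \cite[Lemma 3.6]{morris_variational_2023} to produce a finite subset of $\Gamma\setminus\bigcup_{f\in F_{k-1}}E(f)$ whose image generates a Zariski dense semigroup, and appends it to $F_{k-1}$; the pigeonhole principle then guarantees that removing $k$ elements either leaves $F_{k-1}$ with at most $k-1$ deletions (handled by induction) or leaves the new block intact. Your construction is more direct and self-contained: you build $k+1$ explicit blocks $B_j=\{h_j,s_1h_j,\dots,s_mh_j\}$, each of which trivially generates $\rho(\Gamma)$ as a \emph{group} (using your clean reduction from semigroup to group Zariski closures), and you secure pairwise independence by a genericity argument via Neumann's covering lemma rather than by quoting an external Zariski-density lemma. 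The paper's approach is perhaps more conceptually modular (and reuses machinery already needed elsewhere in \cite{morris_variational_2023}), while yours gives an explicit $F$ of size $(k+1)(m+1)$ and avoids any black-box input beyond standard structure theory of elementary subgroups in torsion-free hyperbolic groups. One small remark: in your within-block step, the cleanest way to see that $ab^{-1}$ fixes the same boundary point as $ah$ is simply that $ab^{-1}=(ah)(bh)^{-1}\in E(ah)$ when $E(ah)=E(bh)$; your boundary phrasing is fine but this algebraic one-liner makes the coset conclusion immediate.
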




\begin{proof}

We will find $F=F_k$ for each $k\geqslant 0$ by an induction on $k.$

For the case of $k=0,$ applying \cite[Lemma 3.6]{morris_variational_2023}, we can find a finite subset $F_0\sbs \Gamma$ such that $\rho(F_0)$ generates a semigroup whose Zariski closure is $\bG.$ Furthermore, we can assume that elements in $F_0$ are pairwise independent by the following process. If $f_1,f_2\in F_0$ are not independent, then there exists $f\in \Gamma$ and $m_1,m_2\in\ZZ$ such that $f_i=f^{m_i}$ since $\Gamma$ is torsion free. We can remove $f_1,f_2$ and add $f$ in $F_0.$

Assume that $F_{k-1}$ is found. Recall that for every $f\in\Gamma,$ $E(f)$ is a cyclic group containing all elements which are not independent of $f.$
Notice that Zariski closure of $\rho(E(f))$ is commutative. 
On the other hand, applying Tits alternative for hyperbolic groups, $\Gamma$ contains a nonabelian free subgroup. This implies that $\rho(E(f))$ is not Zariski dense in $\bG.$ By the connectivity of $\bG,$ the set $S=\Gamma\sm\bigcup_{f\in F_{k-1}} E(f)$ is Zariski dense in $\bG.$ Applying \cite[Lemma 3.6]{morris_variational_2023} and the argument for $k=0$ case, we can find a finite subset $\wt F\sbs S$ consisting of pairwise independent elements such that $\rho(\wt F)$ generates a semigroup whose Zariski closure is $\bG.$ We take $F_k=F_{k-1}\cup \wt F,$ which gives a desired construction for the $k$-case.
\end{proof}

\begin{proof}[Proof of \cref{prop: free semigroup}]
     Applying Lemma \ref{lem: independent elements} to the case of $k=2,$ we can find a finite subset $F\sbs\Gamma$ consisting of pairwise independent elements such that for every $F'\sbs F$ with $\#F'=\#F-2,$ $\rho(F')$ generates a semigroup whose Zariski closure is $\bG.$ We apply Proposition \ref{prop: extension lemma} to $F.$ Let $m_0$ be the constant given by this proposition.
    In order to find $m\geqslant m_0$ satisfying the first condition in the proposition, we need the following lemma.
	\begin{lem}\label{lem: Zariski closure 2}
		For every $g\in \SL_n(\R),$ there exists a positive integer $\ell$ such that for every $m$ coprime with $\ell,$ the Zariski closure of $\pair{g^m}$ is equal to the Zariski closure of $\pair{g}.$
	\end{lem}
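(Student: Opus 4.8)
The plan is to take $\ell$ to be the index $N := [\bH : \bH^0]$ of the identity component of $\bH := \overline{\langle g\rangle}$, the Zariski closure of the cyclic group generated by $g$. Since $\bH$ is a linear algebraic group this index is finite, and since $\langle g\rangle$ is Zariski dense in $\bH$, its image in the finite quotient $\bH/\bH^0$ exhausts $\bH/\bH^0$; hence $\bH/\bH^0$ is cyclic of order $N$, generated by the image of $g$. Throughout I will take $m$ to be a positive integer, which is the only case needed in the application (the proof of Proposition~\ref{prop: free semigroup}, where $m\geqslant m_0$).

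The heart of the matter is the elementary observation that $\overline{\langle g^m\rangle} \supseteq \bH^0$ for every $m \geqslant 1$. To see this, write $\bK := \overline{\langle g^m\rangle}$; since every power $g^t$ can be written as $g^i(g^m)^q$ with $0\leqslant i<m$ and $q\in\ZZ$, we get $\langle g\rangle \subseteq \bigcup_{i=0}^{m-1} g^i \bK$, and as a finite union of Zariski-closed sets is closed, this yields $\bH \subseteq \bigcup_{i=0}^{m-1} g^i\bK$. Intersecting with $\bH^0$ and using that $\bH^0$ is irreducible, one of the closed subsets $g^i\bK \cap \bH^0$ must be all of $\bH^0$, i.e. $\bH^0 \subseteq g^{i_0}\bK$ for some $i_0$; since $e \in \bH^0$ this forces $g^{i_0} \in \bK$, whence $g^{i_0}\bK = \bK$ and therefore $\bH^0 \subseteq \bK$.

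Granting this, the lemma follows immediately: if $\gcd(m,\ell) = \gcd(m,N) = 1$, then the image of $g^m$ generates the cyclic group $\bH/\bH^0$, so $\langle g^m\rangle \cdot \bH^0 = \bH$, hence $\overline{\langle g^m\rangle}\cdot\bH^0 = \bH$; combined with the inclusion $\bH^0 \subseteq \overline{\langle g^m\rangle}$ from the previous paragraph, and with $\overline{\langle g^m\rangle}\subseteq\bH$, we conclude $\overline{\langle g^m\rangle} = \bH = \overline{\langle g\rangle}$. I do not anticipate any real difficulty here: the only facts invoked beyond this bookkeeping are the standard statements that the identity component of a linear algebraic group has finite index and is irreducible, together with the fact that the Zariski closure of a subgroup is again a (closed) subgroup.
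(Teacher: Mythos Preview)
Your proof is correct and follows essentially the same approach as the paper: both take $\ell$ to be the number of connected components of $\bH=\overline{\langle g\rangle}$ and use that $\bH/\bH^0$ is cyclic generated by the image of $g$, so coprimality of $m$ with $\ell$ forces $\langle g^m\rangle$ to hit every component. The paper simply asserts that $\bH'=\overline{\langle g^m\rangle}$ is a finite index closed subgroup of $\bH$ (hence a union of components), whereas you supply the explicit irreducibility argument showing $\bH^0\subseteq\bH'$; this is a welcome clarification but not a genuinely different route.
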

	\begin{proof}
		Let $\bH$ be the Zariski closure of $\pair{g}.$ Let $\ell$ be the number of connected components of $\bH.$ Let $\bH'$ be the Zariski closure of $\pair{g^m}$ where $m$ is coprime with $\ell.$ Then $\bH'$ is a finite index algebraic subgroup of $\bH$ and hence a union of some connected components of $\bH.$ Note that the action of $\pair{g}$ given by left translation is transitive among connected components of $\bH$ and so does $\pair{g^m}$, due to $m$ coprime with $\ell$. Hence $\bH'=\bH.$
	\end{proof}
    We take $m\geqslant m_0$ to be a sufficiently large prime number such that for every $f\in F,$ the Zariski closure of $\pair{\rho(f)^m}$ equals to the Zariski closure of $\pair{\rho(f)}.$ Then for every $F'\sbs F$ with $\#F'=\# F-2,$ the Zariski closure of the semigroup generated by $\lb{\rho(f)^m:f\in F'}$ equals to that of $\rho(F),$ which is $\bG.$ The first condition holds.

    Note that for every finite subset $S\sbs \Gamma,$ it contains an $R$-separated subset of cardinality at least $(\#\cS+1)^{-R}\#S,$ where $\cS$ is the symmetric generating set of $\Gamma.$ Then the last two conditions follow from Proposition \ref{prop: extension lemma} by enlarging $C_1$ suitably.
\end{proof}

\section{Variational principle for Anosov representations 
}\label{sec:limitset}

In this section, we will show a variational principle for dimensions of limit sets of Borel Anosov representations. Let $\Gamma$ be a finitely generated hyperbolic group with a fixed finite symmetric generating set $\cS\sbs\Gamma.$ Let $\rho:\Gamma\to \SL_n(\RR)$ be an irreducible Borel Anosov representation. 
Recall that $\kappa(\rho(\gamma))=\diag(\log\sigma_1(\rho(\gamma)),\cdots,\log\sigma_n(\rho(\gamma)))$ is the Cartan projection of $\rho(\gamma).$

\subsection{The variational principle to positive linear functions on \texorpdfstring{$\frak a^+$}{a\^{}+}}
Recall the Cartan algebra $\frak a=\{\lambda=\diag(\lambda_1,\cdots, \lambda_n):\lambda_i\in\R,
\ \sum\lambda_i=0\} $ and a positive Weyl chamber $\frak a^+$ mentioned in Section \ref{subsec: actions}. Let $\psi$ be a linear function on $\frak a$ which is positive with respect to $\frak a^+.$ Specifically, $\psi$ can be expressed as
\[\psi(\lambda)=\sum_{i=1}^{n-1}a_i\cdot \alpha_i(\lambda),\]
where $a_1,\cdots,a_{n-1}\geqslant 0$ are not all zero and $\alpha_i(\lambda)=\lambda_i-\lambda_{i+1}$ are simple roots.

Recall that for a finitely supported probability measure $\nu$ on $\SL_n(\RR),$ the Lyapunov spectrum of $\nu$ is $\lambda(\nu)=(\lambda_1(\nu),\cdots,\lambda_n(\nu)).$ We also view $\lambda(\nu)$ as an element in $\frak a^+$ using the isomorphism $\frak a\cong \RR^n$ and $\psi$ acts on $\lambda(\nu).$ Then $\psi(\lambda(\nu))$ is a nonnegative number.

Throughout this subsection, we further assume that $\Gamma$ is torsion-free and $\rho$ is faithful. Note that $\Gamma$ is non-cyclic and hence non-elementary since $\rho$ is irreducible. 


\begin{prop}\label{prop: series variation}
	Let $\bG$ be the Zariski closure of $\rho(\Gamma)$ which is assumed to be Zariski connected. Let $\psi$ be a linear function as above. If the series $\sum_{\gamma\in\Gamma} \exp(-\psi(\kappa(\rho(\gamma))))$ diverges, then there exists $c>0$ such that the following holds. For every $\ve>0,$ there exists infinitely many positive integers $N$ with a finitely supported probability measure $\nu$ on $\rho(\Gamma)$ such that
	\begin{itemize}
		\item $G_\nu$ is Zariski dense in $\bG.$
		\item $\lambda_p(\nu)-\lambda_{p+1}(\nu)\geqslant cN$ for every $p=1,\cdots,n-1 .$
		\item $h_{\mr{RW}}(\nu)\geqslant (1-\ve)N$ and $\psi(\lambda(\nu))\leqslant (1+\ve)N.$
	\end{itemize}
\end{prop}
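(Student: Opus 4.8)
The plan is to construct $\nu$ as a uniform measure on a carefully chosen finite set $\wt S\subset\rho(\Gamma)$ obtained from \cref{prop: free semigroup}, where the seed set $S$ is an annulus-level set carrying most of the divergence of the Poincaré series. First I would observe that since $\sum_{\gamma}\exp(-\psi(\kappa(\rho(\gamma))))$ diverges and $\psi$ is comparable to $\max_i \alpha_i$, while $\kappa(\rho(\gamma))$ grows linearly in $|\gamma|$ by the Borel Anosov inequality, the quantity $\psi(\kappa(\rho(\gamma)))$ is comparable to $|\gamma|$; hence the divergence forces, for every $\ve>0$ and infinitely many $L$, the annulus $A(L)$ to contain a subset $S$ with $\#S \geq \exp((1-\ve/2)\,\Theta L)$ for an appropriate rate $\Theta$ tied to $\psi$ (a standard dyadic-pigeonhole argument on the divergent series, grouping $\gamma$ by $|\gamma|$ and by the value of $\kappa(\rho(\gamma))$ up to bounded error using that $\kappa$ takes values within $O(1)$ of a lattice). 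I would normalize by choosing $N$ proportional to $L$ so that $\psi(\kappa(\rho(s)))\le(1+\ve/3)N$ for all $s\in S$.

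Next I would feed $S$ (after the automatic $R$-separation loss, absorbed into constants) into \cref{prop: free semigroup}: this yields $S'\subset S$ with $\#S'\geq C_1^{-1}\#S$, a subset $F'\subset F$ with $\{\rho(f)^m:f\in F'\}$ generating a semigroup with Zariski closure $\bG$, and the set $\wt S=\{sf^\varsigma: s\in S', f\in F', \varsigma=m,2m\}$ which \emph{freely} generates a free semigroup and satisfies the cancellation bound $|\wt s_1\cdots\wt s_k|\geq \sum|\wt s_i|-kC_2$. Let $\nu$ be the uniform probability measure on $\rho(\wt S)$. Then $G_\nu$ contains the semigroup generated by $\{\rho(f)^{2m}\rho(f)^{-m}\cdots\}$-type products — more precisely, $\wt S$ contains elements $sf^m$ and $sf^{2m}$, so $G_\nu$ contains $(sf^m)^{-1}(sf^{2m})=f^m$ for each $f\in F'$; hence $G_\nu\supseteq\langle \rho(f)^m: f\in F'\rangle$, whose Zariski closure is $\bG$, giving the first bullet. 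Because $\wt S$ freely generates a free semigroup, $h_{\mr{RW}}(\nu)=H(\nu)=\log\#\wt S=\log(2\,\#F'\,\#S')\geq \log\#S - O(1)\geq (1-\ve)N$ for $N$ large, giving the entropy lower bound in the third bullet.

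The remaining point — and the main obstacle — is controlling the Lyapunov spectrum: I need $\lambda_p(\nu)-\lambda_{p+1}(\nu)\geq cN$ for all $p$, and $\psi(\lambda(\nu))\leq(1+\ve)N$. For the upper bound on $\psi(\lambda(\nu))$ I would use that $\lambda(\nu)=\lim_k \frac1k\E[\kappa(\rho(\gamma_1\cdots\gamma_k))]$ and that, by subadditivity of $\kappa$ under the weak order together with the cancellation estimate \eqref{eqn: constant cancellation} controlling word lengths (and the Borel Anosov comparison $\psi(\kappa(\rho(\gamma)))\le C'|\gamma|+C'$ in \emph{both} directions, i.e.\ $\psi\circ\kappa\circ\rho$ is within $O(1)$ of a fixed multiple of word length along these almost-geodesic products), $\psi(\lambda(\nu)) \le \E_{\wt s\sim\nu}\psi(\kappa(\rho(\wt s))) + o(N)$; each generator $\wt s=sf^\varsigma$ has $\psi(\kappa(\rho(\wt s)))\le \psi(\kappa(\rho(s)))+\psi(\kappa(\rho(f^\varsigma)))+O(1)\le (1+\ve/3)N + O(1)$ since $|f^\varsigma|\le 2m|f|$ is a bounded contribution relative to $N$. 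For the lower bound $\chi_p(\nu)\geq cN$: since $\rho$ is Borel Anosov, $\sigma_p/\sigma_{p+1}$ of every $\rho(\gamma)$ grows like $e^{c|\gamma|}$; combined with the near-geodesic cancellation property of products of elements of $\wt S$ (each $\wt s$ has word length $\asymp L\asymp N$), one gets $\E[\kappa_p-\kappa_{p+1}$ of a length-$k$ product$] \gtrsim kN$, hence $\chi_p(\nu)\gtrsim N$; here I must be careful that the relevant gap estimate passes through products, which is where I would invoke the Anosov property at the level of the whole semigroup generated by $\wt S$ (using \eqref{eqn: constant cancellation} so these products are uniform quasigeodesics in $\Gamma$ and the Anosov inequality applies directly). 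Finally, I would remark that $G_\nu$ being Zariski dense in $\bG$ legitimizes speaking of the simple Lyapunov spectrum and the unique stationary measure, though the proposition as stated only asserts the three bulleted estimates. The delicate step is making the $o(N)$ error terms in the Lyapunov-vs-expected-Cartan comparison genuinely uniform, which relies essentially on the linear cancellation bound in \cref{prop: free semigroup}(3) rather than on any finer regularity.
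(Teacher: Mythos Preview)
Your overall architecture matches the paper's proof: pick a large level set of $\psi\circ\kappa\circ\rho$, pigeonhole onto a single annulus $A(L)$, feed it into \cref{prop: free semigroup}, take $\nu$ uniform on $\rho(\wt S)$, and read off Zariski density via $\rho(f)^m=\rho(sf^m)^{-1}\rho(sf^{2m})$, the entropy from freeness, and the Lyapunov gap from the Anosov inequality applied to the quasi-geodesic products guaranteed by \eqref{eqn: constant cancellation2}. All of that is correct and essentially identical to the paper. (A cosmetic difference: the paper fixes $N$ as the $\psi$-level first and then locates $L\in[c_3N,c_2^{-1}N]$, rather than introducing an auxiliary rate $\Theta$; your extra grouping ``by the value of $\kappa$'' is unnecessary.)

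The one genuine gap is in your upper bound $\psi(\lambda(\nu))\leqslant (1+\ve)N$. You invoke ``subadditivity of $\kappa$ under the weak order'' together with the claim that ``$\psi\circ\kappa\circ\rho$ is within $O(1)$ of a fixed multiple of word length along these almost-geodesic products.'' Neither ingredient does the job: subadditivity of $\kappa$ in the dominance order does not control a general positive root combination from above, and even along a geodesic the ratio $\psi(\kappa(\rho(\gamma)))/|\gamma|$ is only trapped between two positive constants, not within $O(1)$ of one constant. The paper closes this step with \cref{lem: almost additivity} (from Bochi--Potrie--Sambarino): because \eqref{eqn: constant cancellation2} makes every subword $\wt s_\ell\cdots\wt s_m$ satisfy the uniform Anosov gap $\sigma_p/\sigma_{p+1}\geqslant c\,e^{c(m-\ell)}$, each $\log\sigma_p$ is additive along $\wt s_1\cdots\wt s_k$ up to an error $O(k)$, whence $\bigl|\psi(\kappa(\rho(\wt s_1\cdots\wt s_k)))-\sum_i\psi(\kappa(\rho(\wt s_i)))\bigr|\leqslant Ck$. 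Since each $\psi(\kappa(\rho(\wt s_i)))\leqslant N+O(1)$, dividing by $k$ and letting $k\to\infty$ gives $\psi(\lambda(\nu))\leqslant (1+\ve)N$. You already verify the hypothesis of \cref{lem: almost additivity} when bounding $\chi_p(\nu)$ from below; you just need to invoke it again here rather than appealing to dominance-order subadditivity.
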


We first recall an estimate on the lost of singular values under composition. The following lemma is a direct consequence of combining Lemmas 2.5 and A.7 in \cite{bochi_anosov_2019}.
\begin{lem}\label{lem: almost additivity}
	Let $1\leqslant p\leqslant n.$ Given $c>0,$ then there exists $\delta>0$ such that the following holds. Let $(\gamma_k)_{k\in\NN}\in\cS^\NN$ satisfying for every $\ell\leqslant m,$ 
	\[\frac{\sigma_p(\rho(\gamma_{\ell+1}\cdots\gamma_m))}{\sigma_{p+1}(\rho(\gamma_{\ell+1}\cdots\gamma_m)) }\geqslant c\cdot e^{c(m-\ell)}.\]
	Then for every $\ell\leqslant k\leqslant m,$ we have
	\[\sigma_p(\rho(\gamma_{\ell+1}\cdots \gamma_m))\geqslant \delta \cdot \sigma_p(\rho(\gamma_{\ell+1}\cdots \gamma_k))\sigma_p(\rho(\gamma_{k+1}\cdots \gamma_m)) ,\]
	\[\sigma_{p+1}(\rho(\gamma_{\ell+1}\cdots \gamma_m))\leqslant \delta^{-1} \cdot \sigma_{p+1}(\rho(\gamma_{\ell+1}\cdots \gamma_k))\sigma_{p+1}(\rho(\gamma_{k+1}\cdots \gamma_m)) .\]
\end{lem}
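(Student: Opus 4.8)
The plan is to strip the statement down to a single almost‑multiplicativity estimate for exterior‑power norms and then to quote the two cited lemmas of Bochi–Potrie–Sambarino. Write $g_{a,b}\defeq\rho(\gamma_{a+1}\cdots\gamma_b)$ for $a\le b$, so that $g_{\ell,m}=g_{\ell,k}\,g_{k,m}$, and recall that $\|\wedge^q h\|=\sigma_1(h)\cdots\sigma_q(h)$ for every $h\in\SL_n(\R)$, whence $\sigma_p(h)=\|\wedge^p h\|/\|\wedge^{p-1}h\|$ and, for $p\le n-1$, $\sigma_{p+1}(h)=\|\wedge^{p+1}h\|/\|\wedge^p h\|$ (with $\wedge^0 h=1$ and, for $p=n$, $\|\wedge^n h\|=|\det h|=1$). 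Exterior‑power norms are submultiplicative. Hence, once we produce a constant $\delta_0>0$ with
\[
\|\wedge^p g_{\ell,m}\|\ \ge\ \delta_0\,\|\wedge^p g_{\ell,k}\|\cdot\|\wedge^p g_{k,m}\|\qquad(\ell\le k\le m),
\]
dividing by the ($\le$) submultiplicative bound for $\wedge^{p-1}$ yields $\sigma_p(g_{\ell,m})\ge\delta_0\,\sigma_p(g_{\ell,k})\sigma_p(g_{k,m})$, and dividing the submultiplicative bound for $\wedge^{p+1}$ by the above yields $\sigma_{p+1}(g_{\ell,m})\le\delta_0^{-1}\sigma_{p+1}(g_{\ell,k})\sigma_{p+1}(g_{k,m})$. (For $p=n$ the first inequality is automatic with $\delta_0=1$ and the second is vacuous.) So it suffices to prove the displayed $\wedge^p$‑estimate for $1\le p\le n-1$.

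To prove it I would push the top singular direction through the product. Since $\wedge^p g_{\ell,m}=(\wedge^p g_{\ell,k})(\wedge^p g_{k,m})$, evaluating the operator norm on the top right singular vector of $\wedge^p g_{k,m}$ gives
\[
\|\wedge^p g_{\ell,m}\|\ \ge\ \|\wedge^p g_{k,m}\|\cdot\bigl\|(\wedge^p g_{\ell,k})\,\hat v\bigr\|,
\]
where $\hat v$ is the unit decomposable $p$‑vector spanning the attracting $p$‑plane $U^+_p(g_{k,m})$, i.e. the span of the top $p$ left singular directions of $g_{k,m}$ (a subspace of the common domain of $g_{\ell,k}$). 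The elementary fact that a linear map expands the $p$‑volume of a $p$‑plane $V$ by almost its maximal factor $\|\wedge^p(\cdot)\|$ as long as $V$ stays uniformly transverse to the map's most‑contracted $(n-p)$‑plane — which is the content of \cite[Lemma A.7]{bochi_anosov_2019} — then gives
\[
\bigl\|(\wedge^p g_{\ell,k})\,\hat v\bigr\|\ \ge\ \|\wedge^p g_{\ell,k}\|\cdot\alpha\bigl(U^+_p(g_{k,m}),\,U^-_{n-p}(g_{\ell,k})\bigr),
\]
where $\alpha(\cdot,\cdot)\in(0,1]$ is a transversality coefficient between a $p$‑plane and an $(n-p)$‑plane and $U^-_{n-p}(g_{\ell,k})$ is the most‑contracted $(n-p)$‑plane of $g_{\ell,k}$. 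So the whole lemma reduces to a uniform lower bound on $\alpha\bigl(U^+_p(g_{k,m}),U^-_{n-p}(g_{\ell,k})\bigr)$ over all admissible triples $\ell\le k\le m$.

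This I would obtain by a dichotomy. Put $c_0=\min_{s\in\cS}\sigma_n(\rho(s))\le 1$ and $C_0=\max_{s\in\cS}\|\rho(s)\|\ge1$. If $k-\ell\le N_0$, for a constant $N_0$ fixed below, then $g_{\ell,k}$ is bounded, $\|g_{\ell,k}\|\le C_0^{N_0}$ and $\|g_{\ell,k}^{-1}\|\le c_0^{-N_0}$; combining $\sigma_p(g_{k,m})=\sigma_p(g_{\ell,k}^{-1}g_{\ell,m})\le\|g_{\ell,k}^{-1}\|\,\sigma_p(g_{\ell,m})$ with $\sigma_p(g_{\ell,k})\le\|g_{\ell,k}\|$ already gives the first conclusion with $\delta=(c_0/C_0)^{N_0}$, and a symmetric manipulation gives the second; the case $m-k\le N_0$ is identical. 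So assume $k-\ell>N_0$ and $m-k>N_0$. Now the standing hypothesis says every sub‑word of $\gamma_{\ell+1}\cdots\gamma_m$ has an exponentially large $p$‑th singular gap, i.e. the finite sequence $\gamma_{\ell+1},\dots,\gamma_m$ is $p$‑dominated with uniform constants; by the Bochi–Potrie–Sambarino theory — this is where \cite[Lemma 2.5]{bochi_anosov_2019} and the accompanying exponential‑stability estimates for the Cartan attracting and repelling subspaces enter — the plane $U^+_p(g_{k,m'})$ stabilises exponentially fast as $m'$ grows past $k$, the plane $U^-_{n-p}(g_{\ell',k})$ stabilises exponentially fast as $\ell'$ decreases below $k$, and the resulting attracting $p$‑plane $\mathcal U_k$ and repelling $(n-p)$‑plane $\mathcal S_k$ satisfy $\mathcal U_k\oplus\mathcal S_k=\R^n$ with $\alpha(\mathcal U_k,\mathcal S_k)\ge\theta_1$ for a constant $\theta_1>0$ depending only on the hypothesis constant $c$, on $n$ and on $\cS$. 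Consequently $U^+_p(g_{k,m})$ lies within $O(e^{-cN_0})$ of $\mathcal U_k$ and $U^-_{n-p}(g_{\ell,k})$ within $O(e^{-cN_0})$ of $\mathcal S_k$; fixing $N_0$ large enough that these perturbations cost less than $\theta_1/2$ in $\alpha$ gives $\alpha\bigl(U^+_p(g_{k,m}),U^-_{n-p}(g_{\ell,k})\bigr)\ge\theta_1/2$, hence the displayed $\wedge^p$‑estimate with some $\delta_0>0$. Taking $\delta=\min\bigl(\delta_0,(c_0/C_0)^{N_0}\bigr)$ completes the proof.

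The substantive point — the only one not reducible to elementary exterior algebra and the treatment of bounded words — is the uniform transversality invoked in the last step: that an exponentially growing gap $\sigma_p/\sigma_{p+1}$ along every sub‑word forces a dominated splitting whose attracting and repelling subspaces are complementary with an angle bounded below independently of the word and of its length (equivalently, that the Cartan subspaces stabilise at a uniform exponential rate without degenerating as the word grows). This is precisely the Bochi–Potrie–Sambarino input cited in the lemma, so in the write‑up I would simply state their Lemmas 2.5 and A.7 and assemble them as above rather than reprove the domination estimates.
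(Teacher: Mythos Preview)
Your proposal is correct and follows exactly the route the paper indicates: the paper does not give a proof at all but simply records that the lemma ``is a direct consequence of combining Lemmas 2.5 and A.7 in \cite{bochi_anosov_2019}'', and your write-up spells out precisely that combination --- reduce to almost-multiplicativity of $\|\wedge^p(\cdot)\|$, use Lemma~A.7 to convert this into a transversality bound between $U_p^+(g_{k,m})$ and $U_{n-p}^-(g_{\ell,k})$, and use the domination theory (Lemma~2.5) to supply that bound. One cosmetic slip: when you write ``evaluating the operator norm on the top right singular vector of $\wedge^p g_{k,m}$'' and then identify $\hat v$ with a unit vector spanning $U_p^+(g_{k,m})$, the latter is the \emph{image} (left singular) direction, obtained after applying $\wedge^p g_{k,m}$ to the former; the computation is right, only the labeling of $\hat v$ should be tidied.
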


\begin{proof}[Proof of Proposition \ref{prop: series variation}]

    Applying Proposition \ref{prop: free semigroup}, we obtain a finite subset $F\sbs\Gamma$ with $\# F\geqslant 3,$ constants $C_1,C_2,L_0>0$ and a positive integer $m.$ 
    For every $\ve>0$ sufficiently small, there are infinitely many integers $N$ such that 
	\[S_1=\lb{\gamma\in\Gamma:\psi(\kappa(\rho(\gamma)))\leqslant N} \]
	has cardinality at least $e^{(1-\ve )N}$ due to the divergence of the series. Since $\psi$ is positive, we have $\psi(\kappa(\rho(\gamma)))\geqslant c_1 (\log\sigma_p(\rho(\gamma))-\log\sigma_{p+1}(\rho(\gamma)))$ for some $1\leqslant p\leqslant n-1$ and $c_1>0.$ Because $\rho$ is Borel Anosov, there exists $c_2>0$ such that for $|\gamma|$ large enough,
	\[\psi(\kappa(\rho(\gamma)))\geqslant c_1 (\log\sigma_p(\rho(\gamma))-\log\sigma_{p+1}(\rho(\gamma)))\geqslant c_2|\gamma |.\]
	Hence $S_1\sbs\lb{\gamma\in\Gamma:|\gamma|\leqslant c_2^{-1}N}.$ Let $c_3=(2\#\log\cS)^{-1},$ then 
	\[\#\lb{\gamma\in\Gamma:|\gamma|\leqslant c_3N}\leqslant 2(\#\cS)^{c_3N}\leqslant \frac{1}{2} e^{(1-\ve)N}\]
	providing $\ve$ small and $N$ large. Hence there exists $c_3N\leqslant L\leqslant c_2^{-1}N$ such that
	\[S_2\defeq\lb{\gamma\in S_1: |\gamma|=L}\]
	has cardinality at least $c_2(2N)^{-1}e^{(1-\ve)N}\geqslant e^{(1-2\ve)N}$ assuming $N$ large. 
	By Proposition \ref{prop: free semigroup} , there exists $ S_3\sbs  S_2$ and $F'\sbs F$ with $\# S_3\geqslant C_1^{-1}\# S_2$ and $\#F'=\#F-2$ such that
	\[\wt S\defeq\lb{sf^\varsigma :s\in S_3,f\in F',\varsigma=m,2m }\]
	freely generates a free semigroup. Assuming $N$ large, we have $\#\wt S\geqslant \# S_3\geqslant e^{(1-3\ve )N}.$ Letting $\nu$ be the uniform measure on $\rho(\wt S),$ we now verify that this is a desired construction.
	\begin{itemize}
		\item Note that $\rho(f)^m=\rho(sf^m)^{-1}\rho(sf^{2m})\in G_\nu$ for every $f\in F'.$ By the first condition in \ref{prop: free semigroup}, we have $G_\nu$ is Zariski dense in $\bG.$
		\item By the definition of Borel Anosov representations, we can take $c_4>0$ such that for every $1\leqslant p\leqslant n-1$ and $|\gamma|$ large enough, $\log\sigma_p(\rho(\gamma))-\log\sigma_{p+1}(\rho(\gamma))\geqslant c_4|\gamma |$. 
        Assuming $N$ is large enough only depends on $m,$ for every $\wt s\in\wt S$ we have 
		\[|\wt s|\geqslant c_3N -2m\max_{f\in F}|f|\geqslant c_3'N+C_2\quad \text{and}\quad  |\wt s|\leqslant c_2^{-1}N +2m\max_{f\in F}|f|\leqslant c_2'^{-1}N\]
		where $c_2'=c_2/2,c_3'=c_3/2$ and $C_2$ is the constant given by Proposition \ref{prop: free semigroup}. Then for every $\wt s_1,\cdots \wt s_k \in\wt S,$ due to \eqref{eqn: constant cancellation2}, we have $|\wt s_1\cdots \wt s_k|\geqslant c_3' kN.$
		Hence for every $1\leqslant p\leqslant n-1 ,$ we have
		\begin{equation}\label{eqn: quasi-geodesic}
			(\log \sigma_p-\log\sigma_{p+1})(\rho(\wt s_1\cdots\wt s_k))\geqslant c_4 c_3'kN\geqslant c_2'c_3'c_4 \sum_{i=1}^k |\wt s_i|.
		\end{equation}
		Recall that $\supp\nu=\wt S$. By the first inequality in \eqref{eqn: quasi-geodesic} we have
		\[\lambda_p(\nu)-\lambda_{p+1}(\nu)=\lim_{k\to\infty}\frac{1}{k}\int [\log\sigma_p(\rho(\gamma))-\log\sigma_{p+1}(\rho(\gamma))]\dr\nu^{*k}(\gamma)\geqslant c_4c_3'N.\]
		Taking $c=c_4c_3',$ we obtain the conclusion.
		\item Since $\wt S$ freely generates a free semigroup and $\nu$ is the uniform measure on $\wt S,$ we have
		\[h_{\mr{RW}}(\nu)=\log\#\wt S\geqslant (1-3\ve )N.\]
		Shrinking $\ve >0,$ we obtain the first estimate.
		 
		In order to estimate $\psi(\lambda(\nu)),$ we need an almost additivity property of $\log\sigma_p.$ 
		Recall the second inequality in \eqref{eqn: quasi-geodesic} and the constant cancellation property in \eqref{eqn: constant cancellation2}, which verifies the condition of \cref{lem: almost additivity}. 
		By applying Lemma \ref{lem: almost additivity}, there exists $\delta>0$ only depending on $C_2$ and $c_2'c_3'c_4$ such that 
		\[\abs{\log \sigma_p (\rho(\wt s_1\cdots\wt s_k))-\sum_{i=1}^k\log\sigma_p(\wt s_i)}\leqslant -k\log\delta\]
		for every $\wt s_1,\cdots,\wt s_k\in \wt S$ and $1\leqslant p\leqslant n.$ Since $\psi$ is a linear function, we have
		\[\abs{\psi(\kappa(\rho(\wt s_1\cdots\wt s_k)))-\sum_{i=1}^k\psi(\kappa(\rho(\wt s_i)))}\leqslant -kC_3\log\delta, \]
		where $C_3>0$ only depends on $\psi .$ For each $\wt s\in\wt S,$ write $\wt s=sf^\varsigma,$ where $s\in S_3$ and $f\in F,\varsigma\in\lb{m,2m}.$ Then there exists $C_4>0$ only depending on $F$ and $m$ such that $|\log\sigma_p(\rho\wt s)- \log\sigma_p(\rho s)|\leqslant C_4$ for every $p.$ Hence $\psi(\kappa (\rho\wt s))\leqslant N+C_3C_4$ since $s\in S_3\sbs S_1.$
	
		Then for sufficiently large $N,$ we have
		\[\psi(\kappa(\rho(\wt s_1\cdots\wt s_k)))\leqslant kN+kC_3C_4-kC_3\log\delta \leqslant (1+\ve)kN.\]
		This implies that
		\[\psi(\lambda(\nu))=\lim_{k\to\infty}\frac{1}{k} \int \psi(\kappa(\rho(\gamma)))\dr\nu^{*k}(\gamma)\leqslant (1+\ve)N. \qedhere\]
	\end{itemize}
\end{proof}

\subsection{The variational principle of critical exponent}
This section is devoted to prove Theorem \ref{thm: variational principle}. We will also present a version for flag varieties.
In this section, we consider a Borel Anosov representation $\rho:\Gamma\to \SL_n(\RR)$  and let $\bG$ be the Zariski closure of $\rho(\Gamma).$ Recall the affinity exponent $s_{A}(\rho)$ given in \eqref{eqn: critical exponent}, which is expressed as 
\[s_{\mathrm{A}}(\rho)\defeq\sup\lb{s: \sum_{\gamma\in\Gamma}\exp(-\psi_{s}(\rho(\gamma)))=\infty } \]
where
\[\psi_s(g)\defeq\sum_{1\leqslant i\leqslant \lfloor s\rfloor }(\log\sigma_1(g)-\log\sigma_{i+1}(g))+(s-\lfloor s\rfloor)(\log\sigma_1(g)-\log\sigma_{\lfloor s\rfloor+2}(g)),\quad g\in\SL_n(\RR).\]
Note that $\psi_s(g)$ is a linear function on Cartan projection $\kappa(g).$ By abuse of the notation, we also denote $\psi_s$ to be the linear function on $\frak a$ satisfying $\psi_s(\kappa(g))=\psi_s(g).$ Then $\psi_s$ is positive with respect to $\frak a^+.$

Let $\sP_{\mr{f.s.}}(\rho)$ be the family of finitely supported probability measures on $\rho(\Gamma).$
Let $\sP_{\mr{f.s.}}^\bG(\rho)$ be the family of $\nu\in \sP_{\mr{f.s.}}(\rho)$ satisfying $G_\nu$ is Zariski dense in $\bG.$
We will also consider $\sP_{\mr{f.s.}}^{\bG^0}(\rho)$ where $\bG^0$ denotes the identity (Zariski-)component of $\bG.$
For $\nu\in\sP_{\mr{f.s}}(\rho),$ let $\mu$ be an ergodic stationary measure of $\nu$ on $\PP(\RR^d).$ 
Recall the Lyapunov dimension of $\mu$ in \eqref{eqn: Lyapunov dimension}. Then Theorem \ref{thm: variational principle} is interpreted as the following. 

\begin{prop}\label{prop: projective variation}
    Let $\rho:\Gamma\to\SL_n(\RR)$ be a Zariski dense Borel Anosov representation (that is $\bG=\SL_n(\RR)$). Then
\begin{align*}
    s_{A}(\rho)&\leqslant\sup\lb{\dim_{\mr{LY}}\mu: \mu\text{ is the unique stationary measure on $\PP(\RR^n)$ of some }\nu\in\sP_{\mr{f.s.}}^\bG(\rho)}
\end{align*}
\end{prop}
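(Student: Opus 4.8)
The plan is to reduce the inequality to two facts already established above: Proposition \ref{prop: series variation}, which is the substantial geometric--group--theoretic input, and Proposition \ref{prop:equal.entropy.geometry}, the identity $h_{\mr{F}}=h_{\mr{RW}}$. What remains to supply is only an elementary reinterpretation of the Lyapunov dimension together with a short monotonicity estimate. Concretely, I would show: for every $t<s_{A}(\rho)$ and every $\ve>0$ there is $\nu\in\sP_{\mr{f.s.}}^{\bG}(\rho)$ whose unique stationary measure $\mu$ on $\PP(\RR^n)$ satisfies $\dim_{\mr{LY}}\mu\geqslant t-O_t(\ve)$; sending $\ve\to0$ and then $t\uparrow s_{A}(\rho)$ yields the statement. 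A preliminary reduction is needed because Proposition \ref{prop: series variation} is stated under the standing assumptions that $\Gamma$ is torsion-free and $\rho$ faithful: since $\rho(\Gamma)$ is a finitely generated linear group, Selberg's lemma gives a torsion-free finite-index subgroup, and after quotienting $\Gamma$ by the finite group $\ker\rho$ (which changes neither $\rho(\Gamma)$ nor $\bG$ and keeps the group hyperbolic) and passing to a finite-index subgroup I may assume $\Gamma$ torsion-free and $\rho$ faithful. Restricting to a finite-index subgroup keeps $\rho$ Borel Anosov and, as $\SL_n(\RR)$ is Zariski connected, keeps it Zariski dense; a coset-by-coset comparison of Poincar\'e series --- using that $\sigma_j(\rho(\gamma g))$ and $\sigma_j(\rho(\gamma))$ differ by multiplicative factors depending only on $g$, so that $\psi_s(\kappa(\rho(\gamma g)))-\psi_s(\kappa(\rho(\gamma)))$ is bounded --- shows $s_{A}$ is unchanged, while $\sP_{\mr{f.s.}}^{\bG}$ of the subgroup embeds into that of $\Gamma$. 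So the reduction is harmless.

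The key step is a reformulation of $\dim_{\mr{LY}}\mu$. For finitely supported $\nu$ with $\chi_{n-1}(\nu)>0$, the function $s\mapsto\psi_s(\lambda(\nu))=\sum_{i=1}^{\lfloor s\rfloor}\chi_i(\nu)+(s-\lfloor s\rfloor)\,\chi_{\lfloor s\rfloor+1}(\nu)$ is continuous, strictly increasing on $[0,n-1]$, affine with slope $\chi_{m+1}(\nu)$ on each $(m,m+1)$, and runs from $0$ to $\sum_{i=1}^{n-1}\chi_i(\nu)$. Comparing with \eqref{eqn: Lyapunov dimension}, a direct check gives
\[\dim_{\mr{LY}}\mu=\sup\lb{\,s\in[0,n-1]:\psi_s(\lambda(\nu))\leqslant h_{\mr{F}}(\mu,\nu)\,}\]
for the stationary measure $\mu$ of $\nu$, interpreting the supremum as $n-1$ in the case $h_{\mr{F}}(\mu,\nu)\geqslant\sum_{i=1}^{n-1}\chi_i(\nu)$ (this case is harmless since $s_{A}(\rho)\leqslant n-1$, and in fact $h_{\mr{F}}\leqslant\sum_{i=1}^{n-1}\chi_i$ always, by the Ruelle-type bound on the Furstenberg entropy of a stationary measure on $\PP(\RR^n)$).

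Now I would fix $t<s_{A}(\rho)$. Then $\psi_t$ is a positive linear function on $\frak a$, and the series $\sum_{\gamma\in\Gamma}\exp(-\psi_t(\kappa(\rho(\gamma))))$ diverges by definition of $s_{A}$; so Proposition \ref{prop: series variation} applied with $\psi=\psi_t$ produces $c=c(t)>0$ such that for each small $\ve>0$ there exist arbitrarily large $N$ and $\nu\in\sP_{\mr{f.s.}}(\rho)$ with $G_\nu$ Zariski dense in $\SL_n(\RR)$ --- hence $\nu\in\sP_{\mr{f.s.}}^{\bG}(\rho)$ and $\mu$ is the unique $\nu$-stationary measure on $\PP(\RR^n)$ --- satisfying $\lambda_p(\nu)-\lambda_{p+1}(\nu)\geqslant cN$ for all $p$ (so $\chi_p(\nu)\geqslant cN$ for all $p$, in particular $\chi_{n-1}(\nu)>0$ and the reformulation applies with all slopes $\geqslant cN$), $h_{\mr{RW}}(\nu)\geqslant(1-\ve)N$, and $\psi_t(\lambda(\nu))\leqslant(1+\ve)N$. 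Proposition \ref{prop:equal.entropy.geometry} upgrades the entropy bound to $h_{\mr{F}}(\mu,\nu)=h_{\mr{RW}}(\nu)\geqslant(1-\ve)N$. Putting $t'=\max\{0,t-2\ve/c\}$ and using that $s\mapsto\psi_s(\lambda(\nu))$ has slope at least $cN$ on $[t',t]$, I get $\psi_{t'}(\lambda(\nu))\leqslant\psi_t(\lambda(\nu))-(t-t')\,cN\leqslant(1+\ve)N-2\ve N=(1-\ve)N\leqslant h_{\mr{F}}(\mu,\nu)$ (trivially so when $t'=0$), hence $\dim_{\mr{LY}}\mu\geqslant t'\geqslant t-2\ve/c$. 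Letting $\ve\to0$ with $t$ (and hence $c(t)$) fixed shows the supremum in the statement is at least $t$, and $t\uparrow s_{A}(\rho)$ finishes.

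I do not expect a serious obstacle in this argument as such: all the real difficulty has been concentrated in Proposition \ref{prop: series variation} (building free sub-semigroups that realize near-maximal random-walk entropy while keeping every singular-value gap growing linearly, which is where the geometric group theory of \cref{sec: geometric group thry} is used) and in the entropy identity of Proposition \ref{prop:equal.entropy.geometry}. The only places calling for care are the torsion-free/faithful reduction and the well-posedness of formula \eqref{eqn: Lyapunov dimension}, both dealt with above; and the coset-by-coset Poincar\'e series comparison, which is routine from the behaviour of singular values under multiplication by a fixed element.
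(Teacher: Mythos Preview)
Your proposal is correct and follows essentially the same approach as the paper: the same reduction to a torsion-free faithful representation via Selberg's lemma, the same application of Proposition~\ref{prop: series variation} to the linear function $\psi_t$, and the same use of Proposition~\ref{prop:equal.entropy.geometry} to convert $h_{\mr{RW}}$ to $h_{\mr F}$. The only cosmetic difference is that you package the final arithmetic through the reformulation $\dim_{\mr{LY}}\mu=\sup\{s:\psi_s(\lambda(\nu))\leqslant h_{\mr F}(\mu,\nu)\}$ and a slope estimate, whereas the paper (in the proof of the parallel Proposition~\ref{prop: flag variation}) subtracts $\ve$ from one coefficient $a_{ij}$ directly; both reach $\dim_{\mr{LY}}\mu\geqslant t-O(\ve)$ by the same mechanism.
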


Besides, we also have a variational principle for dimensions on the flag variety. We consider
\[\psi_{F,s}(g)\defeq\inf\lb{\sum_{1\leqslant i<j\leqslant n} a_{ij}\log\frac{\sigma_i(g)}{\sigma_j(g)} :0\leqslant a_{ij}\leqslant 1,\sum_{1\leqslant i<j\leqslant n }a_{ij}=s}.\]
Note that $\psi_{F,s}(g)$ is increasing with respect to $s.$ The affinity exponent on the flag variety is given by 
%
%
\[s_{A,F}(\rho)\defeq\sup\lb{s:\sum_{\gamma\in\Gamma}\exp(-\psi_{F,s}(\rho(\gamma)))=\infty}. \]
For instance, in the case of $n=3,$ the function $\psi_{F,s}$ is given by
\[\psi_{F,s}(g)=\case{ &s\min\lb{\log\frac{\sigma_1(g)}{\sigma_2(g)},\log\frac{\sigma_2(g)}{\sigma_3(g)}},&s\leqslant 1;\\ 
&(s-1)\sb{\log\frac{\sigma_1(g)}{\sigma_2(g)}+\log\frac{\sigma_2(g)}{\sigma_3(g)}}+(2-s)\min\lb{\log\frac{\sigma_1(g)}{\sigma_2(g)},\log\frac{\sigma_2(g)}{\sigma_3(g)}},&1< s\leqslant 2;\\
&\log\frac{\sigma_1(g)}{\sigma_2(g)}+\log\frac{\sigma_2(g)}{\sigma_3(g)}+(s-2)\log\frac{\sigma_1(g)}{\sigma_3(g)} ,&2<s\leqslant 3.} \]
In general, $\psi_{F,s}(g)$ is always a minimum of finitely many linear functions on $\kappa(g)$ which is positive with respect to $\frak a^+.$ 
Specifically, $\psi_{F,s}(g)=\min\lb{\psi_{F,k}(\kappa(g)):k\in\cK_s},$ where each $\psi_{F,k}(\kappa(g))$ is of the form
\[\sum_{1\leqslant i<j\leqslant n} a_{ij}(\log\sigma_i(g)-\log\sigma_j(g))\]
satisfying $0\leqslant a_{ij}\leqslant 1,\Sigma_{1\leqslant i<j\leqslant n}a_{ij}=s$ and at most one of $a_{ij}$ is neither $0$ nor $1.$ For given $s$ and $n,$ there are at most finitely many such linear functionals. Hence $\#\cK_s$ is finite.

For $\nu\in\sP_{\mr{f.s}}(\rho),$ let $\mu$ be an ergodic stationary measure of $\nu$ on $\cF(\RR^n).$ Then the Lyapunov dimension of $\mu$ is given by 
\[\dim_{\mr{LY}}\mu=\sup\lb{\sum_{1\leqslant i<j\leqslant n} d_{ij}: 0\leqslant d_{ij}\leqslant 1,\sum_{1\leqslant i<j\leqslant n}d_{ij}(\lambda_i(\nu)-\lambda_j(\nu))=h_{\mr F}(\mu,\nu) }. \]

\begin{prop}\label{prop: flag variation}
    Let $\rho:\Gamma\to\SL_n(\RR)$ be a Borel Anosov representation. Then
    \begin{align*}
         s_{A,F}(\rho)&\leqslant \sup\lb{\dim_{\mr{LY}}\mu: \mu\text{ is an ergodic stationary measure on $\cF(\RR^n)$ of some }\nu\in\sP_{\mr{f.s.}}^{\bG^0}(\rho)}
    \end{align*}
\end{prop}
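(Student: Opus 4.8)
The plan is to run an argument parallel to the proof of Proposition~\ref{prop: projective variation}: write $\psi_{F,s}$ as a minimum of finitely many positive linear functionals on $\frak a$, select by pigeonhole the one carrying the divergence of the Poincar\'e series, feed it into Proposition~\ref{prop: series variation}, and convert the resulting entropy and Lyapunov-gap estimates into a lower bound for the Lyapunov dimension on $\cF(\RR^n)$. First I would reduce to the standing hypotheses of Proposition~\ref{prop: series variation}. We may assume $\Gamma$ is non-elementary (otherwise it is virtually cyclic and the statement is checked directly). Since $\rho$ is Borel Anosov it is a quasi-isometric embedding, hence has finite kernel; replacing $\Gamma$ by $\rho(\Gamma)$ we may assume $\rho$ is the inclusion of a finitely generated hyperbolic subgroup of $\SL_n(\RR)$, which is linear over a field of characteristic $0$ and therefore virtually torsion-free by Selberg's lemma. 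Choosing a torsion-free finite-index subgroup $\Gamma_0\leq\Gamma$ and setting $\Gamma_1:=\Gamma_0\cap\bG^0$, the group $\Gamma_1$ is torsion-free, non-elementary and hyperbolic, $\rho|_{\Gamma_1}$ is faithful and Borel Anosov, and the Zariski closure of $\rho(\Gamma_1)$ equals $\bG^0$ (a finite-index subgroup of an algebraic group contains the identity component), which is Zariski connected. Because $\Gamma_1$ has finite index in $\Gamma$, translating by coset representatives changes each $\psi_{F,k}$ — hence $\psi_{F,s}$ — by a bounded amount (the $\log\sigma_j$ are submultiplicative), so the $\psi_{F,s}$-Poincar\'e series over $\Gamma$ and over $\Gamma_1$ have the same critical exponent and $s_{A,F}$ is unchanged; moreover $\sP^{\bG^0}_{\mr{f.s.}}(\rho|_{\Gamma_1})\subseteq\sP^{\bG^0}_{\mr{f.s.}}(\rho)$ with identical stationary measures on $\cF(\RR^n)$, so the supremum on the right only decreases. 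Hence it suffices to prove the inequality when, in addition, $\Gamma$ is torsion-free and non-elementary, $\rho$ is faithful, and $\bG=\bG^0$ is Zariski connected.

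Next, fix $s<s_{A,F}(\rho)$, so $\sum_{\gamma\in\Gamma}\exp(-\psi_{F,s}(\rho(\gamma)))=\infty$. Recall $\psi_{F,s}=\min_{k\in\cK_s}\psi_{F,k}$ with $\cK_s$ finite, each $\psi_{F,k}$ being a nonnegative combination of the simple roots $\alpha_i$ with coefficient sum $s>0$ (since $\lambda_i-\lambda_j=\alpha_i+\cdots+\alpha_{j-1}$), hence a linear functional on $\frak a$ positive with respect to $\frak a^+$. Partitioning $\Gamma$ according to which $k\in\cK_s$ realizes $\psi_{F,s}(\rho(\cdot))$ (ties broken by a fixed order on $\cK_s$), one of the pieces $\Gamma_{k_0}$ satisfies $\sum_{\gamma\in\Gamma_{k_0}}\exp(-\psi_{F,k_0}(\kappa(\rho\gamma)))=\infty$, and therefore $\sum_{\gamma\in\Gamma}\exp(-\psi_{F,k_0}(\kappa(\rho\gamma)))=\infty$. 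Now apply Proposition~\ref{prop: series variation} with $\psi:=\psi_{F,k_0}$ and $\bG=\bG^0$: there is a constant $c>0$ so that for every $\ve>0$ there exist arbitrarily large integers $N$ and $\nu\in\sP^{\bG^0}_{\mr{f.s.}}(\rho)$ with $\lambda_p(\nu)-\lambda_{p+1}(\nu)\geq cN>0$ for every $p$ (so $\lambda(\nu)$ has simple spectrum), $h_{\mr{RW}}(\nu)\geq(1-\ve)N$, and $\psi_{F,k_0}(\lambda(\nu))\leq(1+\ve)N$.

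Let $\mu_\cF$ be the $\nu$-stationary measure on $\cF(\RR^n)$ supported on the limit set; it is unique there — hence ergodic — because $G_\nu$ is Zariski dense in $\bG^0$. Since $G_\nu\leq\rho(\Gamma)$ is discrete, running the argument of Proposition~\ref{prop.entropy.discrete} with $\bG^0$ in place of $\SL_n(\RR)$ identifies the Poisson boundary of $(G_\nu,\nu)$ with the Furstenberg boundary $\bG^0/P_0$; the $\bG^0$-equivariant map $\bG^0/P_0\to\cF(\RR^n)$ exists because $\rho$ is Borel Anosov, and it is injective on limit sets (both being copies of $\partial\Gamma$ via the Anosov boundary maps), so — exactly as in Lemma~\ref{lem.trivial.fibre} and the proof of Proposition~\ref{prop:equal.entropy.geometry} — passing to $\cF(\RR^n)$ is relatively measure-preserving and $h_{\mr F}(\mu_\cF,\nu)=h_{\mr{RW}}(\nu)\geq(1-\ve)N$. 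Set $s':=\tfrac{1-\ve}{1+\ve}s$. For fixed $\lambda(\nu)$ the map $r\mapsto\psi_{F,r}(\lambda(\nu))$ is convex with value $0$ at $r=0$ (it is obtained by loading the budget $r$ onto the smallest root gaps $\lambda_i(\nu)-\lambda_j(\nu)$ first), hence $r\mapsto\psi_{F,r}(\lambda(\nu))/r$ is nondecreasing, giving
\[\psi_{F,s'}(\lambda(\nu))\leq\frac{s'}{s}\,\psi_{F,s}(\lambda(\nu))\leq\frac{s'}{s}\,\psi_{F,k_0}(\lambda(\nu))\leq(1-\ve)N\leq h_{\mr F}(\mu_\cF,\nu).\]
Starting from a minimizer $(\bar d_{ij})\in[0,1]^{\binom{n}{2}}$ for $\psi_{F,s'}(\lambda(\nu))$ — so $\sum_{i<j}\bar d_{ij}=s'$ and $\sum_{i<j}\bar d_{ij}(\lambda_i(\nu)-\lambda_j(\nu))=\psi_{F,s'}(\lambda(\nu))\leq h_{\mr F}(\mu_\cF,\nu)$ — and increasing the coordinates that are still $<1$ until $\sum_{i<j}d_{ij}(\lambda_i(\nu)-\lambda_j(\nu))$ reaches $h_{\mr F}(\mu_\cF,\nu)$, which is possible because $h_{\mr F}(\mu_\cF,\nu)\leq\sum_{1\leq i<j\leq n}(\lambda_i(\nu)-\lambda_j(\nu))$ by the Furstenberg inequality on the flag variety, we obtain $d\in[0,1]^{\binom{n}{2}}$ with $\sum_{i<j}d_{ij}(\lambda_i(\nu)-\lambda_j(\nu))=h_{\mr F}(\mu_\cF,\nu)$ and $\sum_{i<j}d_{ij}\geq s'$. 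By the Lyapunov dimension formula on $\cF(\RR^n)$ this gives $\dim_{\mr{LY}}\mu_\cF\geq s'$. Letting $\ve\to0$ shows the right-hand side of the asserted inequality is $\geq s$, and letting $s\uparrow s_{A,F}(\rho)$ finishes the proof.

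The delicate step is the entropy identity $h_{\mr F}(\mu_\cF,\nu)=h_{\mr{RW}}(\nu)$ on the \emph{full} flag variety $\cF(\RR^n)$ when $\bG^0\subsetneq\SL_n(\RR)$: Proposition~\ref{prop.entropy.discrete} is phrased for $\SL_n(\RR)$, and what must be verified is (i) that the Poisson boundary of $(G_\nu,\nu)$ is the Furstenberg boundary of the Zariski closure $\bG^0$ — the Kaimanovich--Ledrappier--Furman theory already invoked there, using that $\bG^0$ is reductive (a consequence of $\rho$ being Anosov) and that $G_\nu$ is discrete and Zariski dense — and (ii) that the $\bG^0$-equivariant map $\bG^0/P_0\to\cF(\RR^n)$, available precisely because a minimal parabolic of a Borel Anosov image is contained in a Borel subgroup of $\SL_n(\RR)$, loses no entropy, i.e.\ is generically injective on the support of $\mu_\cF$; this last point reduces, via the injectivity of the Borel Anosov boundary maps, to the fibre-triviality established in Lemma~\ref{lem.trivial.fibre}. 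Everything else is formally the same as in the projective case treated in Proposition~\ref{prop: projective variation}.
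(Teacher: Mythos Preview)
Your overall strategy --- reduce to torsion-free faithful with connected Zariski closure, pigeonhole to a single positive linear functional $\psi_{F,k_0}$, invoke Proposition~\ref{prop: series variation}, then convert the entropy and gap estimates into a Lyapunov-dimension bound --- matches the paper's. The differences are in two places.

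First, and most importantly, the entropy identity $h_{\mr F}(\mu_\cF,\nu)=h_{\mr{RW}}(\nu)$. You route this through the algebraic Furstenberg boundary $\bG^0/P_0$ and then try to push forward to $\cF(\RR^n)$, which you correctly flag as delicate: you need a $\bG^0$-equivariant map $\bG^0/P_0\to\cF(\RR^n)$ (i.e.\ that a minimal parabolic of $\bG^0$ sits inside a Borel of $\SL_n$), and you need it to lose no entropy. The paper bypasses all of this. Proposition~\ref{prop: series variation} already guarantees that the constructed $\nu$ has \emph{simple Lyapunov spectrum on $\RR^n$} (namely $\lambda_p(\nu)-\lambda_{p+1}(\nu)\geq cN$ for every $p$); for a finitely supported measure on a discrete linear group with simple Lyapunov spectrum, Ledrappier's theorem (cited as \cite[Theorem~2.21]{furman_random_2002}) directly identifies $(\cF(\RR^n),\mu_\cF)$ --- the distribution of the Oseledec flag --- as the Poisson boundary of $(G_\nu,\nu)$, whence $h_{\mr F}(\mu_\cF,\nu)=h_{\mr{RW}}(\nu)$ by \cite[Theorem~2.31]{furman_random_2002}. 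No reference to $\bG^0$ or its parabolics is needed; the simplicity of the spectrum does all the work. This is why the flag case is in fact \emph{easier} than the projective case, not harder: the extra trivial-fibre argument of Lemma~\ref{lem.trivial.fibre} is only needed to descend to $\PP(\RR^n)$.

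Second, your Lyapunov-dimension step is a minor variant: you scale $s$ down to $s'=\tfrac{1-\ve}{1+\ve}s$ and use convexity of $r\mapsto\psi_{F,r}(\lambda(\nu))$ together with $\psi_{F,0}=0$ to get $\psi_{F,s'}(\lambda(\nu))\leq h_{\mr F}$, then fill up to equality. The paper instead takes the specific $(a_{ij})$ realizing $\psi_{F,k_0}$, subtracts $\ve$ from one nonzero coordinate $a_{i_0j_0}$, and uses $\lambda_{i_0}(\nu)-\lambda_{j_0}(\nu)\geq cN$ to flip the inequality. Both work; both implicitly use $h_{\mr F}(\mu_\cF,\nu)\leq\sum_{i<j}(\lambda_i(\nu)-\lambda_j(\nu))$ to pass from $\leq$ to $=$ in the dimension formula.
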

We will prove the case on flag varieties first. The case on the projective space is very similar and we will only indicate where we need to modify in the proof.
\begin{proof}[Proof of Proposition \ref{prop: flag variation}]
    
Since $\rho$ is Anosov, $\ker\rho$ is finite. Replacing $\Gamma$ by $\Gamma/\ker\rho,$ we can assume that $\rho$ is faithful. Besides, by Selberg's lemma, $\rho(\Gamma)$ is virtually torsion-free. Replacing $\Gamma$ by a finite index torsion-free subgroup, we may assume that $\Gamma$ is torsion free and the Zariski closure of $\rho(\Gamma)$ is Zariski connected. This process does not affect the value of affinity exponent and the Zariski closure of $\rho(\Gamma)$ is indeed the identity component of the original one.
Then the proposition is a direct consequence of the following lemma.
\begin{lem}
	Let $\Gamma$ be a non-elementary torsion-free hyperbolic group and $\rho:\Gamma\to\SL_n(\RR)$ a faithful Borel Anosov representation such that the Zariski closure $\bG$ of $\rho(\Gamma)$ is Zariski connected.
	Let $s>0$ such that the series $\sum_{\gamma\in\Gamma} \exp(-\psi_{F,s}(\rho(\gamma)))$ diverges. Then for every $\ve>0$ sufficiently small, there exists $\nu\in \sP_{\mr{f.s.}}^\bG(\rho)$ and an ergodic $\nu$-stationary measure $\mu$ on $\cF(\RR^n)$ satisfying 
	\[\dim_{\mr{LY}}\mu\geqslant s-\ve.\]
\end{lem}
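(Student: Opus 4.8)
The plan is to feed a suitable positive linear functional into Proposition~\ref{prop: series variation} and then read off the Lyapunov dimension estimate by a short convexity argument. Since the Poincar\'e series attached to $\psi_{F,s}$ diverges and, as recalled above, $\psi_{F,s}=\min_{k\in\cK_s}\psi_{F,k}$ with $\cK_s$ finite and each $\psi_{F,k}$ a positive linear function on $\frak a$ of the form $\lambda\mapsto\sum_{1\leqslant i<j\leqslant n}a_{ij}(\lambda_i-\lambda_j)$ with $0\leqslant a_{ij}\leqslant 1$ and $\sum_{i<j}a_{ij}=s$, the elementary inequality $\exp(-\psi_{F,s}(g))\leqslant\sum_{k\in\cK_s}\exp(-\psi_{F,k}(\kappa(g)))$ forces $\sum_{\gamma\in\Gamma}\exp(-\psi_{F,k_0}(\kappa(\rho(\gamma))))=\infty$ for at least one $k_0\in\cK_s$. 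I fix such a $k_0$ and set $\psi\defeq\psi_{F,k_0}$, with coefficients $(a_{ij})$ as above; it is positive with respect to $\frak a^+$ since $\sum_{i<j}a_{ij}=s>0$.

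Applying Proposition~\ref{prop: series variation} to $\psi$ (legitimate, as $\bG$ is Zariski connected), I obtain a constant $c>0$ and, for each small $\ve'>0$ to be fixed later in terms of $\ve$ and $s$, a large integer $N$ and a measure $\nu\in\sP_{\mr{f.s.}}(\rho)$ with $G_\nu$ Zariski dense in $\bG$, with $\lambda_p(\nu)-\lambda_{p+1}(\nu)\geqslant cN$ for all $p$, and with $h_{\mr{RW}}(\nu)\geqslant(1-\ve')N$ and $\psi(\lambda(\nu))\leqslant(1+\ve')N$. In particular $\nu\in\sP_{\mr{f.s.}}^\bG(\rho)$ and the Lyapunov spectrum of $\nu$ is simple. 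As $\rho$ is Anosov, $\rho(\Gamma)$ and hence $G_\nu$ is discrete, so Proposition~\ref{prop.entropy.discrete} gives that the $\nu$-stationary measure $\mu$ on $\cF(\RR^n)$ is unique (hence ergodic) and satisfies $h_{\mr F}(\mu,\nu)=h_{\mr{RW}}(\nu)\geqslant(1-\ve')N$.

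It remains to exhibit an admissible vector for the supremum defining $\dim_{\mr{LY}}\mu$. Writing $\chi_{ij}=\lambda_i(\nu)-\lambda_j(\nu)>0$, one has $\sum_{i<j}a_{ij}\chi_{ij}=\psi(\lambda(\nu))\leqslant(1+\ve')N$, while $h_{\mr F}(\mu,\nu)\geqslant(1-\ve')N$ and, by the standard Furstenberg entropy bound on the full flag variety, $h_{\mr F}(\mu,\nu)\leqslant\sum_{1\leqslant i<j\leqslant n}\chi_{ij}$ (this is exactly what makes the formula for $\dim_{\mr{LY}}\mu$ meaningful). I then travel along the piecewise linear path in $[0,1]^{n(n-1)/2}$ from $0$ to $(a_{ij})$ and thereafter to $(1,\dots,1)$: along it $\sum_{i<j}d_{ij}\chi_{ij}$ increases continuously and monotonically from $0$ through $\psi(\lambda(\nu))$ to $\sum_{i<j}\chi_{ij}$, and $\sum_{i<j}d_{ij}$ increases from $0$ through $s$ to $n(n-1)/2$. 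Hence the path meets the level set $\sum_{i<j}d_{ij}\chi_{ij}=h_{\mr F}(\mu,\nu)$ at some $(d_{ij})$, which lies on the second segment — whence $\sum_{i<j}d_{ij}\geqslant s$ — unless $h_{\mr F}(\mu,\nu)\leqslant\psi(\lambda(\nu))$, in which case $\sum_{i<j}d_{ij}=s\,h_{\mr F}(\mu,\nu)/\psi(\lambda(\nu))\geqslant s(1-\ve')/(1+\ve')$. Either way $\dim_{\mr{LY}}\mu\geqslant\sum_{i<j}d_{ij}\geqslant s(1-\ve')/(1+\ve')\geqslant s-\ve$, once $\ve'$ is chosen with $2s\ve'\leqslant\ve(1+\ve')$. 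This proves the lemma, hence Proposition~\ref{prop: flag variation}; the projective statement, Proposition~\ref{prop: projective variation}, is similar and Step~1 is vacuous there, since $\psi_s$ is already a single positive linear functional, the role of the path being played by the single gap $\chi_{d+1}(\nu)$.

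The substantive difficulty — building inside $\rho(\Gamma)$ a free sub-semigroup with prescribed exponential growth, almost-additive Cartan projections, and matching random walk entropy — is entirely absorbed into Proposition~\ref{prop: series variation} and ultimately into the geometric group theory of Section~\ref{sec: geometric group thry}. The remaining care in the present argument is purely bookkeeping: extracting a single functional from the minimum $\psi_{F,s}$ without losing the constraints $0\leqslant a_{ij}\leqslant 1$ and $\sum_{i<j}a_{ij}=s$, and then choosing the admissible $(d_{ij})$ honestly inside the cube rather than by rescaling $(a_{ij})$, which would be illegal precisely when $h_{\mr F}(\mu,\nu)>\psi(\lambda(\nu))$. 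The inequality $h_{\mr F}(\mu,\nu)\leqslant\sum_{i<j}\chi_{ij}$ should be quoted from the classical Furstenberg entropy bound rather than reproved.
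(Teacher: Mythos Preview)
Your argument follows the paper's approach closely: reduce to a single positive linear functional $\psi_{F,k_0}$, feed it into Proposition~\ref{prop: series variation}, then exhibit admissible coefficients $(d_{ij})$. One imprecision to fix: Proposition~\ref{prop.entropy.discrete} as stated assumes $G_\nu$ is Zariski dense in $\SL_n(\RR)$, not merely in $\bG$, so you cannot invoke it to conclude uniqueness of the stationary measure on $\cF(\RR^n)$ (and hence ergodicity) when $\bG\subsetneq\SL_n(\RR)$. The paper avoids this by taking the specific Oseledets--Furstenberg measure on $\cF(\RR^n)$ --- ergodic because the Lyapunov spectrum is simple --- and appealing directly to \cite[Theorems 2.21 and 2.31]{furman_random_2002} to identify it as the Poisson boundary of $(G_\nu,\nu)$, whence $h_{\mr F}(\mu,\nu)=h_{\mr{RW}}(\nu)$.

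With that correction your proof is complete. Your interpolation path for the final step is a clean variant of the paper's device, which instead subtracts $\ve$ from a single positive coefficient $a_{i_0 j_0}$ and uses the gap $\lambda_{i_0}(\nu)-\lambda_{j_0}(\nu)\geqslant cN$ from Proposition~\ref{prop: series variation} to force $\sum a'_{ij}\chi_{ij}\leqslant h_{\mr F}(\mu,\nu)$; both routes implicitly use $h_{\mr F}(\mu,\nu)\leqslant\sum_{i<j}\chi_{ij}$ to land on the equality constraint, which you make explicit.
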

\begin{proof}
	Recall that $\psi_{F,s}(\rho(\gamma))=\min\lb{\psi_{F,k}(\kappa(\rho(\gamma))):k\in \cK_s}$ is a minimum of finitely many linear functions. Then
	\[\sum_{k\in\cK_s}\sum_{\gamma\in\Gamma} \exp(-\psi_{F,k}(\kappa(\rho(\gamma))))\geqslant\sum_{\gamma\in\Gamma} \exp(-\psi_{F,s}(\rho(\gamma)))=\infty.\]
	This implies that there exists $k\in \cK_s$ satisfying $\sum_{\gamma\in\Gamma} \exp(-\psi_{F,k}(\rho(\gamma)))$ diverges. We fix a such $\psi_{F,k}$ in latter discussions and assume that
	\[\psi_{F,k}(g)=\sum_{1\leqslant i<j\leqslant n} a_{ij}(\log\sigma_i(g)-\log\sigma_j(g)),\quad\forall g\in\SL_n(\RR). \] 
	Now we apply Proposition \ref{prop: series variation} to $\psi_{F,k}.$ We obtain a constant $c>0$ and there exists a positive integer $N$ and $\nu\in\sP_{\mr{f.s.}}(\rho)$ satisfying
	\begin{itemize}
		\item $\supp\nu$ generates a semigroup whose Zariski closure is $\bG.$
		\item $\lambda_p(\nu)-\lambda_{p+1}(\nu)\geqslant c N$ for every $p=1,\cdots,n-1 .$
		\item $h_{\mr{RW}}(\nu)\geqslant (1-\frac{1}{2}c\ve )N$ and $\psi_{F,k}(\lambda(\nu))\leqslant (1+\frac{1}{2}c\ve )N.$
	\end{itemize}
	
    Since $\nu$ has a simple Lyapunov spectrum, there exists a $\nu$-stationary measure $\mu$ on $\cF(\RR^n)$ which corresponds to the distribution of Oseledec's splitting. Furthermore, $(\supp\mu,\mu)$ is the Poisson boundary for $(\Gamma_\nu,\nu)$ by \cite[Theorem 2.21]{furman_random_2002}, where $\Gamma_\nu$ is the group generated by $\supp\nu.$ By \cite[Theorem 2.31]{furman_random_2002}, $h_{\mr{F}}(\mu,\nu)=h_{\mr{RW}}(\nu)\geqslant (1-\frac{1}{2}c\ve )N.$ Now we estimate $\dim_{\mr{LY}}\mu.$
	
	Note that 
	\[\psi_{F,k}(\lambda(\nu))=\sum_{1\leqslant i<j\leqslant n} a_{ij}(\lambda_i(\nu)-\lambda_j(\nu))\leqslant (1+\frac{1}{2}c\ve)N.\]
	Assuming $a_{i_0 j_0}>0$ for some $1\leqslant i_0<j_0\leqslant n,$ we take 
	\[a_{ij}'=\case{&a_{ij}-\ve,&i=i_0,j=j_0;\\&a_{ij},&\text{otherwise}.}\]
	Then 
	\[\sum_{1\leqslant i<j\leqslant n} a_{ij}'(\lambda_i(\nu)-\lambda_j(\nu))\leqslant (1+\frac{1}{2}c\ve)N- \ve (\lambda_{i_0}(\nu)-\lambda_{j_0}(\nu))\leqslant (1-\frac{1}{2}c\ve)N\leqslant h_{\mr{F}}(\mu,\nu).\]
	Hence
	\[\dim_{\mr{LY}}\mu\geqslant \sum_{1\leqslant i<j\leqslant n}a_{ij}'= \sum_{1\leqslant i<j\leqslant n}a_{ij}-\ve=s-\ve. \]
	We obtain the desired conclusion.
\end{proof}
\end{proof}

\begin{proof}[Proof of Proposition \ref{prop: projective variation}]
    The only difference is to show the identity between the Furstenberg entropy and the random walk entropy. Since we assume additionally that $\rho(\Gamma)$ is Zariski dense in $\SL_n(\RR),$ the equality of two notions of entropies follows from Proposition \ref{prop:equal.entropy.geometry}. 
\end{proof}

\section{Hausdorff dimension of the Rauzy Gasket}\label{sec:rauzy}
We will verify the equality of the Haussdorff dimension of the Rauzy gasket with its affinity exponent. To simplify notations, we abbreviate $\Gamma_{\mathscr{R}}$ to $\Gamma$ in this section.
\subsection{Preliminaries and notation}
Recall that $\Delta$ is the projectivization of $\{(x,y,z): x,y,z\geq 0\}$ in $\PP(\RR^3).$ 
There is a natural bijection between $\Delta$ and the euclidean triangle $\wt{\Delta}=
\{(x,y,z)\in \R^3: x+y+z=1, x,y,z\geq 0\}$ by the projective map, which also preserves lines, hence triangles. 
The euclidean distance $d_E$ on $\wt\Delta$ from $\R^3$ is bi-Lipschitz equivalent to the projective distance $d$ coming from $\Delta$. 
Since Lipschitz constants do not affect the statements of lemmas, we identify $\Delta$ and $\wt\Delta$ in the following and we do not distinguish the euclidean metric and the projective metric.
Moreover, the area of triangles in $\Delta$ will be understood as the area of the corresponding triangle in $\wt\Delta$ in this section.

Recall that the Rauzy gasket $X\subset \P(\R^3)$ is a projective fractal set defined in the introduction. We may consider the classical coding of $X$ by infinite words as the case of IFSs. Let $\Lambda\defeq\lb{1,2,3}$ be the set of symbols. We have the following basic fact \cite[Lemma 3]{arnoux_rauzy_2013}.
\begin{lem}\label{lem: diam Rauzy}
    For every $\bi=(i_1,i_2,\cdots)\in\Lambda^\NN,$ we have $\lim_{n\to\infty} \diam A_{i_1}\cdots A_{i_n}\Delta=0.$
\end{lem}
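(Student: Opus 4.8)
The plan is to reformulate the statement and then reduce it to a combinatorial fact about the three explicit matrices. Write $M_n := A_{i_1}\cdots A_{i_n}$ and $T_n := M_n\Delta = A_{i_1}\cdots A_{i_n}\Delta$. Since each $A_i$ has nonnegative entries we have $A_i\Delta\subseteq\Delta$ (in fact $A_i\Delta=\{v\in\Delta:v_i\geq\tfrac12\}$), so the $T_n$ form a decreasing sequence of compact triangles; hence $T_\infty:=\bigcap_n T_n$ is a nonempty compact convex set, and a routine compactness argument shows $\diam T_n\to\diam T_\infty$. Thus \cref{lem: diam Rauzy} is equivalent to the assertion that $T_\infty$ is a single point, and I would prove this by excluding the two other possibilities: that $T_\infty$ has positive area, and that $T_\infty$ is a nondegenerate segment.

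The first step is an area estimate. Identifying $\Delta$ with $\wt\Delta$, the vertices of $T_n$ are the columns of $M_n$ normalized to coordinate-sum $1$, so $\mathrm{Area}(T_n)$ is proportional to $|\det M_n|/(s_1^{(n)}s_2^{(n)}s_3^{(n)})=1/(s_1^{(n)}s_2^{(n)}s_3^{(n)})$, where $s_j^{(n)}$ is the $\ell^1$-norm of the $j$-th column of $M_n$. From $M_n=M_{n-1}A_{i_n}$ and $A_{i_n}e_j=e_j$ for $j=i_n$, $A_{i_n}e_j=e_j+e_{i_n}$ for $j\neq i_n$, one reads off $s_{i_n}^{(n)}=s_{i_n}^{(n-1)}$ and $s_j^{(n)}=s_j^{(n-1)}+s_{i_n}^{(n-1)}$ for $j\neq i_n$; since all $s_j^{(m)}$ are positive integers, the product $s_1^{(n)}s_2^{(n)}s_3^{(n)}$ increases by at least $3$ at each step, hence is $\geq 1+3n$. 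Therefore $\mathrm{Area}(T_n)=O(1/n)\to 0$, so $\mathrm{Area}(T_\infty)=0$ and $T_\infty$ is either a point or a nondegenerate segment $\sigma$.

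It remains to rule out that $T_\infty$ contains a nondegenerate segment $\sigma$. If $\sigma$ lies in an edge of $\Delta$, say $\{v_3=0\}$, I would split into cases on the coding: if the letter $3$ never occurs, then every $M_n$ preserves that edge and restricts there to a product of $\begin{pmatrix}1&1\\0&1\end{pmatrix}$ and $\begin{pmatrix}1&0\\1&1\end{pmatrix}$, so $\sigma\subseteq\bigcap_n M_n([E_1,E_2])$ contradicts the classical fact that the corresponding nested Stern--Brocot (continued-fraction) intervals on $\mathbb{P}^1$ shrink to a point; if $3$ first occurs at position $k$, then since $A_1,A_2$ have third row $(0,0,1)$ while $A_3\Delta\subseteq\{v_3>0\}$, one gets $T_k\subseteq\{v_3>0\}$, contradicting $\sigma\subseteq T_k$. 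By the $S_3$-symmetry of $\{A_1,A_2,A_3\}$ this rules out $\sigma$ on any edge, so $\mathrm{relint}\,\sigma\subseteq\mathrm{int}\,\Delta$. For this remaining interior case — which I expect to be the main obstacle — I would pass to the preimage segments $\sigma_n:=M_n^{-1}\sigma\subseteq\Delta$ and combine two ingredients: each $A_i$ is a (weak) contraction for the Hilbert metric of $\Delta$, and the area collapse above forces $T_n$ to be an ever-thinner sliver around $\sigma$, so that $\sigma_n$ would have to absorb unbounded expansion while remaining in the bounded region $\Delta$. The difficulty is precisely that near the fixed vertices $E_i$ the maps $A_i$ are only \emph{parabolic-type} weak contractions, with no uniform rate, so the argument requires the kind of non-uniform bookkeeping that is classically due to Yoccoz. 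Since the statement is exactly \cite[Lemma 3]{arnoux_rauzy_2013}, in practice I would simply cite it, the outline above being the self-contained route.
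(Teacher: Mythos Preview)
The paper does not give its own proof of this lemma: it states it as a ``basic fact'' and cites \cite[Lemma 3]{arnoux_rauzy_2013} directly, treating it as a black box. So there is nothing in the paper to compare your argument against; your proposal already does strictly more than the paper by attempting a self-contained outline.

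Your area estimate and your exclusion of boundary segments are correct and clean. The one genuine gap you yourself identify is the interior-segment case: the Hilbert-metric/area-collapse sketch you give is heuristic, and the parabolic behaviour at the vertices $E_i$ means the ``unbounded expansion forced into a bounded region'' intuition does not immediately close into a proof without the non-uniform bookkeeping you allude to. Since you explicitly fall back on citing \cite[Lemma 3]{arnoux_rauzy_2013} for that step --- exactly as the paper does --- there is no discrepancy to flag. In a write-up matching the paper's level of detail, a one-line citation suffices.
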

This fact allows us to define the coding map 
\[\Phi:\Lambda^\NN\to\Delta,\quad \bi=(i_1,i_2,\cdots)\mapsto \cap_{n\in\NN} A_{i_1}\cdots A_{i_n}\Delta. \]
Then the image of $\Phi$ is exactly the Rauzy Gasket $X.$

For any $\gamma\in \Gamma$ we denote by $\Delta_\gamma$ the image $\gamma\cdot \Delta$. We also use $|\gamma|$ for the word length of $\gamma$ with respect to the standard generator set $\{A_i\}$ of $\Gamma$.
For later use, we consider the following notations. Recall that by freeness of $\Gamma$, any $\gamma\in \Gamma$ can be decomposed uniquely as the following $\gamma=A_{i_1}\cdots A_{i_{|\gamma|}}$. For any $n\leq |\gamma|$, we denote by $\gamma_n\defeq A_{i_1}\cdots A_{i_n}$.
We say that the last $n$ digits of $\gamma$ are not the same for some $n\leq |\gamma|$, if in the decomposition, $i_{|\gamma|-n+1}, \dots, i_{|\gamma|}$ are not the same. Recall $s_{\mathrm{A}}(\Gamma)$ defined in the introduction is the affinity exponent of  $\Gamma$.

We will also consider the transpose action of $\Gamma$. For any $\gamma=A_{i_1}\cdots A_{i_{|\gamma|}}\in \Gamma$, the transpose action $\gamma^t: \P(\R^3)\to \P(\R^3)$ of $\gamma$ is defined by $\gamma^t:=A_{i_{|\gamma|}}^t\cdots A_{i_1}^t$, where the transposes of $A_i$'s are 
\[A_1^t=\begin{pmatrix} 1 & 0 & 0 \\ 1 & 1 & 0\\ 1& 0& 1\end{pmatrix},\  A_2^t=\begin{pmatrix} 1 & 1 & 0 \\ 0 & 1 & 0\\ 0& 1& 1\end{pmatrix},\  A_3^t=\begin{pmatrix} 1 & 0 & 1 \\ 0 & 1 & 1\\ 0& 0& 1\end{pmatrix}.
    \]
It is not hard to check that for any $\gamma\in \Gamma$, the transpose action $\gamma^t$ preserves $\Delta$ since the entries of the matrix presentation of $\gamma^t$ 
are all non-negative. For any $n\leq |\gamma|$, we denote by $\gamma_n^t:=A_{i_{|\gamma|}}^t\cdots A_{i_{|\gamma|-n+1}}^t$. Notice that $\gamma_n^t$ is not equal to $(\gamma_n)^t$, it should be identified as $(\gamma^t)_n$. 

\color{teal}
\color{black}

\subsection{The upper bound of the Hausdorff dimension} \label{sec:upper rauzy}
The goal of this section is to show the upper bound of the Hausdorff dimension of the Rauzy gasket, that is $\dim X\leq s_{\mathrm{A}}(\Gamma)$. The following elementary lemma in linear algebra is the key observation, which plays a crucial role in estimating the upper bound of $\dim X .$

Recall that $\{e_1,e_2,e_3\}$ is the standard orthonormal basis of $\RR^3$ and $E_i=\RR e_i\in\PP(\RR^3).$



\begin{lem}\label{lem: crucial proj geo lem}
For every $n\in\N$, there exists $\epsilon_n>0$ such that for any $\gamma\in \Gamma$, if the last $n$ digits of $\gamma$ are not the same, then for $i=1,2,3$
\[\|\gamma e_i\|\geq \epsilon_n \sigma_1(\gamma). \]
\end{lem}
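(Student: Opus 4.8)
The plan is to work with the $\ell^1$-norms of the columns of $\gamma$. Since every $\gamma\in\Gamma$ has nonnegative entries, set $c_i(\gamma):=\|\gamma e_i\|_1$, the sum of the entries in the $i$-th column. These are comparable to the Euclidean norms ($\tfrac1{\sqrt3}\|\gamma e_i\|_1\le\|\gamma e_i\|\le\|\gamma e_i\|_1$), and $\sigma_1(\gamma)=\|\gamma\|_{\mathrm{op}}$ satisfies $\tfrac1{\sqrt3}\sigma_1(\gamma)\le\max_i\|\gamma e_i\|\le\sigma_1(\gamma)$. So it suffices to prove the purely combinatorial statement: if the last $n$ digits of $\gamma$ are not all equal, then $\min_i c_i(\gamma)\ge 2^{-n-1}\max_i c_i(\gamma)$, and then take $\epsilon_n=\tfrac13\cdot 2^{-n-1}$ after unwinding the norm comparisons.

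The computational input is the right-multiplication rule: from $A_\ell e_\ell=e_\ell$ and $A_\ell e_i=e_i+e_\ell$ for $i\ne\ell$, together with nonnegativity of the columns $\gamma e_i$, passing from $\gamma$ to $\gamma A_\ell$ fixes $c_\ell$ and replaces $c_i$ by $c_i+c_\ell$ for $i\ne\ell$. I would record three consequences: (a) the $c_i$ are non-decreasing along a word; (b) $\max_i c_i$ at most doubles at each step, hence grows by a factor at most $2^n$ over $n$ steps; (c) right after multiplying by $A_\ell$, the $\ell$-th column is (weakly) the smallest, i.e. $c_\ell(\gamma A_\ell)=\min_i c_i(\gamma A_\ell)$.

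The key structural step is a triangle inequality: for every $\gamma\in\Gamma$ one has $c_i(\gamma)\le c_j(\gamma)+c_k(\gamma)$ for every labelling $\{i,j,k\}=\{1,2,3\}$. This I would prove by a short induction on word length, the base case being the identity (where $(c_1,c_2,c_3)=(1,1,1)$) and the inductive step a two-line check using the rule above. Its role is exactly to forbid the configurations ``one huge column, two tiny columns'' that would otherwise break the statement; concretely it forces the second-largest column norm to be at least half the largest. With it, write $\gamma=\gamma_0A_{j_1}\cdots A_{j_n}$ where $j_1,\dots,j_n$ are the last $n$ digits (not all equal), put $B=\max_i c_i(\gamma_0)$, attained at index $m$, and pick by the triangle inequality some $m^*$ with $c_{m^*}(\gamma_0)\ge B/2$. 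If $j_n\in\{m,m^*\}$ then $c_{j_n}(\gamma)\ge c_{j_n}(\gamma_0)\ge B/2$ by (a). Otherwise $\{m,m^*\}=\{1,2,3\}\setminus\{j_n\}$; since the digits are not all $j_n$, some $j_p\ne j_n$, so $j_p\in\{m,m^*\}$, and at step $p$ the $j_p$-th column has norm $\ge B/2$, so applying $A_{j_p}$ raises the $j_n$-th column to $\ge B/2$, whence again $c_{j_n}(\gamma)\ge B/2$. By (c), $c_{j_n}(\gamma)=\min_i c_i(\gamma)$, so $\min_i c_i(\gamma)\ge B/2$, while by (b) $\max_i c_i(\gamma)\le 2^nB$; dividing gives the claim.

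The main obstacle is the triangle-inequality lemma: once one adopts the column-$\ell^1$-norm viewpoint and notices that the columns of every element of the Rauzy semigroup satisfy these triangle inequalities, the rest is bookkeeping. I do not expect any genuine difficulty beyond that observation; in particular the constant $\epsilon_n$ comes out explicitly, of size $\asymp 2^{-n}$.
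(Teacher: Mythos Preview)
Your proof is correct and takes a genuinely different route from the paper's. The paper argues geometrically via the transpose action: it uses the identity $H_\gamma^-=(V_{(\gamma^t)^+})^\perp$ together with $\|\gamma e_i\|\ge\|\gamma\|\,d(E_i,H_\gamma^-)$, and then shows that the attracting point $V_{(\gamma^t)^+}$ lies in the compact region $\gamma_n^t\Delta\cap\overline{\Delta'}$, which is bounded away from $\partial\Delta$; since there are only finitely many possibilities for $\gamma_n^t$, this gives a uniform (but inexplicit) lower bound $d_n$ on $d(E_i,H_\gamma^-)$.

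Your approach is purely combinatorial: the key observation is the triangle inequality $c_i(\gamma)\le c_j(\gamma)+c_k(\gamma)$ for the column $\ell^1$-sums, proved by a one-line induction using the right-multiplication rule. From this you extract the explicit bound $\min_i c_i(\gamma)\ge 2^{-n-1}\max_i c_i(\gamma)$. This is shorter, entirely elementary, and yields an explicit constant $\epsilon_n\asymp 2^{-n}$, whereas the paper's argument only produces $\epsilon_n$ by a finiteness/compactness consideration. The paper's proof, on the other hand, gives more projective-geometric information (the location of $V_{(\gamma^t)^+}$ inside $\Delta'$), which is of independent interest though not needed for this lemma. Both verifications of the inductive step and of the case split on $j_n\in\{m,m^*\}$ versus $j_n\notin\{m,m^*\}$ go through exactly as you describe.
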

\begin{proof}The idea to show the lemma is to consider the transpose action of $\Gamma$. We list some basic facts on the action of $A_i^t$ on $\Delta.$ 
\begin{lem}
For every $i\ne j\in\lb{1,2,3},$ the following holds:
    \begin{enumerate}[(1)]
        \item $A_i^t$ preserves $\Delta.$
        \item $A_i^t$ preserves $\Delta',$ where $\Delta'$ is the \textbf{open} projective triangle in $\P(\R^3)$ with vertices $\begin{pmatrix} 0 \\ 1\\ 1 \end{pmatrix}, \begin{pmatrix} 1 \\ 0\\1 \end{pmatrix}, \begin{pmatrix} 1 \\ 1\\ 0 \end{pmatrix}.$
        \item $A_i^t E_j= E_j.$
        \item $A_i^t E_i=\begin{pmatrix} 1 \\ 1\\1 \end{pmatrix}\in\Delta'.$
    \end{enumerate}
\end{lem}

Combining (2)(3)(4) in the lemma above, we obtain
\begin{lem}\label{lem: claim in crucial lem proj geo}For any $i\in \{1,2,3\}$, for any $\gamma=A_{i_1}\cdots A_{i_{|\gamma|}} \in \Gamma$, if there exists some $i_j(\gamma)=i$, then $\gamma^t E_i\in \Delta'.$
\end{lem}

We are back to the proof of Lemma \ref{lem: crucial proj geo lem}. Let $\gamma\in \Gamma$ be an element such that the last $n$ digits of $\gamma$ are not the same. Due to \cref{lem:gv d v g-}, we have 
\begin{equation}\label{eqn: Rauzy proj geo}
      \|\gamma e_i\|\geq \|\gamma\|d(E_i,H_{\gamma^-}).  
\end{equation}
Recall the relation $(V_{(\gamma^t)^+})^\perp=H_{\gamma^-}.$ So it is sufficient to show the angle between $E_i$ and $V_{(\gamma^t)^+}$ is bounded away from $\pi/2$.  
Note that $\Delta$ has a special geometry property: $E_i^\perp$ is the span $\lb{E_j: j\neq i}$, which corresponds to an edge of $\Delta.$ In order to show the angle between $V_{(\gamma^t)^+}$ and $E_i$ is bounded away from $\pi/2,$ it suffices to show that $d(V_{(\gamma^t)^+},\partial \Delta)$ is lower bounded by a positive constant only depending on $n.$
To determine the position of $V_{(\gamma^t)^+},$ we use the fact that it is the attracting fixed point of $\gamma^t\gamma$ on the projective plane. Hence $V_{(\gamma^t)^+}\in \gamma^t\gamma\Delta.$
\begin{lem}
    $\gamma^t\gamma\Delta\sbs \gamma_n^t\Delta\cap \ol{\Delta'}.$
\end{lem}
\begin{proof}
    Since $A_i$ and $A_i^t$ preserve $\Delta$ for $i=1,2,3,$ we obtain $\gamma^t\gamma\Delta \sbs \gamma_n^t\Delta.$ Now we show $\gamma^t\gamma\Delta\sbs\ol{\Delta'}.$ There are two possible cases. If all of $A_1,A_2,A_3$ occur in $\gamma,$ then $\gamma^t E_i\in\Delta'$ for each $i=1,2,3$ by Lemma \ref{lem: claim in crucial lem proj geo}. Therefore $\gamma^t\gamma\Delta\sbs \gamma^t\Delta\sbs\Delta'.$

    Otherwise, there are at most two of $A_i$ occur in the $\gamma.$ Recall the assumption that the last $n$ digits of $\gamma$ are not the same. Without loss of generality, we can assume that both $A_1$ and $A_2$ occur in the last $n$ digits of $\gamma.$ Now we consider the region
    \[\nabla_z\defeq\lb{\begin{pmatrix}
        x\\y\\z
    \end{pmatrix}\in\Delta:z\leqslant x+y}.\]
    Then $A_1,A_2,A_1^t,A_2^t$ preserve $\nabla_z.$ Moreover, $A_1\Delta\sbs \nabla_z$ and $A_2\Delta\sbs\nabla_z.$ Since $A_3$ does not occur in $\gamma,$ we have $\gamma^t\gamma\Delta\sbs \nabla_z.$ Finally, we notice that the vertices of $\gamma_n^t\Delta$ satisfy $\gamma_n^tE_1,\gamma_n^tE_2\in\Delta'$ and $\gamma_n^tE_3=E_3$ by Lemma \ref{lem: claim in crucial lem proj geo}. This gives $\gamma^t\gamma\Delta\subset \gamma_n^t\Delta\cap\nabla_z=\gamma_n^t\Delta\cap\ol{\Delta'}.$
\end{proof}
\begin{figure}[!ht]
    \centering
    \includegraphics{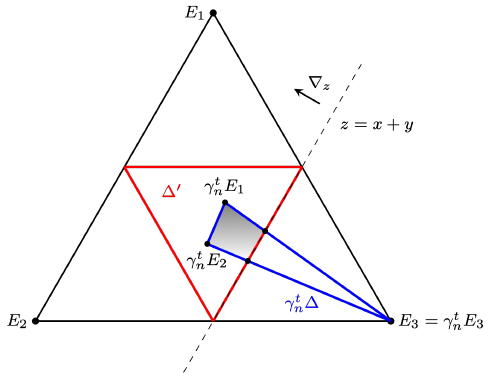}
    \caption{The pattern of $\gamma^t_n\Delta\cap \ol{\Delta'}$.}
    \label{fig: quadrilateral}
\end{figure}
We complete the proof of Lemma \ref{lem: crucial proj geo lem}. 
By an inductive argument on $n,$ we can assume that there are exactly two of $\lb{1,2,3}$ occur in $i_{|\gamma|-n+1},\cdots i_{|\gamma|}.$ Without loss of generality, we assume these two digits are $1$ and $2.$ By Lemma \ref{lem: claim in crucial lem proj geo}, the vertices of $\gamma_n^t\Delta$ satisfy
\[\gamma_n^tE_1\in\Delta',\quad \gamma_n^tE_2\in\Delta',\quad \gamma_n^tE_3=E_3. \]
Notice that set $\gamma_n^t\Delta \cap\overline{ \Delta'}$ is a closed quadrilateral (see Figure \ref{fig: quadrilateral}), which does not intersect the boundary of $\Delta$. Thus for every such $\gamma_n$, we have $ d(\partial\Delta, \gamma_n^t\Delta \cap \ol{\Delta'} )>0$. Since there are only finitely many such $\gamma_n$ for a given positive integer $n,$ there exists $d_n>0$ only depending on $n$ such that 
\[d(\partial\Delta, \gamma_n^t\Delta \cap \ol{\Delta'} )>d_n\]
for all such $\gamma_n^t$. Recalling $V_{(\gamma^t)^+}\in \gamma^t\gamma \Delta\subset \gamma_n^t\Delta \cap \Delta',$ we have $d(V_{(\gamma^t)^+},\partial \Delta)>d_n.$ 
\end{proof}

The following lemma shows some basic estimates of the diameter and the area of $\Delta_\gamma$. 

\begin{lem}\label{lem: proj geo lem}

There exists $C_2>1$ such that if the last $2$ digits of $\gamma$ are not the same, then
\begin{enumerate}[(1)]
    \item $\diam (\Delta_\gamma)\leq C_2\cdot \frac{\sigma_2(\gamma)}{\sigma_1(\gamma)}$;
    \item $\mathrm{Area}(\Delta_\gamma)\leq C_2\cdot  \sigma_1(\gamma)^{-3}$.
\end{enumerate}
\end{lem}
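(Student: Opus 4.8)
The plan is to read off both estimates from the columns $v_i\defeq\gamma e_i$ of $\gamma$. Their projective classes $[v_1],[v_2],[v_3]$ are exactly the vertices of $\Delta_\gamma=\gamma\Delta$, and in the affine chart $\wt\Delta\subset\{x+y+z=1\}$ the set $\Delta_\gamma$ is an honest (convex) Euclidean triangle with vertices $v_i/\langle v_i,\mathbf 1\rangle$, where $\mathbf 1=(1,1,1)$. The single genuine input is the lower bound $\|v_i\|=\|\gamma e_i\|\geq\epsilon_2\,\sigma_1(\gamma)$ for $i=1,2,3$, which is precisely \cref{lem: crucial proj geo lem} applied with $n=2$ — this is where the hypothesis ``the last $2$ digits of $\gamma$ are not the same'' enters. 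Everything else is bookkeeping, and I expect no serious obstacle.

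For \textbf{(1)}: since $\Delta_\gamma$ is convex, its diameter is realized at two of its vertices and equals the longest side; as $d$ and $d_E$ are bi-Lipschitz on $\Delta$, it suffices to bound $d([v_i],[v_j])$ for $i\ne j$. By the definition of the metric on $\P(\R^3)$,
\[
d([v_i],[v_j])=\frac{\|v_i\wedge v_j\|}{\|v_i\|\,\|v_j\|},\qquad \|v_i\wedge v_j\|=\|(\wedge^2\gamma)(e_i\wedge e_j)\|\leq \sigma_1(\gamma)\sigma_2(\gamma),
\]
the last inequality because $e_i\wedge e_j$ has unit norm and the largest singular value of $\wedge^2\gamma$ is $\sigma_1(\gamma)\sigma_2(\gamma)$. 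Combining with the lower bound on $\|v_i\|,\|v_j\|$ gives $d([v_i],[v_j])\leq\epsilon_2^{-2}\,\sigma_2(\gamma)/\sigma_1(\gamma)$, hence $\diam(\Delta_\gamma)\leq C_2\,\sigma_2(\gamma)/\sigma_1(\gamma)$ once $C_2$ absorbs $\epsilon_2^{-2}$ and the bi-Lipschitz constant.

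For \textbf{(2)}: I would use the elementary identity that three points $p_1,p_2,p_3$ in the plane $\{x+y+z=1\}$ span a Euclidean triangle of area $\tfrac{\sqrt3}{2}\,|\det M|$, where $M$ is the matrix with columns $p_1,p_2,p_3$; indeed $(p_2-p_1)\times(p_3-p_1)$ is then a multiple of $\mathbf 1$, and dotting with $p_1$ identifies that multiple as $\det M$, so the area $\tfrac12\|(p_2-p_1)\times(p_3-p_1)\|$ equals $\tfrac{\sqrt3}{2}\,|\det M|$ (check: $p_i=e_i$ gives $\tfrac{\sqrt3}{2}=\mathrm{Area}(\wt\Delta)$). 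Applying this to $p_i=v_i/\langle v_i,\mathbf 1\rangle$, for which $M=\gamma\,\mathrm{diag}(\langle v_1,\mathbf 1\rangle^{-1},\langle v_2,\mathbf 1\rangle^{-1},\langle v_3,\mathbf 1\rangle^{-1})$, and using $\det\gamma=1$ (each $A_i\in\SL_3(\R)$, so $\det\gamma=1$),
\[
\mathrm{Area}(\Delta_\gamma)=\frac{\sqrt3}{2}\prod_{i=1}^3\langle v_i,\mathbf 1\rangle^{-1}.
\]
Since $\gamma$ is a word in $A_1,A_2,A_3$ its entries are nonnegative, so $v_i$ has nonnegative coordinates and $\langle v_i,\mathbf 1\rangle=\|v_i\|_1\geq\|v_i\|_2=\|\gamma e_i\|\geq\epsilon_2\,\sigma_1(\gamma)$; hence $\mathrm{Area}(\Delta_\gamma)\leq\tfrac{\sqrt3}{2}\,\epsilon_2^{-3}\,\sigma_1(\gamma)^{-3}$. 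Taking $C_2>1$ large enough to dominate $\epsilon_2^{-2}$ times the bi-Lipschitz constant and $\tfrac{\sqrt3}{2}\epsilon_2^{-3}$ completes both parts.

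The only points needing a little care are: (a) that the diameter of the convex triangle $\Delta_\gamma$ is attained on its vertices and the (harmless) passage between $d$ and $d_E$; (b) pinning down the constant $\tfrac{\sqrt3}{2}$ in the area--determinant identity; and (c) the comparison $\langle v_i,\mathbf 1\rangle\asymp\|\gamma e_i\|$, which uses nonnegativity of the entries of $\gamma$. The substantive geometric content is already \cref{lem: crucial proj geo lem}, so there is no real obstacle beyond it.
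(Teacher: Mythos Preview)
Your proof is correct and follows essentially the same approach as the paper. For (1) the arguments are identical; for (2) your determinant identity $\mathrm{Area}=\tfrac{\sqrt3}{2}\,|\det M|$ is just an explicit repackaging of the paper's formula $\mathrm{Area}(xyz)=\|x\wedge y\wedge z\|/(2d_E(o,\wt\Delta))$ (since $d_E(o,\wt\Delta)=1/\sqrt3$ and $\|x\wedge y\wedge z\|=|\det M|$), and both then feed in $\omega(\gamma e_i)=\langle v_i,\mathbf 1\rangle\geq\|\gamma e_i\|\geq\epsilon_2\sigma_1(\gamma)$ from \cref{lem: crucial proj geo lem}.
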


\begin{proof}
\begin{enumerate}[(1)]
    \item It suffices to show $d(\gamma E_i, \gamma E_j)\leqslant C_2\cdot  \frac{\sigma_2(\gamma)}{\sigma_1(\gamma)}$. 
    We have 
    \begin{eqnarray*}d(\gamma E_i, \gamma E_j)&=&\frac{\|\gamma e_i\wedge \gamma e_j\|}{\|\gamma e_i\|\cdot \|\gamma e_j\|}\leq \frac{\sigma_1(\gamma)\sigma_2(\gamma)\|e_i\wedge e_j\|}{\|\gamma e_i\|\cdot \|\gamma e_j\|}\\
    &\leq & \frac{\sigma_1(\gamma)\sigma_2(\gamma)}{\epsilon_2^2\sigma_1(\gamma)^2} \text{ (by Lemma \ref{lem: crucial proj geo lem}) 
    }\\
    &=&\epsilon_2^{-2}\cdot \frac{\sigma_2(\gamma)}{\sigma_1(\gamma)}.
    \end{eqnarray*}

\item 
We use the following elementary geometric fact: 
Let $x,y,z$ be three points in $\R^3\sm\{o\}$, then the area of the triangle formed by $x,y,z$ (which we denote by $(xyz)$) is equal to 
\[\mathrm{Area}(xyz)=\frac{\|x\wedge y\wedge z\| }{2d_E(o,(xyz))}, \]
where $d_E(o,(xyz))$ is the distance from the origin $o$ to the two plane of $(xyz)$. This is because the numerator gives the volume of the polyhedron of $oxyz$.

For $\gamma\in\Gamma$, recall that the area of $\Delta_\gamma$ is understood in the area of the corresponding subset of $\wt\Delta$ in the euclidean space.
Let $x_\gamma,y_\gamma,z_\gamma$ be the corresponding points in $\wt\Delta$ of  vertices of $\Delta_\gamma.$ Then 
\[ \mathrm{Area}(x_\gamma y_\gamma z_\gamma)=\frac{\|x_\gamma\wedge y_\gamma\wedge z_\gamma \|}{2d(o,(x_\gamma y_\gamma z_\gamma))}=\frac{\|x_\gamma\wedge y_\gamma\wedge z_\gamma \|}{2d(o,\wt\Delta)}. \]
We know that
\[x_\gamma=\gamma e_1/\omega(\gamma e_1), \]
with $\omega(v)=v_1+v_2+v_3$, similarily for $y_\gamma,z_\gamma$. Therefore, 
\[ \|x_\gamma\wedge y_\gamma\wedge z_\gamma \|=\|\gamma e_1\wedge \gamma e_2\wedge \gamma e_3 \|/\prod_{1\leq i\leq 3}\omega(\gamma e_i)  \]
We actually have $\|\gamma e_1\wedge \gamma e_2\wedge \gamma e_3 \|=\|e_1\wedge e_2\wedge e_3 \|=1$. Due to $\gamma E_i\in \Delta $, we also have 
\[\omega(\gamma e_i)\geq\|\gamma e_i\| \] 
Then by Lemma \ref{lem: crucial proj geo lem} we get the proof of (2).\qedhere
\end{enumerate}
\end{proof}

The following geometric lemma is essentially proved in  \cite[Lemma 4.1]{pollicott_upper_2021}.

\begin{lem}\label{lem: 4.1 PS}For any $\delta>0$, there exists $c_\delta>0$ such that for any $\gamma\in \Gamma$, there exists a finite open cover $\{D_i(\gamma): i=1,\cdots,k\}$ of $\Delta_\gamma$ with $\diam D_i(\gamma)\leqslant\diam \Delta_\gamma$ such that $$\sum_{i}\diam  ^{1+\delta}D_i(\gamma)\leq c_\delta \cdot \diam^{1-\delta}\Delta_\gamma\cdot \mathrm{Area}^\delta\Delta_\gamma.$$
\end{lem}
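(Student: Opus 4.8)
The plan is to establish this by elementary planar geometry, using only that each $\Delta_\gamma$ is a nondegenerate triangle (nondegenerate since the generators $A_i$ are invertible), so that in particular the claim does not really depend on $\gamma$. Since the Euclidean and the projective metrics on $\wt\Delta$ are bi-Lipschitz equivalent and the inequality we want is insensitive to such constants, I would work throughout in the affine plane $P=\lb{(x,y,z)\in\R^3:x+y+z=1}$ containing $\wt\Delta$, with its Euclidean metric, and abbreviate $d\defeq\diam\Delta_\gamma$ and $A\defeq\mathrm{Area}\,\Delta_\gamma>0$.

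First I would record the shape of a triangle in terms of $d$ and $A$. The diameter $d$ of $\Delta_\gamma$ equals the length of its longest side $e$; if $h\defeq 2A/d$ denotes the altitude onto $e$, then by standard facts $0<h\leq d$ and the foot of that altitude lies on $e$ itself (otherwise some other side would exceed $e$ in length). Hence $\Delta_\gamma$ is contained in a closed rectangle $R$ of side lengths $d$ and $h$.

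Next I would cover $R$, and therefore $\Delta_\gamma$, by open disks with controlled number and diameter, distinguishing two cases so as to respect the hard constraint $\diam D_i(\gamma)\leq d$. If $h\geq d/2$, then $R$ sits inside a $d\times d$ square, which is covered by an absolute number $N_0$ of open disks of diameter $<d$ (e.g.\ slightly fatten the four quarter-squares); since here $A=\tfrac12 dh\geq d^2/4$, this gives
\[\sum_i\diam^{1+\delta}D_i(\gamma)\leq N_0\,d^{1+\delta}\leq N_0\,4^\delta\,d^{1-\delta}A^\delta.\]
If $h< d/2$, I would place at most $2d/h$ open disks of radius $h$ along the midline of $R$ at spacing $h$; a one-line computation shows every point of $R$ lies within $h/\sqrt2<h$ of some centre, so these disks, of diameter $2h\leq d$, cover $R$, and
\[\sum_i\diam^{1+\delta}D_i(\gamma)\leq \frac{2d}{h}\,(2h)^{1+\delta}=2^{2+\delta}\,d\,h^{\delta}=2^{2+2\delta}\,d^{1-\delta}A^\delta,\]
where in the last step $h=2A/d$. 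Taking $c_\delta\defeq\max\lb{N_0\,4^\delta,\ 2^{2+2\delta}}$ finishes the proof.

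I do not expect a genuine obstacle: the argument is routine. The only points requiring attention are (i) the thin/fat dichotomy, which is unavoidable because a single enclosing disk would in general violate $\diam D_i(\gamma)\leq\diam\Delta_\gamma$, and (ii) bookkeeping the exponents — both the number of disks ($\asymp d/h$) and their diameter ($\asymp h=2A/d$) feed into the sum, and the powers must be combined carefully so that the estimate lands exactly on $d\,h^{\delta}=d^{1-\delta}A^\delta$ rather than being off by a power of $d$ or $A$.
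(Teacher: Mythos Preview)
Your proposal is correct and follows essentially the same approach as the paper: the paper's proof is a one-line reference to \cite{pollicott_upper_2021} stating that $\Delta_\gamma$ can be covered by $O(\diam^2\Delta_\gamma/\mathrm{Area}\,\Delta_\gamma)$ disks of diameter $O(\mathrm{Area}\,\Delta_\gamma/\diam\Delta_\gamma)$, which is precisely your thin-case construction (about $d/h\asymp d^2/A$ disks of diameter $\asymp h\asymp A/d$). Your fat/thin dichotomy to enforce the constraint $\diam D_i(\gamma)\leq\diam\Delta_\gamma$ is a useful clarification that the paper's terse argument leaves implicit.
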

\begin{proof}
As in the proof of \cite[Lemma 4.1]{pollicott_upper_2021}, we know that every $\Delta_{\gamma}$ can be covered by $O(\diam^2(\Delta_\gamma)/\mathrm{Area}(\Delta_\gamma))$ disks $\{D_i:i=1,\cdots,k\}$ of diameter $O(\mathrm{Area}(\Delta_\gamma)/ \diam(\Delta_\gamma))$, then we get the proof. \end{proof}

We back to the proof of $\dim_{\mr H}(X)\leq s_{\mathrm{A}}(\Gamma)$. Recall $P_\Gamma(s)=\sum_{\gamma\in\Gamma}\varphi_s(\gamma)$ is the Poincar\'e series of $\Gamma$, where $\varphi_s(\gamma)$ is defined by
\[\varphi_s(\gamma)=\case{
    &\Bsb{\frac{\sigma_2}{\sigma_1}}^s(\gamma), &0<s\leq 1;\\
    &\Bsb{\frac{\sigma_2}{\sigma_1}}(\gamma)\Bsb{\frac{\sigma_3}{\sigma_1}}^{s-1}(\gamma),& 1<s\leq 2.}\]
By definition of $s_{\mathrm{A}}$, it suffices to show if $P_\Gamma(s)<\infty$ then for any $\epsilon>0$, $\dim X\leq s+\epsilon$.

\begin{defi}\label{def: nice word}
    For $x\in X$, we say $x$ is \textit{nice} if every $(i_1(x),i_2(x),\cdots)\in\Phi^{-1}(x)\in \Lambda^\N$ is not ending by a single element in $\Lambda=\{1,2,3\}$.
\end{defi}
We remark that if $x$ is nice, then $x$ is uniquely coding. This is because the only possibility of $\Phi(\bi)=\Phi(\bi')$ is 
\[\bi= (w,j_1,j_2,j_2,\cdots)\text{ and }\bi'=(w,j_2,j_1,j_1,\cdots),\]
where $w\in\Lambda^N$ for some $N\in\NN$ and $j_1\ne j_2\in\Lambda.$

Then the Rauzy Gasket $X$ can be decomposed as the set of nice points, which we denote it by $X_{\mathrm{nice}}$, and a countable set. It suffices to show the Hausdorff outer measure $H^{s+\epsilon}(X_{\mathrm{nice}})$ is $0$.


We consider
\[\Gamma_m\defeq\lb{\gamma\in\Gamma:\text{the last $2$ digits of $\gamma$ are not the same and }\diam \Delta_\gamma\leqslant 1/m}.\]
Now we construct two families of covers $\cU_{m}$ and $\cU_m'$ for $m\in\N$ as 
\[\cU_m\defeq\lb{D_i(\gamma):\gamma\in\Gamma_m},\quad\cU_m'\defeq\lb{\Delta_\gamma:\gamma\in\Gamma_m},\]
where $\{D_i(\gamma)\}$ is the finite open cover of $\Delta_\gamma$ we obtained from Lemma \ref{lem: 4.1 PS}.
Let
\[Y\defeq\bigcap_{m=1}^\infty \bigcup_{U\in\cU_{m}} U\quad\text{and}\quad Y'\defeq\bigcap_{m=1}^\infty \bigcup_{U\in \cU'_{m}} U.\]
Then the sequence of covers $\cU_{m}$ (resp. $\cU_m'$) is a family of Vitali covers of the set $Y$ (resp. $Y'$).\footnote{Recall that a Vitali cover $\mathcal V$ of a set $E$ is a family of sets so that, for every $\delta>0$ and every $x \in E$, there is some $U$ in the family $\cV$ with $\diam U <\delta$ and $x\in U$.} Notice that $Y$ contains $Y'$ by the construction of $D_i(\gamma)$'s.

\begin{lem} \label{lem:Xnice}
$Y\supset  X_{\mathrm{nice}}$.
\end{lem}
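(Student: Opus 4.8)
The plan is to deduce $Y\supset X_{\mathrm{nice}}$ from the (easier) inclusion $Y'\supset X_{\mathrm{nice}}$, using that $Y\supset Y'$: since $\{D_i(\gamma)\}$ covers $\Delta_\gamma$ for each $\gamma$, we have $\bigcup_{U\in\cU'_m}U\subset\bigcup_{U\in\cU_m}U$ for every $m$, hence $Y'\subset Y$. So it suffices to show that every nice point lies in $Y'$.

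Fix $x\in X_{\mathrm{nice}}$ and pick a coding $\bi=(i_1,i_2,\dots)\in\Phi^{-1}(x)$, so that $x=\Phi(\bi)$. Writing $\gamma_n\defeq A_{i_1}\cdots A_{i_n}$, we have $x\in A_{i_1}\cdots A_{i_n}\Delta=\Delta_{\gamma_n}$ for every $n$. The two things I would check are: (i) by Definition~\ref{def: nice word}, $\bi$ is not eventually constant, so there are infinitely many indices $n\geq 2$ with $i_{n-1}\neq i_n$, and for each such $n$ the last two digits of $\gamma_n$ are not the same; and (ii) by Lemma~\ref{lem: diam Rauzy}, $\diam\Delta_{\gamma_n}\to0$ as $n\to\infty$, so for every $m\in\NN$ there is $N_m$ such that $\diam\Delta_{\gamma_n}\leq 1/m$ for all $n\geq N_m$.

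Combining (i) and (ii): given $m$, pick an index $n\geq N_m$ with $i_{n-1}\neq i_n$ (possible since such indices are infinite in number, hence unbounded). Then $\gamma_n\in\Gamma_m$, so $\Delta_{\gamma_n}\in\cU'_m$, and $x\in\Delta_{\gamma_n}\subset\bigcup_{U\in\cU'_m}U$. Since $m$ was arbitrary, $x\in Y'\subset Y$, which is what we want.

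The argument has no genuine obstacle; the only subtlety is that the two conditions in the definition of $\Gamma_m$ must hold for the \emph{same} $n$, which is immediate because one of them holds for all sufficiently large $n$ while the other holds for an unbounded set of $n$.
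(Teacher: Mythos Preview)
Your proof is correct and follows essentially the same approach as the paper: both arguments pass through $Y'\subset Y$, pick a coding of a nice point, use that the coding is not eventually constant to produce infinitely many indices $n$ with $i_{n-1}\neq i_n$, and then invoke Lemma~\ref{lem: diam Rauzy} to ensure the diameter condition in $\Gamma_m$ is eventually satisfied. Your explicit remark that the two defining conditions of $\Gamma_m$ must hold for the \emph{same} $n$, and why this is automatic, is a nice clarification that the paper leaves implicit.
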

\begin{proof}We claim that for any $\bi \in \{1,2,3\}^\N$ which is not ending by a single element, the point $x=\Phi(\bi)$ is contained in $Y$. Since $\bi=(i_1(x),\cdots )$ is not ending by a single element in $\{1,2,3\}$, there exists infinitely many $\ell$ such that $i_{\ell-1}(x), i_\ell(x)$ are not the same (otherwise it will be ended by only one element). Collect all such $
\ell$ and consider all the elements $\gamma=A_{i_1}
\cdots A_{i_{\ell}}$. Then we get there are infinitely many $\gamma\in \Gamma$ such that the last two digits of $\gamma$ are not the same and $x\in \Delta_\gamma$. By \cref{lem: diam Rauzy}, $\diam \Delta_\gamma\to 0$ as $\ell$ tending to infinity. Therefore $x\in Y'\subset Y$.
\end{proof}
As a consequence, 
it suffices to show the Hausdorff outer measure $$H^{s+\epsilon}(Y)=\lim_{\delta\to 0}H^{s+\epsilon}_\delta(Y)=0.$$ 
Recall that $\diam U\leqslant 1/m$ for every $U\in\cU_m.$
Then for $s\geq 1$, we have
\begin{eqnarray*}
H^{s+\epsilon}(Y)&\leq& \limsup_{\delta\to 0}H^{s+\epsilon}_\delta(Y)~\leqslant ~\limsup_{m\to \infty} 
\sum_{U\in \cU_{m}}\diam (U)^{s+\epsilon}\\
&= & \limsup_{m\to \infty}\sum_{\gamma\in\Gamma_m} \sum_i\diam (D_i(\gamma))^{s+\epsilon}\\
&\leq & \limsup_{m\to \infty}  c_{s+\epsilon-1}\sum_{\gamma\in\Gamma_m}\diam^{2-s-\epsilon}\Delta_\gamma\cdot \mathrm{Area}^{s+\epsilon-1}\Delta_\gamma \qquad (\text{by Lemma \ref{lem: 4.1 PS}}) \\
&\leq & \limsup_{m\to \infty}c_{s+\epsilon-1}C_2\sum_{\gamma\in\Gamma_m} \varphi_{s+\epsilon}(\gamma)\text{ 
\quad(by Lemma \ref{lem: proj geo lem} and the definition of $\varphi_{s}$}).
\end{eqnarray*}

Since $\sum \varphi_{s+\epsilon}(\gamma)<\infty$, the right-hand side of the last inequality is $0$ when $m\to \infty$, which is exactly what we want. For the case $s<1$ (we actually know $s\sim 1.72$ by numerical test), we do not need to use Lemma \ref{lem: 4.1 PS} and consider $Y'$ instead of $Y$ by the same argument. We still get the proof. 

\subsection{The lower bound of the Hausdorff dimension}\label{sec.low rauzy}

In this section, we will show that $\dim X\geqslant s_{\mathrm{A}}(\Gamma).$ Note that $X$ contains the boundary of $\Delta,$ which has the Hausdorff dimension equal to $1.$ Therefore, we already have an a priori estimate that $\dim X \geq 1$, which allows us to assume, without loss of generality, that $s_{\mathrm{A}}(\Gamma) > 1$.
In other words, we assume the existence of a value $s > 1$ for which the series $\sum_{\gamma}\varphi_s(\gamma)$ diverges. This technical assumption simplifies subsequent discussions.
Recall that for every exact dimensional probability measure $\mu$ supported on $X,$ we have $\dim\mu\leqslant \dim X.$ The inequality $\dim X\geqslant s_{\mathrm{A}}(\Gamma)$ is a direct consequence of the following variational principle of the affinity exponent.

\begin{lem}\label{lem: Rauzy variation}
    Let $s>1$ such that the series $\sum_{\gamma\in\Gamma}\varphi_s(\gamma)$ diverges, then for every $\ve>0,$ there exists a finitely supported measure $\nu$ supported on $\Gamma$ and a $\nu$-stationary measure $\mu$ supported on $X$ satisfying $\dim \mu\geq s-\ve$.
\end{lem}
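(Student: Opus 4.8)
The plan is to follow the same strategy as in the Anosov case (Section 4), adapting the free-semigroup construction to the non-uniformly hyperbolic Rauzy semigroup $\Gamma = \Gamma_{\mathscr R}$. Since $\Gamma$ is already a free semigroup on the three generators $A_1,A_2,A_3$, I do not need the delicate geometric group theory of Section 3; instead the difficulty is that word length no longer controls the norm $\sigma_1(\gamma)$, because the action degenerates near the corners of $\Delta$ (points coded by a single repeated symbol). To handle this I will use a \emph{prefix argument} in the spirit of \cite{HJX}: rather than selecting words of a fixed length, I select words whose $\varphi_s$-weight lies in a dyadic-type window, and I prune those whose \emph{last two digits are equal} so that the estimates of Lemma~\ref{lem: proj geo lem} and (crucially) the sub-multiplicativity of singular values remain available. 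First I would fix $s>1$ with $\sum_\gamma \varphi_s(\gamma)=\infty$, pick $\ve>0$ small, and for a large parameter $N$ set
\[
S_N \defeq \lb{\gamma\in\Gamma:\ \varphi_s(\gamma)\geq e^{-N},\ \text{last two digits of }\gamma\text{ not the same}}
\]
cut down to a generation where the weights are comparable; divergence of the Poincaré series forces $\#S_N \geq e^{(1-\ve)N}$ for infinitely many $N$ after a pigeonhole over weight windows (the "last two digits" pruning costs only a bounded factor, since from any word one can append one generator to break a repeated tail).

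Next I would let $\nu$ be the uniform measure on $S_N$ (or on a self-concatenation $S_N^{*1}$ — one copy suffices because $\Gamma$ is genuinely free, so $h_{\mathrm{RW}}(\nu)=\log\#S_N\geq(1-\ve)N$ automatically). The key analytic input is an \emph{almost-additivity / bounded-distortion} statement: for a concatenation $\gamma = \gamma^{(1)}\cdots\gamma^{(k)}$ with each $\gamma^{(j)}\in S_N$, one has
\[
\Bigl|\log\sigma_i(\gamma) - \sum_{j=1}^k \log\sigma_i(\gamma^{(j)})\Bigr| \leq kC
\]
for $i=1,2,3$, with $C$ independent of $k$ and $N$. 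This is where Lemma~\ref{lem: crucial proj geo lem} does the heavy lifting: since each factor's last two digits differ, $\|\gamma^{(j)}e_i\|\geq \epsilon_2\,\sigma_1(\gamma^{(j)})$, which means the cone positivity is uniform and the product of singular values is distorted by at most a bounded multiplicative constant per factor (this is the Rauzy analogue of Lemma~\ref{lem: almost additivity}, proved directly from positivity of matrix entries and the geometry of $\Delta$). From this I get that the Lyapunov exponents of $\nu$ satisfy $\chi_i(\nu)=\lambda_1(\nu)-\lambda_{i+1}(\nu) \geq cN$ for $i=1,2$ (using $s>1$, so $\varphi_s$ genuinely involves $\sigma_3/\sigma_1$, forcing a definite gap $\sigma_2>\sigma_3$ as well), and that $\psi_s(\lambda(\nu))\leq (1+\ve)N$, where $\psi_s$ is the linear functional whose critical exponent is $s_{\mathrm A}(\Gamma)$ — i.e. $\psi_s = \psi_1$ contribution plus $(s-1)$ times the $\log(\sigma_1/\sigma_3)$ term.

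Then I would invoke the dimension theory: $\nu$ has a unique stationary (Furstenberg) measure $\mu$ on $\P(\R^3)$, supported on $X$ because $\supp\nu\subset\Gamma$ preserves $\Delta$ and $X$ is the attractor; $\mu$ is exact-dimensional by \cite{ledrappier_exact_2021,rapaport_exact_2021} (or by the dimension formula of \cite{LPX}, Theorem~\ref{thm:lyapunov}, once one checks exponential separation — which holds here since $S_N$ freely generates and one has the bounded-distortion estimate), and its dimension equals $\dim_{\mathrm{LY}}\mu$. For the entropy: because $\Gamma$ is a free semigroup and $\nu$ is supported on a free generating set, $h_{\mathrm{F}}(\mu,\nu)$ need not a priori equal $h_{\mathrm{RW}}(\nu)$, but one still has $h_{\mathrm F}(\mu,\nu)\leq h_{\mathrm{RW}}(\nu)$ always, and for the \emph{lower} bound on $\dim_{\mathrm{LY}}\mu$ I need a lower bound on $h_{\mathrm F}$ — here I would use that the projection $\mathcal F(\R^3)\to\P(\R^3)$ has trivial fibres over the limit set (the Rauzy limit set is the graph of the limit map, as in Lemma~\ref{lem.trivial.fibre}, using that $\Gamma$ is strongly irreducible and proximal on $\Delta$), hence $h_{\mathrm F}(\mu,\nu)=h_{\mathrm F}(\mu_{\mathcal F},\nu)=h_{\mathrm{RW}}(\nu)\geq(1-\ve)N$. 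Plugging $h_{\mathrm F}(\mu,\nu)\geq(1-\ve)N$, $\chi_1,\chi_2\geq cN$, and $\psi_s(\lambda(\nu))\leq(1+\ve)N$ into the definition \eqref{eqn: Lyapunov dimension} of $\dim_{\mathrm{LY}}$, a short computation (identical in form to the end of the proof of Proposition~\ref{prop: flag variation}: perturb the coefficients $a_{ij}$ defining $\psi_s$ down by $\ve$) gives $\dim_{\mathrm{LY}}\mu \geq s-O(\ve)$, and after rescaling $\ve$ we are done.

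\textbf{Main obstacle.} The crux is the bounded-distortion estimate for concatenations, i.e. establishing the Rauzy analogue of Lemma~\ref{lem: almost additivity} \emph{without} a uniform Anosov hypothesis. The Anosov proof uses the singular-value gap $\sigma_p/\sigma_{p+1}\geq ce^{c|\gamma|}$ along the whole orbit; here there is no such uniform gap, and a naive concatenation of words can lose all contraction if the factors align badly near a corner of $\Delta$. The "last two digits not the same" pruning, combined with Lemma~\ref{lem: crucial proj geo lem}, is exactly what rescues this: it forces $V^+_{\gamma^{(j)}}$ and the repelling hyperplane $H^-_{\gamma^{(j)}}$ to stay uniformly inside the interior of (the relevant faces of) $\Delta$, so that consecutive factors are in "general position" with a quantitative bound depending only on the pruning depth $2$. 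Verifying that this uniform transversality indeed yields per-factor bounded distortion of $\sigma_1,\sigma_2,\sigma_3$ — and that the pruning does not destroy the exponential count $\#S_N\geq e^{(1-\ve)N}$ — is the technical heart of the argument; everything else is bookkeeping parallel to Section 4.
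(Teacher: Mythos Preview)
Your overall strategy is right and matches the paper's, but there are two genuine gaps that the paper addresses with machinery you have not invoked.

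\textbf{Entropy/freeness.} The claim ``$\Gamma$ is genuinely free, so $h_{\mathrm{RW}}(\nu)=\log\#S_N$ automatically'' is false. A subset $S$ of a free semigroup freely generates a free sub-semigroup only if no element of $S$ is a prefix of another; your $S_N$ (elements with $\varphi_s$ in a window, last two digits different) certainly has prefix relations, because non-uniform hyperbolicity allows elements of very different word lengths to have comparable $\varphi_s$. The paper fixes this by (i) pigeonholing on the full Cartan projection $\kappa(\gamma)/n$ (not just on $\varphi_s$), producing an $(n,\beta,x)$-approximate set, and (ii) proving a multiplicity bound (Lemma~\ref{lem:multiplicity j}): if both $j$ and $jj''$ lie in such a set and end with $A_1A_2A_3$, then $\sigma_1(j'')\leqslant Ce^{2\beta n}$, so by discreteness there are at most $e^{C\beta n}$ extensions. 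Passing to minimal elements then costs only a factor $e^{-C\beta n}$, and this is what gives a prefix-free set $\cJ_1$ with $\#\cJ_1\geqslant e^{(1-O(\beta))N}$.

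\textbf{Bounded distortion for $\sigma_2$.} Lemma~\ref{lem: crucial proj geo lem} and cone positivity give $\sigma_1(\gamma\gamma')\asymp\sigma_1(\gamma)\sigma_1(\gamma')$, but they say nothing about $\sigma_2$: the action on $\wedge^2\RR^3$ does \emph{not} preserve a positive cone (e.g.\ $\wedge^2 A_1$ has a negative entry), so your ``positivity of matrix entries'' argument does not control $\sigma_1\sigma_2=\|\wedge^2\gamma\|$. The paper obtains almost-additivity of the full Cartan projection by passing through the Schottky machinery: it appends $A_1A_2A_3$, applies Abels--Margulis--Soifer to make elements $(r_0,\epsilon_0)$-loxodromic in \emph{both} $\RR^3$ and $\wedge^2\RR^3$, pigeonholes to an $\epsilon_0$-narrow subfamily, and then invokes Lemma~\ref{lem:cartan sum} to get $\|\kappa(g_1\cdots g_\ell)-\sum\kappa(g_i)\|\leqslant \ell C_r$. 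This is exactly the substitute for Lemma~\ref{lem: almost additivity} in the absence of uniform hyperbolicity, and it is not bookkeeping.

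Two smaller points: you never secure Zariski density of $G_\nu$ (needed for Theorem~\ref{thm:lyapunov}); the paper adjoins finitely many elements $\theta_i g^m$ from a narrow Zariski-dense Schottky subgroup, and then uses a mixture $\nu=(1-\beta)\nu_1+\beta\nu_2$ together with a concavity lemma to show this perturbation costs only $O(\beta)$ in random walk entropy. And the ``trivial fibre'' argument for $h_{\mathrm F}=h_{\mathrm{RW}}$ on the Rauzy gasket is correct in spirit, but note it is \emph{not} Lemma~\ref{lem.trivial.fibre} (there is no Anosov limit map here); the paper proves it directly from unique coding of nice points.
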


Our idea is a stopping time argument which is partially inspired by the study of the variational principle of iterated function systems \cite{feng2009dimension} and self-affine measures \cite{morris_variational_2023}. Specifically, our strategy is to combine our dimension formula of stationary measures with a modification of the proof of Theorem 3.1 in \cite{morris_variational_2023}. The main difference of our proof from \cite{morris_variational_2023} is that due to the presence of unipotent elements, the word lengths of elements may lose control. So instead of considering the word length, we only consider $\kappa(\gamma).$ This is enough to estimate the Lyapunov exponents. Then we use a combinatorial argument to make elements generating a free semigroup. This generates a large enough random walk entropy. Therefore, we can find a stationary measure with a large dimension approximating the affinity exponent. A similar argument to address the issue of words with uncontrolled length also appears in \cite{HJX}.


In order to apply the dimension formula (Theorem \ref{thm:lyapunov}), we need to show the Zariski density of $\Gamma$ at first.
\begin{lem}
The semigroup $\Gamma$ is Zariski dense in $\SL_3(\R)$.
\end{lem}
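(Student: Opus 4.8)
The plan is to identify the Zariski closure $\bH$ of $\Gamma$ by computing its Lie algebra. Write $A_i = I + N_i$, where $N_1 = E_{12}+E_{13}$, $N_2 = E_{21}+E_{23}$, $N_3 = E_{31}+E_{32}$; each $N_i$ is nilpotent with $N_i^2 = 0$, so $A_i^k = I + kN_i$ for every $k \geq 1$. In particular $\bH$ contains infinitely many points of the affine line $t \mapsto I + tN_i$ (an algebraic curve isomorphic to $\mathbb{A}^1$), hence contains the whole one-parameter unipotent subgroup $u_i(t) := \exp(tN_i) = I + tN_i$, $t\in\R$. Since the Zariski closure of a sub-semigroup is again a sub-semigroup and $u_i(-1) = A_i^{-1}\in\bH$, the set $\bH$ is a Zariski-closed \emph{subgroup} of $\SL_3(\R)$ (namely the Zariski closure of the group generated by $A_1,A_2,A_3$).

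It then suffices to show that the Lie subalgebra $\fg \subseteq \fsl_3(\R)$ generated by $N_1, N_2, N_3$ is all of $\fsl_3(\R)$; indeed $\fg \subseteq \fh := \operatorname{Lie}(\bH)$, so this forces $\dim\bH = 8$ and hence $\bH = \SL_3(\R)$ by connectedness of $\SL_3$. The equality $\fg = \fsl_3(\R)$ is a finite computation. One checks that $N_1,N_2,N_3$ together with $[N_1,N_2] = (E_{11}-E_{22})+E_{13}-E_{23}$, $[N_1,N_3] = (E_{11}-E_{33})+E_{12}-E_{32}$ and $[N_2,N_3] = (E_{22}-E_{33})+E_{21}-E_{31}$ are six linearly independent elements, and that two further iterated brackets — for instance $[N_3,[N_1,N_2]]$ and $[N_1,[N_2,N_3]]$ — are not contained in their span. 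This produces $8 = \dim\fsl_3(\R)$ linearly independent elements of $\fg$, so $\fg = \fsl_3(\R)$.

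I do not expect a genuine obstacle; the only points requiring care are the passage from semigroup to group (handled above by producing the full unipotent one-parameter subgroups, which in particular contain the inverses $A_i^{-1}$) and the bookkeeping in the bracket computation. If one prefers to avoid discussing the Zariski closure of a semigroup at all, one can argue directly: the subgroup of $\SL_3(\R)$ generated by all the $u_i(t)$ is the connected subgroup with Lie algebra $\fg = \fsl_3(\R)$, i.e.\ $\SL_3(\R)$ itself, and it is contained in $\bH \subseteq \SL_3(\R)$, so $\bH = \SL_3(\R)$.
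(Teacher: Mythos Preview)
Your proof is correct and follows essentially the same strategy as the paper: both arguments observe that each unipotent generator $A_i=I+N_i$ forces the full one-parameter unipotent subgroup $\exp(tN_i)$ to lie in the Zariski closure $\bH$, and then verify by explicit bracket computations that $N_1,N_2,N_3$ generate $\fsl_3(\R)$ as a Lie algebra. Your write-up is slightly more careful about the passage from semigroup to group (the paper simply asserts that $\bH$ is an algebraic group and cites Borel for the one-parameter subgroup), and you choose a different but equally valid set of iterated brackets to exhibit a basis; otherwise the two proofs coincide.
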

\begin{proof}
	Let $\bH$ be the Zariski closure of semigroup $\Gamma$ in $\SL_3(\R)$, which is a real algebraic group.
    Let $\frak h$ be the Lie algebra of the connected component of $\bH$.
	By \cite[Page 106, Remark]{borel_linear_1991}, the fact that $A_1$ is unipotent implies that the one-parameter unipotent group $\left\{s\in\R,\  \begin{pmatrix}
	1 & s & s \\ 0 & 1 & 0\\ 0 & 0& 1
	\end{pmatrix}\right\}$ is in $\bH$ and  $X_1=\begin{pmatrix}
	0 & 1 & 1\\ 0 & 0 &0 \\ 0 & 0 & 0
	\end{pmatrix}$ is in $\frak h$. Similarly, the nilpotent elements $X_2=\begin{pmatrix}
	0 & 0 & 0\\ 1 & 0 &1 \\ 0 & 0 & 0
	\end{pmatrix}$, $X_3=\begin{pmatrix}
	0 & 0 & 0\\ 0 & 0 &0 \\ 1 & 1 & 0
	\end{pmatrix}$ are also in $\frak h$. Then we can play with these elements in the Lie algebra $\frak h$ and prove that they generate all $\frak{sl}_3$. We have
	$Y_3=[X_1,X_2]=\begin{pmatrix}
	1 & 0 & 1\\ 0 & -1 &-1 \\ 0 & 0 & 0
	\end{pmatrix}$, $Y_1=[X_2,X_3]=\begin{pmatrix}
	0 & 0 & 0\\ 1 & 1 &0 \\ -1 & 0 & -1
	\end{pmatrix}$, $Y_2=[X_3,X_1]=\begin{pmatrix}
	-1 & -1 & 0\\ 0 & 0 &0 \\ 0 & 1 & 1
	\end{pmatrix}$ also in $\frak h$. Then $[Y_1,X_1]+Y_2+Y_3+2X_1=\begin{pmatrix}
	0 & 0 & 4\\ 0 & 0 &0 \\ 0 & 0 & 0
	\end{pmatrix} $, $[Y_2,X_2]+Y_3+Y_1+2X_2=\begin{pmatrix}
	0 & 0 & 0\\ 4 & 0 &0 \\ 0 & 0 & 0
	\end{pmatrix} $ also in $\frak h$. From these elements, we can obtain the whole $\frak{sl}(3,\R)$ and then $\bH$ must be the whole group $\SL_3(\R)$.
\end{proof}

Recall that for $s>1,$ the linear function $\psi_s$ on $\frak a$ is given by 
\[\psi_s(\lambda)=(\lambda_1-\lambda_2)+(s-1)(\lambda_1-\lambda_3),\quad\forall \lambda\in\frak a.\]
Now we construct a good set coming from the divergent series $\sum_{\gamma}\vp_s(\gamma)=\sum_\gamma \exp(-\psi_s(\kappa(\gamma))).$
\begin{defi}
    For $\beta>0,x\in\frak a^+$ and $n\in\NN,$ a subset $S\sbs \Gamma$ is called \textit{$(n,\beta,x)$-approximate} if $\# S\geqslant e^{(1-\beta)n}$ and for every $\gamma\in S,$ $\|\frac{1}{n} \kappa(\gamma)-x\|\leqslant \beta.$
\end{defi}

\begin{lem}\label{lem:s nk}
For any $\beta>0$, there exists $x\in\frak a^+$ satisfying $|\psi_s(x)-1|\leq \beta$ and infinitely many $n\in\NN$ such that there exists an $(n,\beta,x)$-approximate subset $S\sbs \Gamma.$
\end{lem}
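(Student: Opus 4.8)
The plan is to derive the lemma from the divergence of $\sum_{\gamma\in\Gamma}\varphi_s(\gamma)=\sum_{\gamma\in\Gamma}\exp(-\psi_s(\kappa(\gamma)))$ by a two-stage pigeonhole argument, where (for $1<s\leq 2$) $\psi_s(\lambda)=(\lambda_1-\lambda_2)+(s-1)(\lambda_1-\lambda_3)$ on $\frak a$. First I would record two elementary facts. Since $s>1$, $\psi_s$ is proper on $\frak a^+$: for $\lambda\in\frak a^+$ one has $\|\lambda\|\leq C(\lambda_1-\lambda_3)\leq C(s-1)^{-1}\psi_s(\lambda)$, because $\lambda_1\geq 0\geq\lambda_3$ and $|\lambda_2|\leq\max(\lambda_1,-\lambda_3)\leq\lambda_1-\lambda_3$. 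Also $\Gamma\subset\SL_3(\ZZ)$ is discrete in $\SL_3(\RR)$, so every sublevel set $\{\gamma\in\Gamma:\psi_s(\kappa(\gamma))\leq N\}$ is bounded — it lies in $\{\gamma:\sigma_1(\gamma)\leq e^{N/(s-1)}\}$, using $\psi_s(\kappa(\gamma))\geq(s-1)\log\sigma_1(\gamma)$ since $\sigma_3(\gamma)\leq 1$ — hence finite. Thus $a_k\defeq\#\{\gamma\in\Gamma: k\leq\psi_s(\kappa(\gamma))<k+1\}$ (call this set the $k$-th band) is finite for each $k$, and since each $\gamma$ in the $k$-th band has $\exp(-\psi_s(\kappa(\gamma)))\leq e^{-k}$ the hypothesis yields $\sum_{k\geq 0}a_ke^{-k}\geq\sum_{\gamma}\exp(-\psi_s(\kappa(\gamma)))=\infty$; consequently, for every $\epsilon>0$ the inequality $a_k>e^{(1-\epsilon)k}$ must hold for infinitely many $k$, as otherwise $\sum_k a_ke^{-k}$ would converge.

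Next I would localize the normalized Cartan projections. Fix $\beta>0$, set $\epsilon\defeq\beta/2$, and choose $\beta'\defeq\beta/(2\max(1,\|\psi_s\|))$, where $\|\psi_s\|$ denotes the operator norm of $\psi_s$ on $\frak a$. For any $\gamma$ in the $k$-th band one has $\tfrac1k\kappa(\gamma)\in\frak a^+$ and $\psi_s(\tfrac1k\kappa(\gamma))\in[1,1+\tfrac1k)$, so $\tfrac1k\kappa(\gamma)$ belongs to the compact set $R\defeq\{\lambda\in\frak a^+: 1\leq\psi_s(\lambda)\leq 2\}$. I would fix once and for all a finite cover $R\subset\bigcup_{j=1}^{M}B(x_j,\beta')$ with centers $x_j\in R\subset\frak a^+$; the number $M=M(\beta')$ is then fixed. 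For each of the infinitely many $k$ with $a_k>e^{(1-\epsilon)k}$, the pigeonhole principle gives an index $j(k)$ such that $\#\{\gamma\text{ in the $k$-th band}:\ \tfrac1k\kappa(\gamma)\in B(x_{j(k)},\beta')\}\geq a_k/M>e^{(1-\beta/2)k}/M$, and the right-hand side is $\geq e^{(1-\beta)k}$ once $k\geq 2\beta^{-1}\log M$.

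Finally, since there are infinitely many admissible $k$ but only $M$ indices, some fixed $j_0$ occurs for infinitely many of them. I would then set $x\defeq x_{j_0}\in\frak a^+$ and, for each such (sufficiently large) $k$, take $n\defeq k$ and $S\defeq\{\gamma\text{ in the $k$-th band}:\ \|\tfrac1n\kappa(\gamma)-x\|\leq\beta'\}$. By construction $\#S\geq e^{(1-\beta)n}$ and $\|\tfrac1n\kappa(\gamma)-x\|\leq\beta'\leq\beta$ for all $\gamma\in S$, so $S$ is $(n,\beta,x)$-approximate; and picking any $\gamma_0\in S$ (which is nonempty) and keeping $1/n\leq\beta/2$ gives $|\psi_s(x)-1|\leq\|\psi_s\|\,\|\tfrac1n\kappa(\gamma_0)-x\|+|\psi_s(\tfrac1n\kappa(\gamma_0))-1|\leq\|\psi_s\|\beta'+\tfrac1n\leq\beta/2+\beta/2=\beta$. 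This yields the desired $x$ together with infinitely many $n$, completing the proof.

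The argument is purely combinatorial, so I do not anticipate a genuine obstacle; the two points that need care are the observations that make $\psi_s|_{\frak a^+}$ proper and the sublevel sets of $\Gamma$ finite (both using $s>1$ together with $\Gamma\subset\SL_3(\ZZ)$), and the coordination of the two pigeonhole steps: one must fix the ball radius $\beta'$ small enough, before counting, that both $\|\tfrac1n\kappa(\gamma)-x\|\leq\beta$ and $|\psi_s(x)-1|\leq\beta$ come out, and then extract a single center $x$ that serves infinitely many scales.
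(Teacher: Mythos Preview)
Your proof is correct and follows essentially the same route as the paper's: use $s>1$ to see that $\psi_s$ is proper on $\frak a^+$ and that the sublevel sets in $\Gamma$ are finite (via discreteness), then extract from the divergence infinitely many well-populated bands and apply a compactness/pigeonhole argument to the normalized Cartan projections to pin down a single center $x$. The only cosmetic difference is that you use integer bands $[k,k+1)$ while the paper uses bands of the form $[(1-\beta/10)n,(1+\beta/10)n]$; your bookkeeping of the constants $\epsilon,\beta'$ is in fact a bit more explicit.
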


\begin{proof}
    Note that $\psi_s(\kappa(\gamma))\geqslant (s-1)\log\sigma_1(\gamma)=(s-1)\log\nm \gamma.$ Hence for every $t>0,$ the set $\lb{\gamma\in\Gamma:\psi_s(\kappa(\gamma))<t}$ is finite by the discreteness of $\Gamma.$ 
    Combining with the hypothesis that the series $\sum_{\gamma\in\Gamma} \exp(-\psi_s(\kappa(\gamma)))$ diverges, there are infinitely many $n$ such that 
    \[\lb{\gamma: \psi_s(\kappa(\gamma))\in [(1-\beta/10)n,(1+\beta/10)n]}\]
    contains at least $e^{(1-\beta/10)n}$ elements.

    Since we have $\psi_s(\kappa(\gamma))\geqslant (s-1)\log\sigma_1(\gamma)$ and $\psi_s(\kappa(\gamma))\geqslant -(s-1)\log\sigma_3(\gamma),$ the set $\psi_s^{-1}([1-\beta/10,1+\beta/10])\cap \frak{a}^+$ is compact. Therefore we can cover it by finitely many balls of radius $\beta/100.$ By a pigeonhole principle, there exists a center of some ball, say $x\in\frak a^+,$ such that for infinitely many $n,$
    \[\lb{\gamma:|\frac{\psi_s(\kappa(\gamma))}{n}-1|\leqslant \frac{\beta}{10},~\|\frac{\kappa(\gamma)}{n}-x\|\leqslant \frac{\beta}{100} }\]
    contains at least $ e^{(1-\beta)n}$ elements. Using the fact that $|\psi_s(\kappa(\gamma))|\leqslant 10\|\kappa(\gamma)\|,$ we obtain $|\psi_s(x)-1|\leqslant\beta,$ which is exactly what we want.
\end{proof}

We will modify an $(n,\beta,x)$-approximate subset to a set which freely generates a free semigroup. Since $\Gamma$ is itself a free semigroup, a subset of $\Gamma$ can freely generate a free semigroup by avoiding the ``prefix relations''. We consider the following concepts.

\begin{defi}
\begin{enumerate}[(1)]
    \item An element $j_1\in \Gamma$ is called \textit{starting with} $j_2\in \Gamma$ if there is $j_3\in \Gamma\sm\{\id\}$ such that $j_1=j_2j_3$. We also say $j_2$ is a \textit{prefix} of $j_1$ if $j_1$ is starting with $j_2.$ 
    \item An element $j_1\in \Gamma$ is called \textit{ending with} $j_2\in \Gamma\sm\{\id\}$ if there is $j_3\in \Gamma$ such that $j_1=j_3j_2$. 
    \item An element $j$ is called \textit{minimal} in a subset $S$ of $\Gamma$ if there is no element $j'\in S$ such that $j$ is starting with $j'$.
\end{enumerate}
\end{defi}
Within a set $S$, a minimal element of $S$ is never a prefix of another minimal element. So the set of minimal elements $S_{\min}$ of $S$ will freely generate a free semigroup. Moreover, the subset $S_{\min}^{*\ell}\subset \Gamma$ satisfies that there is no pair of elements such that one is a prefix of the other. In the following lemma, using hyperbolicity and discreteness, we get a lower bound of minimal elements in a set with approximately the same sizes of Cartan projections.

\begin{lem}\label{lem:multiplicity j}
There exists $C>0$ such that the following hold.
For every $\beta>0,$ $n\in \NN$ and $x\in\frak a^+.$ For any element $j$ in the set
\[\calJ=\{\gamma\in\Gamma:\ \|\frac{\kappa(\gamma)}{n}-x\|\leq \beta,~~\gamma \text{ is ending with }A_1A_2A_3 \},\]
there are at most $e^{C\beta n}$ elements in $\cJ$ which is starting with $j.$
\end{lem}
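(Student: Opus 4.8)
The plan is to fix $j\in\calJ$ and show that if $\gamma\in\calJ$ starts with $j$, then writing $\gamma=j\gamma'$ with $\gamma'\in\Gamma\setminus\{\id\}$, the factor $\gamma'$ has \emph{bounded size}, namely $\sigma_1(\gamma')\le e^{O(\beta n)}$. Since $\Gamma$ is a free semigroup, the assignment $\gamma\mapsto\gamma'$ is injective, so the number of $\gamma\in\calJ$ starting with $j$ is at most the number of admissible $\gamma'$, which I then count using integrality of matrix entries. The step where real work is needed — and the reason the naive word-length argument does not apply — is that $\Gamma$ contains unipotent elements, so $|\gamma'|$ is not comparable to $\log\sigma_1(\gamma')$; the bound on $\sigma_1(\gamma')$ must instead be extracted from positivity of the matrices together with \cref{lem: crucial proj geo lem}.

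First I would prove the \emph{reverse submultiplicativity estimate}: there is a universal $c_0>0$ with $\sigma_1(j\gamma')\ge c_0\,\sigma_1(j)\,\sigma_1(\gamma')$ whenever $j$ ends with $A_1A_2A_3$. Indeed, every $A_i$ has non-negative entries, hence so do $j$ and $\gamma'$, so all vectors $\gamma'e_l,\,je_l$ lie in the positive cone and no cancellation occurs in $\gamma e_l=j(\gamma'e_l)$. Pick $l_0$ maximizing $\|\gamma'e_{l_0}\|$, so $\|\gamma'e_{l_0}\|\ge\sigma_1(\gamma')/\sqrt3$; writing $\gamma'e_{l_0}=\sum_l c_le_l$ with $c_l\ge0$ gives $\max_l c_l\ge\sigma_1(\gamma')/3$. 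Then, with $\omega(v)=v_1+v_2+v_3=\|v\|_1$ on the positive cone,
\[\sigma_1(\gamma)\ge\|\gamma e_{l_0}\|\ge\tfrac1{\sqrt3}\,\omega(\gamma e_{l_0})=\tfrac1{\sqrt3}\sum_l c_l\,\omega(je_l)\ge\tfrac1{\sqrt3}\bigl(\max_l c_l\bigr)\min_l\|je_l\|\ge c_0\,\sigma_1(\gamma')\,\sigma_1(j),\]
where the last inequality uses $\min_l\|je_l\|\ge\epsilon_3\,\sigma_1(j)$, which is \cref{lem: crucial proj geo lem} applied with the integer $3$ (legitimate because $j$ ends with $A_1A_2A_3$, so its last three digits are not all equal). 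Since $j,\gamma\in\calJ$, both $\tfrac1n\kappa(j)$ and $\tfrac1n\kappa(\gamma)$ lie within $\beta$ of $x$, so $|\log\sigma_1(\gamma)-\log\sigma_1(j)|\le C_0\beta n$ for a constant $C_0$ depending only on the fixed norm on $\frak a$; combined with the display, $\sigma_1(\gamma')\le c_0^{-1}e^{C_0\beta n}\eqqcolon T$.

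It then remains to count $\gamma'\in\Gamma$ with $\sigma_1(\gamma')\le T$. Each such $\gamma'$ is a product of the $A_i$, hence a matrix with non-negative integer entries, and every entry $\gamma'_{kl}$ satisfies $|\gamma'_{kl}|\le\|\gamma'e_l\|\le\sigma_1(\gamma')\le T$; therefore there are at most $(\lfloor T\rfloor+1)^9$ possibilities for $\gamma'$. With $T=c_0^{-1}e^{C_0\beta n}$ this is at most $C_1e^{9C_0\beta n}$, and hence at most $e^{C\beta n}$ for a suitable universal $C$ once $\beta n\ge1$ — the only regime relevant to the application, since in \cref{lem:s nk} the parameter $\beta$ is fixed while $n\to\infty$ (alternatively one simply carries the harmless constant $C_1$). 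I expect the reverse submultiplicativity estimate to be the only non-routine point; once $\sigma_1(\gamma')$ is controlled, integrality of the entries finishes the count immediately.
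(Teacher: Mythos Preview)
Your proof is correct and follows essentially the same strategy as the paper: establish a reverse submultiplicativity $\sigma_1(j\gamma')\gtrsim\sigma_1(j)\sigma_1(\gamma')$ via positivity and \cref{lem: crucial proj geo lem}, deduce $\sigma_1(\gamma')\le e^{O(\beta n)}$ from the constraint on $\kappa$, and then count. The only cosmetic differences are that the paper applies \cref{lem: crucial proj geo lem} with $n=2$ to \emph{both} $j$ and $j''$ (whereas you apply it with $n=3$ to $j$ alone, sidestepping any hypothesis on $\gamma'$ by choosing $l_0$ to maximize $\|\gamma' e_{l_0}\|$), and that the paper invokes discreteness of $\Gamma$ and a volume bound in $\SL_3(\R)$ for the count rather than your direct appeal to integrality of the entries.
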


\begin{proof}
For every $V\in\Delta$, let $v=\sum_{i}v_ie_i\in V$ be a unit vector with $v_1,v_2,v_3\geqslant 0.$
Since all the entries of $\gamma e_i$ are non-negative, we have
\[ \|\gamma v\|\geq \max_i\{v_i\|\gamma e_i \|\}\geq \frac{1}{2}\min_i\{\|\gamma e_i\|\}. \]
Take $n=2$ in \cref{lem: crucial proj geo lem}, for any element $\gamma$ with last two digits different, we have  
$\|\gamma e_i\|\geq \epsilon_2\sigma_1(\gamma)$, where $\epsilon_2$ is defined in \cref{lem: crucial proj geo lem}. Hence 
\[\|\gamma v\|\geq \frac{\epsilon_2}{2}\sigma_1(\gamma).\]
Then for any $j, jj''\in \calJ$ with $j''\in\Gamma\sm\{\id\}$ and any unit $v\in V\in\Delta$, since $j''V\in\Delta$ and $j,j''$ end with $A_1A_2A_3$, we obtain
\begin{equation}\label{eqn: j,j'' sigma1}
 \sigma_1(jj'')\geq \|jj''v\|=\frac{\|j(j''v)\|}{\|j''v\|}\frac{\|j''v\|}{\|v\|}\geq \frac{\epsilon_2^2}{4}\sigma_1(j)\sigma_1(j'').     
\end{equation}

Since 
$\|\frac{1}{n}\kappa(jj'')-x\|\leq \beta$ and  $\|\frac{1}{n}\kappa(j)-x\|\leq \beta $, we have 
\begin{equation}\label{eqn: j, j'' sigma upper}
 \log \sigma_1(jj'')\leq 2\beta n+\log\sigma_1(j).   \end{equation}
Combining \eqref{eqn: j,j'' sigma1},\eqref{eqn: j, j'' sigma upper} we get 
\[\sigma_1(j'')\leq 4e^{2\beta n}/\epsilon_2^2. \]
Since the semigroup $\Gamma$ is discrete in $\SL_3(\R)$, up to a constant the number
\[\#\{\gamma\in\Gamma,\ \|\gamma\|\leq t \}  \]
is bounded by the volume of the set $\{g\in \SL_3(\R),\ \|g\|\leq t \} $ , which grows at most polynomially on $t$. Then the possible number of $j''$ is bounded by $q^{C\beta n}$ for some constant $C>0.$ 
\end{proof}

In order to estimate the Lyapunov exponents of the constructing measure, we need to estimate the Cartan projection of products. Let us recall some notions.
\begin{defi}
\begin{enumerate}[(1)]
    \item For an element $g\in \SL_3(\R)$, we call it \textit{$(r,\epsilon)$-loxodromic} for $r,\epsilon>0$, if $\sigma_i(g)/\sigma_{i+1}(g)\geq 1/\epsilon$ and 
\[d(V_{g}^+,H^-_{g} )>r,\ d(V_{g,\wedge^2\R^3}^+,H^-_{g,\wedge^2\R^3} )>r, \]
where $\wedge^2\R^3$ is the wedge representation of $\SL_3(\R)$ and $ V^+_{g,\wedge^2\R^3}=\wt k_g(E_1\wedge E_2)$ and $H^-_{g,\wedge^2\R^3}=k_g^{-1}(E_2\wedge E_3\oplus E_3\wedge E_1)$ are the corresponding attracting point and repelling hyperplane in the projective space $\P(\wedge^2\R^3),$ where $g=\wt k_ga_gk_g\in KA^+K$ is the Cartan decomposition.

\item Let $F$ be a subset in $\SL_3(\R)$. We call $F$ a \textit{$(r,\epsilon)$-Schottky family} if every element in $F$ is $(r,\epsilon)$-loxodromic and for any pair $(g,h)$ in $F$, we have
\[d(V_{g}^+,H^-_{h} )>6r,\ d(V_{g,\wedge^2\R^3}^+,H^-_{h,\wedge^2\R^3} )>6r.\]

\item A set $F$ of $\SL_3(\R)$ is called \textit{$\eta$-narrow} if for $g$ in $F$, the attracting points $V_g^+$ (resp. $V_{g,\wedge^2\R^3}^+$) are within $\eta$-distance of one another and the repelling hyperplanes $H_g^-$ (resp. $H_{g,\wedge^2\R^3}^-$) are within $\eta$ Hausdorff distance of one another. 

\item  A set $F$ is \textit{$\eta$-narrow around $h$} if for $g$ in $F$, the attracting points $V_g^+$ (resp. $V_{g,\wedge^2\R^3}^+$) are within $\eta$-distance to $V_h^+$(resp. $V_{h,\wedge^2\R^3}^+$) and the repelling hyperplanes $H_g^-$ (resp. $H_{g,\wedge^2\R^3}^-$) are within $\eta$ Hausdorff distance to $H_h^-$ (resp. $H_{h,\wedge^2\R^3}^-$). 
\end{enumerate}
\end{defi}
These definitions and properties are originally due to Benoist \cite{benoist_proprietes_1997}. We borrow them from \cite[Corollary 2.16, 2.17]{morris_variational_2023}
\begin{lem}\label{lem:cartan sum}
\begin{enumerate}[(1)]
    \item For $r>4\epsilon>0$, if $F$ is a $(r,\epsilon)$-Schottky family, then the semigroup generated by $F$ is a $(r/2,2\epsilon)$-Schottky family.

    \item Let $E$ be a $\eta$-narrow collection of $(r,\epsilon)$-loxodromic elements with $r> 4\max\{\epsilon,\eta \}$. Then, $E$ is a $(r/4,\epsilon)$-Schottky family.
    
    \item If $F$ is a $(r,\epsilon)$-Schottky family, then there exists $C_r>0$ only depending on $r$ such that for any $g_1,\cdots, g_\ell$ in $F$, we have
\[\|\kappa(g_1\cdots g_\ell)-\sum_{1\leq i\leq \ell}\kappa(g_i) \|\leq \ell\cdot C_r.  \]
\end{enumerate}
\end{lem}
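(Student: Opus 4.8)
The three assertions are the quantitative ping-pong estimates for proximal elements of $\SL_3(\R)$ acting simultaneously on $\P(\R^3)$ and on $\P(\wedge^2\R^3)$; in the form stated they are \cite[Corollary 2.16]{morris_variational_2023} for (1) and (2), and \cite[Corollary 2.17]{morris_variational_2023} for (3), both ultimately going back to Benoist \cite{benoist_proprietes_1997}. The plan is therefore to recall the mechanism rather than reprove everything from scratch. The analytic heart is \cref{lem:gv d v g-}, applied in the standard representation and in $\wedge^2\R^3$: for a $(r,\epsilon)$-loxodromic $g$ and a point $W$ at distance $>r'$ from $H_g^-$ one gets $d(gW,V_g^+)\le \epsilon/r'$, so $g$ contracts the complement of an $r'$-neighbourhood of its repelling hyperplane into an $(\epsilon/r')$-ball around $V_g^+$, and likewise on $\wedge^2\R^3$.

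For (1): given $g_1,\dots,g_\ell\in F$ set $h=g_1\cdots g_\ell$. Reading the product from the right and using the pairwise separation in the definition of a Schottky family together with $r>4\epsilon$, an induction on the number of remaining factors keeps each intermediate image $g_i\cdots g_\ell W$ inside a small ball around $V_{g_i}^+$ (of radius a fixed multiple of $\epsilon/r$) whenever $W$ starts at distance $>r$ from $H_{g_\ell}^-$; in particular $V_h^+$ lies close to $V_{g_1}^+$, and dually, via the transpose action, $H_h^-$ lies close to $H_{g_\ell}^-$, and the same runs in $\wedge^2\R^3$. Feeding these back into the pairwise separations of $F$ and into \cref{lem:gv d v g-} one recovers the $(r/2,2\epsilon)$-loxodromy of $h$ and the Schottky separation between two such products, which is (1). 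For (2): every element of an $\eta$-narrow family of $(r,\epsilon)$-loxodromic elements is in particular $(r/4,\epsilon)$-loxodromic, and $\eta$-narrowness with $\eta<r/4$ transports the loxodromic separations $d(V_g^+,H_g^-)>r$ and $d(V_{g,\wedge^2}^+,H_{g,\wedge^2}^-)>r$ into the mixed ones $d(V_g^+,H_h^-)$ and $d(V_{g,\wedge^2}^+,H_{h,\wedge^2}^-)$ for all pairs, up to the error $\eta$; this is the required Schottky condition with parameter $r/4$.

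For (3): by the mechanism of (1) the product $h=g_1\cdots g_\ell$ is again a Schottky element, so the singular directions of each prefix $g_1\cdots g_k$ and of each suffix $g_{k+1}\cdots g_\ell$ stay in uniformly general position relative to one another; applying \cref{lem:gv d v g-} and its $\wedge^2$ analogue at every step then gives, for $i=1,2,3$, $\big|\log\sigma_i(g_1\cdots g_\ell)-\sum_{j=1}^\ell \log\sigma_i(g_j)\big|\le \ell\, C_r$ with $C_r$ depending only on the separation margin, i.e.\ only on $r$; since $\kappa=(\log\sigma_1,\log\sigma_2,\log\sigma_3)$ this is the stated bound. There is no essential difficulty; the only point needing care is doing the bookkeeping in $\P(\R^3)$ and $\P(\wedge^2\R^3)$ at once, which is exactly why $(r,\epsilon)$-loxodromy and the Schottky condition are imposed on the exterior square as well, and which is what makes $\kappa$ — controlling $\sigma_1/\sigma_2$ and $\sigma_2/\sigma_3$ simultaneously — quasi-additive along the family.
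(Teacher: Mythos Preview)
The paper does not prove this lemma at all: it states the three items and cites \cite[Corollary 2.16, 2.17]{morris_variational_2023} and Benoist \cite{benoist_proprietes_1997} just before the statement, exactly as you do. Your proposal therefore agrees with the paper's treatment, and the sketch you add is a correct account of the underlying ping-pong mechanism; nothing is missing.
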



Now we state the main construction, which gives a good set to support a desired random walk.

\begin{lem}\label{lem:cal J}
For every $\beta>0.$
There exists $N\in \N$, $x\in\frak a^+, \cal J\subset \Gamma,$ 
such that 
\begin{enumerate}[(1)]
\item $|\psi_s(x)-1|\leq \beta$.
\item The semigroup generated by $\cal J$ is Zariski dense.
\item For every $k\in\N$ and $j\in \calJ^{*k}:=\{j_1 \cdots j_k: j_i\in \calJ \}$, $\|\frac{1}{kN}\kappa(j)-x \|\leq 10\beta$. 
\item The set $\cal J$ contains a subset $\cal J_1$ satisfies $\#\calJ_1\geq e^{(1-10C\beta )N}$ and no element in $\calJ_1$ is a prefix of another one, where $C>0$ is the constant given by Lemma \ref{lem:multiplicity j}.
\end{enumerate}
\end{lem}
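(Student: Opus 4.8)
The plan is to construct $\calJ$ by a finite chain of modifications of an approximate set produced by \cref{lem:s nk}, in the spirit of the proof of \cite[Theorem 3.1]{morris_variational_2023}, the essential difference being that every appeal to word length there is replaced here by an estimate on the Cartan projection $\kappa$. Fix, by \cref{lem:s nk}, a point $x\in\frak a^+$ with $|\psi_s(x)-1|\le\beta$ and, for infinitely many $n$, an $(n,\beta,x)$-approximate set $S\subset\Gamma$ with $\#S\ge e^{(1-\beta)n}$. Each modification below shrinks the current set only by a fixed multiplicative constant (harmless for the exponential growth rate once $n$ is large) and perturbs $\kappa$ only by amounts that become negligible after $n$, and then an auxiliary odd integer $l$, are taken large; it is therefore harmless to run the whole argument with $\beta$ replaced by a small fixed fraction of itself, so as to absorb all accumulated constants into the slack in conditions (1)--(4).

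\emph{Four preliminary modifications.} First append $A_1A_2A_3$ to every element of $S$; since $\|\kappa(\gamma A_1A_2A_3)-\kappa(\gamma)\|$ is bounded, the resulting set $W_1$ is still $(n,2\beta,x)$-approximate for $n$ large, and every element of $W_1$ ends with $A_1A_2A_3$ — exactly the hypothesis that makes \cref{lem: crucial proj geo lem}, hence \cref{lem:multiplicity j}, applicable. Next, using that $\Gamma$ is Zariski dense in $\SL_3(\R)$, apply the Abels--Margulis--Soifer theorem \cite[Theorem 3.2]{morris_variational_2023}: there are $r_0=r_0(\Gamma)>0$ and, for each small $\epsilon_0<r_0$, a finite $F_1\subset\Gamma$ such that every $g$ admits $f\in F_1$ with $fg$ $(r_0,\epsilon_0)$-loxodromic; pigeonholing over $F_1$ and prepending one such $f$ to a proportion $\ge(\#F_1)^{-1}$ of $W_1$ yields $W_2$, all of whose elements are $(r_0,\epsilon_0)$-loxodromic, still end with $A_1A_2A_3$, and have $\kappa$ perturbed by $O(1)$. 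Covering $\prod_{i=1,2}\P(V_i)\times\P(V_i^*)$, with $V_1=\R^3$ and $V_2=\wedge^2\R^3$, by $O(\epsilon_0^{-8})$ balls of radius $\epsilon_0$ and pigeonholing again produces an $\epsilon_0$-narrow $W_3\subset W_2$ with $\#W_3\ge c(\epsilon_0)\#W_2$, which by \cref{lem:cartan sum}(2) is a $(r_0/4,\epsilon_0)$-Schottky family. Finally fix $g\in W_3$ and, via \cite[Lemmas 3.4 and 3.6]{morris_variational_2023}, adjoin a finite set $\{\theta_1,\dots,\theta_p\}\subset\Gamma$ drawn from a Zariski dense Schottky subgroup of $\Gamma$ that is narrow around $g$ and generates a Zariski dense subsemigroup; for $\epsilon_0$ small and a suitable constant $r_2=r_2(\Gamma)>0$, \cref{lem:cartan sum}(2) gives that $W_4:=W_3\cup\{\theta_1,\dots,\theta_p\}$ is a $(r_2,\epsilon_0)$-Schottky family.

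\emph{Powering up.} Choose a large odd integer $l$ (depending on $n,\beta,\epsilon_0$ and the $\theta_a$) and put
\[\calJ:=W_3^{*l}\cup\{\theta_1 g^l,\dots,\theta_p g^l\},\qquad N:=nl.\]
Condition (1) is the choice of $x$. For (2): a loxodromic element of $\SL_3(\R)$ has real eigenvalues, so for $h\in W_3$ the Zariski closure $\overline{\langle h\rangle}^{\mathrm{Zar}}$ is an algebraic group with at most two components; since $l$ is odd, \cref{lem: Zariski closure 2} gives $\overline{\langle h^l\rangle}^{\mathrm{Zar}}=\overline{\langle h\rangle}^{\mathrm{Zar}}\ni h$. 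As $h^l\in\calJ$ and the Zariski closure of a semigroup is again a semigroup, $\overline{\langle\calJ\rangle}^{\mathrm{Zar}}$ contains $W_3$, hence $\overline{\langle g\rangle}^{\mathrm{Zar}}$, hence $g^{-l}$; combining with $\theta_a g^l\in\calJ$ gives $\theta_a=(\theta_a g^l)g^{-l}\in\overline{\langle\calJ\rangle}^{\mathrm{Zar}}$ for every $a$, and since $\{\theta_a\}$ generates a Zariski dense subsemigroup we conclude $\overline{\langle\calJ\rangle}^{\mathrm{Zar}}=\SL_3(\R)$. For (3): every $j\in\calJ^{*k}$ is a product of at most $k(l+1)$ elements of the $(r_2,\epsilon_0)$-Schottky family $W_4$, so \cref{lem:cartan sum}(3) bounds $\|\kappa(j)-\sum\kappa(\text{factors})\|$ by $O(kl)$, while the sum of the $\kappa$ of the factors differs from $kNx$ by at most $O(kl\cdot n\beta)$ (from the $\approx kl$ factors in $W_3$) plus $O(k)$ (from the at most $k$ factors $\theta_a$); dividing by $kN$ and taking $n$, then $l$, large yields $\|\frac{1}{kN}\kappa(j)-x\|\le 10\beta$ uniformly in $k$. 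For (4): let $(W_3)_{\min}$ be the set of minimal elements of $W_3$; it is prefix-free, and a cancellation argument in the free semigroup $\Gamma$ shows that $\calJ_1:=(W_3)_{\min}^{*l}$ is again prefix-free with $\#\calJ_1=(\#(W_3)_{\min})^l$; by \cref{lem:multiplicity j} each element of $W_3$ has at most $e^{C\beta n}$ extensions inside $W_3$, so $\#(W_3)_{\min}\ge\#W_3\cdot e^{-C\beta n}\ge e^{(1-O(\beta))n}$, whence $\#\calJ_1\ge e^{(1-10C\beta)N}$ after the initial rescaling of $\beta$. This completes the construction.

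\emph{The main obstacle.} The step where this argument genuinely departs from the Anosov case of \cref{sec:limitset} is (4): producing, out of a large approximate set, a prefix-free subset that generates a free sub-semigroup with random-walk entropy at least $(1-O(\beta))N$. Because $\Gamma$ contains unipotent elements, the word length of $\gamma$ is not comparable to $\|\kappa(\gamma)\|$, so there is no hyperbolic word metric available to bound how many elements of an approximate set share a common prefix. The replacement is the multiplicity estimate \cref{lem:multiplicity j}, which rests on the discreteness of $\Gamma$ in $\SL_3(\R)$ together with the uniform bound $\|\gamma e_i\|\ge\epsilon_2\sigma_1(\gamma)$ of \cref{lem: crucial proj geo lem} — available precisely because the first modification forced every element to end with $A_1A_2A_3$; it is this bound that simultaneously keeps the entropy loss at $O(\beta)N$ and lets the Cartan-projection and Zariski-density properties survive the powering-up.
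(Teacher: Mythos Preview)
Your proposal is correct and follows essentially the same construction as the paper: append $A_1A_2A_3$, apply Abels--Margulis--Soifer with pigeonholing, narrow by a second pigeonhole, adjoin a finite Zariski-dense Schottky set $\{\theta_a\}$ narrow around a fixed $g$, and then power up to $W_3^{*l}\cup\{\theta_a g^l\}$; the verification of (1)--(4) matches the paper's almost step for step, including taking $\calJ_1=(W_3)_{\min}^{*l}$ and invoking \cref{lem:multiplicity j}.

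The one place you diverge is the Zariski-density argument for (2). You impose $l$ odd, argue that $(r,\epsilon)$-loxodromic elements of $\SL_3(\R)$ have real eigenvalues so that $\overline{\langle h\rangle}^{\mathrm{Zar}}$ has at most two components, and then invoke \cref{lem: Zariski closure 2} to get $h\in\overline{\langle h^l\rangle}^{\mathrm{Zar}}$ for every $h\in W_3$. This works, but the paper's route is shorter and avoids the parity constraint entirely: since $g\in W_3$ one has $g^l\in W_3^{*l}\subset\calJ$ directly, and $\theta_a g^l\in\calJ$ by construction, so in the Zariski closure $\bH$ (which is a \emph{group}, not merely a semigroup---you implicitly use this when extracting $g^{-l}$, so you may as well say it) one gets $\theta_a=(\theta_a g^l)(g^l)^{-1}\in\bH$; Zariski density of $\langle\theta_a\rangle$ then finishes. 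No appeal to real eigenvalues or to \cref{lem: Zariski closure 2} is needed.
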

\begin{proof}

Fix an $x\in \frak a^+$ be given by Lemma \ref{lem:s nk}.
Let $S$ be a $(n,\beta,x)$-approximate subset for some sufficiently large $n.$
Now we construct the set $\cal J$ by modifying $S$.

\paragraph{Step 1.}
For every $\gamma\in S$, we add $A_1A_2A_3$ at the end and denote the new set by $W_2$. Since $\|\kappa(\gamma A_1A_2A_3)-\kappa(\gamma)\|\leq \|\kappa(A_1A_2A_3)\|$, the set $W_2$ is $(n,2\beta,x)$-approximate for $n$ large enough.
\paragraph{Step 2.}
We apply a theorem of Abels-Margulis-Soifer \cite{abels1995semigroups}, see also \cite[Theorem 3.2]{morris_variational_2023}. 
\begin{thm}[Abels-Margulis-Soifer]\label{thm: AMS}
 Let $G$ be a Zariski-connected real reductive group and $\Gamma$ be a Zariski-dense subsemigroup. Then there exists $0 < r = r(\Gamma)$ such that for all $0 < \epsilon \leq  r$, there exists a finite subset $F = F(r,\epsilon,\Gamma)\subset \Gamma$ with the property that for every $g \in  G$, there exists $f\in F$ such that $fg$ is $(r, \epsilon)$-loxodromic in $G$.
\end{thm}
We fix $r_0,\epsilon_0>0$ sufficiently small comparing to $\beta$ with $r_0>100\epsilon_0.$
By \cref{thm: AMS}, we could find a finite subset $F_1=F
(r_0,\epsilon_0,\Gamma)$ of $\Gamma.$
Therefore for every element $\gamma\in W_2$, there exists $f\in F_1$ such that $f\gamma $ is $(r_0,\epsilon_0)$-loxodromic. 
By the pigeonhole principle, we can find an $f\in F_1$ such that for at least $(\#F_1)^{-1}$ proportion of $\gamma$ in $W_2$, the product $f\gamma$ is $(r_0,\epsilon_0)$-loxodromic. Fix this $f\in F_1$ and let
\[W_3=\lb{f\gamma:\gamma\in W_2, f\gamma \text{ is $(r_0,\epsilon_0)$-loxodromic }}.\]
Then $W_3$ is $(n,3\beta,x)$-approximate assuming $n$ large enough.
\paragraph{Step 3.}
By compactness, we can cover $\prod_{1\leq i\leq 2}\P(V_i)\times\P(V_i^*)$ with 
 $O(\epsilon_0^{-8})$ balls of radius $\epsilon_0$, where $V_1=\R^3$ and $V_2=\wedge^2\R^3$. By the pigeonhole principle, there exists a subset $W_4\sbs W_3$, such that $\#W_4\gg\epsilon_0^{8}\cdot\#W_3$ and $W_4$ is an $\epsilon_0$-narrow set of $(r_0,\epsilon_0)$-loxodromic elements. For $n$ sufficiently large comparing to $\epsilon_0,$ $W_4$ is $(n,4\beta,x)$-approximate. 

\paragraph{Step 4.}
Before making the next modification, we recall the following lemma from \cite{benoist_proprietes_1997} and \cite{morris_variational_2023}.
\begin{lem}[{\cite[Lemma 3.4]{morris_variational_2023}}]
    There exists $r_1>0$ depending only on $\Gamma$ such that the following hold. For every loxodromic element $g\in G$ and $\epsilon>0,\eta>0$ there exists a Zariski dense $(r_1,\epsilon)$-Schottky subgroup of $\Gamma$ which is $\eta$-narrow around $g.$
\end{lem}

Since $r_1$ is determined by $\Gamma,$ we can assume at first that $r_0<r_1.$
Now we fix an element $g\in W_4.$
Then we can find a Zariski dense $(r_0,\epsilon_0)$-Schottky subgroup $\Gamma_1$ of $\Gamma$ which is $\epsilon_0$-narrow around $g.$ 
By the proof of $k=0$ in Lemma \ref{lem: independent elements} (see also \cite[Lemma 3.6]{morris_variational_2023}), we can find a finite subset $\{\theta_i: i=1,\cdots, p \}\subset \Gamma_1,$ which generates Zariski dense sub-semigroup.
Let $W'=W_4\cup\lb{\theta_i:i=1,\cdots,p},$ which consists of $(r_0,\epsilon_0)$-loxodromic elements. Moreover, every element in $W'$ is $\epsilon_0$-narrow around $g.$ Hence $W'$ is $2\epsilon_0$-narrow. Therefore, $W'$ is a $(r_0/4,\epsilon_0)$-Schottky family and the semigroup it generates is a $(r_0/8,2\epsilon_0)$-Shottky family, by Lemma \ref{lem:cartan sum}.


Take an $m\in\NN$ large enough depending on $\theta_i$, $\beta$ and $\epsilon_0$. Let
\[W_5\defeq W_4^{*m}\cup \{\theta_i g^m,\ i=1,\cdots, p \} \]
Now we verify that the set $\cal J\defeq W_5$ satisfies the condition for $N=nm.$
\begin{enumerate}[(1)]
    \item This is because $x$ is given by Lemma \ref{lem:s nk}.
    \item Let $\bH$ be the Zariski closure of the semigroup generated by $W_5,$ which is an algebraic subgroup of $\SL_3(\RR).$ Note that $g^m\in W_4^{*m}\sbs W_5$ and $\theta_ig^m\in W_5$ for every $i.$ We obtain $\theta_i\in\bH.$ Since $\lb{\theta_i:i=1,\cdots,p}$ generates a Zariski dense subgroup, we have $\bH=\SL_3(\RR).$

\item Recall that the semigroup generated by $W'$ is a $(r,\epsilon)$-Schottky family, where $r=r_0/8$ and $\epsilon=2\epsilon_0.$ 
Also recall that $W_4$ is $(n,4\beta,x)$-approximate. 
By \cref{lem:cartan sum}, for every $h\in W_4^{*m},$ we have
\[\|\frac{\kappa(h)}{N}-x \|=\|\frac{\kappa(h)}{nm}-x\|\leq \frac{C_r}{n}+4\beta.  \]
For $h\in \{\theta_i g^m, i=1,\dots,p\}$, we have
\[\|\frac{\kappa(h)}{N}-x \|=\|\frac{\kappa(h)}{nm}-x\|\leq \frac{C_r}{n}+4\beta+\frac{\nm{\kappa(\theta_i)}}{nm}.\]
By taking $m$ large enough depending on $\theta_i,\beta$ and then taking $n$ large enough, we assume that $C_r/n+\nm{\kappa(\theta_i)}/(nm)< \beta.$ Therefore, the set $\cJ=W_5$ is $(N,5\beta,x)$-approximate.
Since the semigroup generated by $W_5$ is an $(r,\epsilon)$-Schottky family, for every $j\in\cJ^{*k},$ we have
\[\|\frac{\kappa(j)}{kN}-x\|\leqslant\frac{C_r}{N}+5\beta\leqslant 10\beta.\]
\item
We take minimal elements $(W_4)_{\min}$ in $W_4$. Let $\cJ_1=((W_4)_{\min})^{*m}.$ Then there is no element in $\cJ_1$ which is a prefix of another element. Note that $W_4$ is $(n,4\beta,x)$-approximate and every element in $W_4$ is ending with $A_1A_2A_3.$ By \cref{lem:multiplicity j}, we have 
\[\#\cJ_1=(\#(W_4)_{\min})^m\geq (\#W_4/e^{4C\beta n})^m\geq e^{(1-4\beta -4C\beta )nm}\geqslant e^{(1-10C\beta)N}  .\qedhere \]
\end{enumerate}
\end{proof}

Finally, we will construct the random walk and estimate the dimension of the stationary measure. This part is to complete the proof of Lemma \ref{lem: Rauzy variation}.

Let $\cJ,\cJ_1$ be the sets given by Lemma \ref{lem:cal J}. 
We take $\nu=(1-\beta)\nu_1+\beta\nu_2,$ where $\nu_1$ is the uniform measure on $\calJ_1$ and $\nu_2$ is the uniform measure on $\calJ\sm\calJ_1$. 
Then by \cref{lem:cal J} the support of $\nu$ generates a Zariski dense subgroup in $\SL_3(\R)$ and the associated Lyapunov vector is close to $Nx.$ Hence, by \cref{lem:cal J}(3),
\begin{equation}
    |\frac{\psi_s(\lambda(\nu))}{N}-1|\leq|\psi_s(x)-1|+5\cdot\|\frac{\lambda(\nu)}{N}-x\|\leqslant 100\beta.
\end{equation}
Now we should estimate the random walk entropy of $\nu.$ Firstly, note that $\cJ_1$ freely generates a free semigroup, we have
\[h_{\mr{RW}}(\nu_1)=H(\nu_1)\geq (1-10C\beta)N.\]
Moreover, since the support of $\nu_1$ satisfies that no element is a prefix of another one, by certain ``continuity" property of the random walk entropy and freeness of $\Gamma$, we have
\begin{lem}Let  $\nu,\nu_1,\nu_2$ be probability measures which supported on $\Gamma$ such that $\nu=(1-\beta)\nu_1+\beta\nu_2$. If the support of $\nu_1$ is a minimal set (i.e. no element is a prefix of another), then
\begin{equation}
h_{\mr{RW}}(\nu)\geq (1-\beta)h_{\mr{RW}}(\nu_1). 
\end{equation}
\end{lem}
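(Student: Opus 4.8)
The plan is to use the characterization of the random walk entropy as $h_{\mr{RW}}(\nu)=\lim_{k\to\infty}\frac{1}{k}H(\nu^{*k})$, together with a "first-hitting" or prefix-decomposition argument exploiting that $\supp\nu_1$ is a minimal (prefix-free) set in the free semigroup $\Gamma$. The key point is that when $\supp\nu_1$ is prefix-free, a word in $(\supp\nu)^{*k}$ can be uniquely decomposed into maximal blocks coming from $\supp\nu_1$ separated by single letters from $\supp\nu_2$, so that no collisions are created by the concatenation. First I would fix $k$ and sample $\gamma_1,\dots,\gamma_k$ i.i.d.\ from $\nu$; let $T\sbs\{1,\dots,k\}$ be the (random) set of indices $i$ with $\gamma_i\in\supp\nu_1$. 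Conditioned on $T$, the product $\gamma_1\cdots\gamma_k$ determines and is determined by the pair $(T, (\gamma_i)_{i\in T}, (\gamma_i)_{i\notin T})$: the prefix-freeness of $\supp\nu_1$ guarantees that one can read off, from left to right, exactly where each $\nu_1$-block begins and ends (a $\nu_2$-letter never starts with a $\nu_1$-word when embedded, and the decomposition of a free-semigroup element into generators is unique, so the reconstruction is unambiguous). Hence $H(\nu^{*k}\mid T)\geq \sum_{i\in T} H(\nu_1) = |T|\,H(\nu_1)$, using independence of the $\gamma_i$'s and that restricting to a prefix-free set does not lose entropy under concatenation.

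Next I would take expectations over $T$: since $\E|T| = (1-\beta)k$, we get
\[
H(\nu^{*k}) \;\geq\; H(\nu^{*k}\mid T) \;\geq\; \E[\,|T|\,]\,H(\nu_1) \;=\; (1-\beta)k\,H(\nu_1),
\]
where the first inequality is that conditioning decreases entropy. Wait — one must be slightly careful: $H(\nu^{*k})\geq H(\nu^{*k}\mid T)$ only needs that $T$ is a function of the sample, which it is. Then dividing by $k$ and letting $k\to\infty$,
\[
h_{\mr{RW}}(\nu) \;=\; \lim_{k\to\infty}\frac{1}{k}H(\nu^{*k}) \;\geq\; (1-\beta)H(\nu_1) \;=\; (1-\beta)h_{\mr{RW}}(\nu_1),
\]
the last equality because $\supp\nu_1$ freely generates a free semigroup, so $h_{\mr{RW}}(\nu_1)=H(\nu_1)$ (as recalled in the remark after \eqref{eqn: random walk entropy}).

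The main obstacle — and the step requiring the most care — is justifying the unique block-decomposition claim, i.e.\ that from the reduced word $\gamma_1\cdots\gamma_k\in\Gamma$ (written in the free generators $A_1,A_2,A_3$) one can recover the boundaries between consecutive factors $\gamma_i$. This is false for an arbitrary partition into factors, but here it holds because (a) $\Gamma$ is a free semigroup so the letter-word is literally the concatenation of the letter-words of the $\gamma_i$, and (b) the minimality hypothesis on $\supp\nu_1$ means no element of $\supp\nu_1$ is a prefix of another, so scanning left to right one can greedily detect the end of each $\nu_1$-block. I would spell this out as: define a parsing algorithm that reads the word, and at each stage, knowing whether the next factor is a $\nu_1$-factor or a $\nu_2$-factor (which is recorded in $T$), peels off the unique matching element — for $\nu_1$ this uses prefix-freeness, for $\nu_2$ it is a single known letter or a fixed finite set handled trivially. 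One should also note that the same parsing shows distinct choices of $(\gamma_i)_{i\in T}$ (with $T$ and the $\nu_2$-letters fixed) give distinct products, which is exactly what yields $H(\nu^{*k}\mid T)\geq |T| H(\nu_1)$ via subadditivity/independence. Modulo this combinatorial bookkeeping, the inequality follows.
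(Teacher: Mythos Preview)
Your proof is correct and follows essentially the same approach as the paper: both condition on which of the $k$ factors come from $\nu_1$ (your $T$; the paper's binomial expansion together with concavity of $H$), then fix the $\nu_2$-factors and use freeness of $\Gamma$ together with the prefix-free hypothesis on $\supp\nu_1$ to show the concatenation map is injective, yielding the $|T|\,H(\nu_1)$ lower bound and hence $h_{\mr{RW}}(\nu)\geq(1-\beta)H(\nu_1)=(1-\beta)h_{\mr{RW}}(\nu_1)$. One small correction: your aside that ``$H(\nu^{*k})\geq H(\nu^{*k}\mid T)$ only needs that $T$ is a function of the sample'' is off --- conditioning never increases entropy regardless, and $T$ need not be a function of $(\gamma_1,\dots,\gamma_k)$ when $\supp\nu_1\cap\supp\nu_2\neq\varnothing$ (one should define $T$ via independent coin flips) --- but this does not affect the argument.
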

\begin{proof}
By the concavity of the entropy function $H$, 
\[H(\nu_1^{*\ell_1}*\nu_2^{*j_1}*\nu_1^{*\ell_2}*\cdots* \nu_2^{*j_k})\geq \int H(\nu_1^{*\ell_1}*\delta_{g_1}*\nu_1^{*\ell_2}*\cdots* \delta_{g_k})\dd \nu_2^{*j_1}(g_1)\cdots \dd\nu_2^{*j_k}(g_k).\]
We claim that $H(\nu_1^{*\ell_1}*\delta_{g_1}*\nu_1^{*\ell_2}*\cdots* \delta_{g_k})=H(\nu_1^{*(\ell_1+\cdots+\ell_k)})$. It suffices to show that all the elements in the support of LHS are distinct. Otherwise suppose that two elements $(h_1,\cdots,h_k)$ and $(h_1',\cdots, h_k')$ in $\supp\nu_1^{*\ell_1}\times \cdots \times \supp\nu_1^{*\ell_k} $ satisfy $h_1g_1\cdots h_kg_k=h_1'g_1\cdots h_k'g_k $. Since
no element in $\supp\nu_1$ is a prefix of another one, so does $ \supp\nu_1^{*\ell_1}$. Therefore by freeness of $\Gamma$, we obtain that $h_1=h_1'$. By induction we get $h_2=h_2',\cdots,h_k=h_k'$. Therefore
\[H(\nu_1^{*\ell_1}*\nu_2^{*j_1}*\nu_1^{*\ell_2}*\cdots* \nu_2^{*j_k})\geq \int H(\nu_1^{*(\ell_1+\cdots+\ell_k)})\dd \nu_2^{*j_1}(g_1) \cdots d\nu_2^{*j_k}(g_k)=H(\nu_1^{*(\ell_1+\cdots+\ell_k)}).\]

Combining with the concavity of $H$, we get 
\begin{eqnarray*}
    h_{\mr{RW}}((1-\beta)\nu_1+\beta\nu_2)&=&\lim_{\ell\rightarrow\infty}\frac{1}{\ell}H(((1-\beta)\nu_1+\beta\nu_2)^{*\ell})\\
    &\geq& \lim_{\ell\rightarrow\infty}\frac{1}{\ell}\sum_j\binom{l}{j}(1-\beta)^{\ell-j}\beta^jH(\nu_1^{*(\ell-j)})\\
    &=&\lim_{\ell\rightarrow\infty}\frac{1}{\ell}\sum_j\binom{\ell}{j}(1-\beta)^{\ell-j}\beta^j(\ell-j)h_{\mr{RW}}(\nu_1)\\
    &\geq&\lim_{\ell\rightarrow\infty}(1-\beta)\sum_j\binom{\ell-1}{j}(1-\beta)^{\ell-j-1}\beta^jh_{\mr{RW}}(\nu_1)\\
    &=&(1-\beta)h_{\mr{RW}}(\nu_1).
\end{eqnarray*}
\end{proof}
As a consequence $h_{\mr{RW}}(\nu)\geq (1-\beta)h_{\mr{RW}}(\nu_1)\geq (1-\beta)(1-10 C\beta)N$. Moreover, the group generated by $\supp \nu$ is Zariski dense in $\SL_3(\R)$ and satisfies exponential separation property (actually discreteness). 
Let $\mu$ be the unique $\nu$-stationary measure.
By the dimension formula, i.e. Theorem \ref{thm:lyapunov}, we have $\dim\mu=\dim_{\mr{LY}}\mu.$

We admit the fact that $h_{\mr{F}}(\mu,\nu)=h_{\mr{RW}}(\nu)$ and postpone the proof to the end of this section. We give an estimate on $\dim_{\mr{LY}}\mu.$ Since $\psi_s(\lambda(\nu))\geqslant (1-100\beta)N,$ we obtain
\[s(\lambda_1(\nu)-\lambda_3(\nu))\geqslant \psi_s(\lambda(\nu))\geqslant (1-100\beta)N.\]
Then $\lambda_1(\nu)-\lambda_3(\nu)\geqslant \frac{1}{2s}N$ assuming $\beta$ small enough. For a given $0<\ve<s-1,$ we have
\[\psi_{s-\ve}(\lambda(\nu))=\psi_s(\lambda(\nu))-\ve (\lambda_1(\nu)-\lambda_3(\nu))\leq (1+100\beta )N-\frac{\ve}{2s}N.\]
Now we take $\beta>0$ sufficiently small comparing to $\ve,$ we obtain 
\[\psi_{s-\ve}(\lambda(\nu)) \leq (1+100\beta )N-\frac{\ve}{2s}N\leqslant (1-\beta)(1-10C\beta)N\leqslant h_{\mr{RW}}(\nu).\]
Therefore, $\dim\mu=\dim_{\mr{LY}}\mu\geqslant s-\ve.$

To complete the proof, it remains to show the identity between the Furstenberg entropy and the random walk entropy. 

\begin{lem}
    Let $\nu$ be a finitely supported probability measure on $\Gamma$ such that $G_\nu$ is Zariski dense. Let $\mu$ be the unique $\nu$-stationary measure on $\PP(\RR^3),$ then $h_{\mr F}(\mu,\nu)=h_{\mr{RW}}(\nu).$
\end{lem}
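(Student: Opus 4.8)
The plan is to rerun the proof of Proposition~\ref{prop:equal.entropy.geometry}, replacing its geometric input (Lemma~\ref{lem.trivial.fibre}) by an argument based on the symbolic coding $\Phi$ of the gasket. First, $\Gamma\subset\SL_3(\ZZ)$, so $G_\nu$ is a discrete subgroup of $\SL_3(\RR)$; by hypothesis it is Zariski dense, so $\nu$ admits a unique stationary measure $\mu_\cF$ on $\cF(\RR^3)$. Proposition~\ref{prop.entropy.discrete} then yields $h_{\mr{RW}}(\nu)=h_{\mr F}(\mu_\cF,\nu)\geq h_{\mr F}(\mu,\nu)$, so it is enough to prove the reverse inequality $h_{\mr F}(\mu,\nu)\geq h_{\mr F}(\mu_\cF,\nu)$. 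Exactly as in the proof of Proposition~\ref{prop:equal.entropy.geometry}, by \cite[Proposition~2.25]{furman_random_2002} this follows once one shows that the factor map $\pi\colon(\cF(\RR^3),\mu_\cF)\to(\PP(\RR^3),\mu)$, $\xi\mapsto\xi^1$, is relatively measure preserving; equivalently, that in a Rokhlin disintegration $\mu_\cF=\int\mu^\xi\,\dr\mu(\xi)$ the conditional $\mu^\xi$ is a Dirac mass for $\mu$-a.e.\ $\xi$.

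To prove this I would realize $\mu$ and $\mu_\cF$ through the random walk. For almost every trajectory $\omega=(g_1,g_2,\dots)\in\Gamma^{\NN}$, Zariski density gives a simple Lyapunov spectrum and $\tfrac1n\log\bigl(\sigma_i(g_1\cdots g_n)/\sigma_{i+1}(g_1\cdots g_n)\bigr)\to\lambda_i(\nu)-\lambda_{i+1}(\nu)>0$ for $i=1,2$, hence the partial products converge in $\cF(\RR^3)$ to a limit flag $\mathcal F^+(\omega)$ distributed as $\mu_\cF$, with $\mathcal L^+(\omega):=\mathcal F^+(\omega)^1$ distributed as $\mu$. Because $\Gamma$ is the free semigroup on $\{A_1,A_2,A_3\}$, concatenating the words of $g_1,\dots,g_n$ produces nested finite words with a common extension $\bi(\omega)\in\Lambda^{\NN}$, so that $g_1\cdots g_n=A_{i_1}\cdots A_{i_{m_n}}$ with $m_n\uparrow\infty$; by Lemma~\ref{lem: diam Rauzy}, $\mathcal L^+(\omega)=\Phi(\bi(\omega))$. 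Since one right multiplication by some $A_j$ changes each $\log(\sigma_i/\sigma_{i+1})$ by at most a constant depending only on $\{A_1,A_2,A_3\}$, and the block lengths $m_{n+1}-m_n=|g_{n+1}|$ are uniformly bounded, the exponential divergence of $\sigma_1/\sigma_2$ and $\sigma_2/\sigma_3$ along $(m_n)$ propagates to the full sequence; then the standard estimates relating products, singular-value gaps and attracting directions (Lemma~\ref{lem:gv d v g-} and its $\wedge^2$-analogue) show that $A_{i_1}\cdots A_{i_m}$ converges in $\cF(\RR^3)$ as $m\to\infty$, with a limit — equal to $\mathcal F^+(\omega)$ — that depends only on the word $\bi(\omega)$.

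Finally, I would argue that $\bi(\omega)$ is almost surely a Borel function of $\mathcal L^+(\omega)$. Zariski density of $G_\nu$ forces $\mu$ to give no mass to any proper projective subspace, in particular to the countable set $\bigcup_{\gamma\in\Gamma}\gamma\{E_1,E_2,E_3\}$, which is exactly $X\setminus X_{\mr{nice}}$ (a point of $X$ fails to be nice precisely when its coding is eventually constant, i.e.\ when it lies on a $\Gamma$-orbit of a vertex). Hence $\mathcal L^+(\omega)\in X_{\mr{nice}}$ almost surely, and by the remark following Definition~\ref{def: nice word} such a point has a unique coding, so $\bi(\omega)=\Phi^{-1}(\mathcal L^+(\omega))$ is Borel in $\mathcal L^+(\omega)$. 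Combined with the previous paragraph, $\mathcal F^+(\omega)=\mathcal F(\mathcal L^+(\omega))$ for a Borel section $\mathcal F$, i.e.\ $\mu^\xi=\delta_{\mathcal F(\xi)}$ for $\mu$-a.e.\ $\xi$. Using that a $\Gamma$-translate of a nice point is nice and that $\mathcal F$, being a limit of truncations, is equivariant ($\mathcal F(g\xi)=g\mathcal F(\xi)$), one obtains $g\mu^\xi=\mu^{g\xi}$ for $\nu$-a.e.\ $g$ and $\mu$-a.e.\ $\xi$, so $\pi$ is relatively measure preserving and $h_{\mr F}(\mu,\nu)=h_{\mr F}(\mu_\cF,\nu)=h_{\mr{RW}}(\nu)$.

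The hard part is the uniqueness of the limit flag over $\mu$-a.e.\ point: propagating the singular-gap divergence from random-walk times to all truncation times, and identifying the almost sure limit point with a uniquely coded point of $X$ via non-atomicity of $\mu$. This is the only place the specific coding structure of the Rauzy gasket is needed; the rest is the template of Proposition~\ref{prop:equal.entropy.geometry}.
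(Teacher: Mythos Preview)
Your proposal is correct and follows the same template as the paper: reduce to showing that the disintegration of $\mu_\cF$ over $\mu$ has Dirac conditionals, then conclude via the relative measure-preserving criterion \cite[Proposition~2.25]{furman_random_2002}, exactly as in Proposition~\ref{prop:equal.entropy.geometry}. The only tactical difference lies in the trivial-fibre step. You argue that the \emph{full} truncated sequence $A_{i_1}\cdots A_{i_m}$ converges in $\cF(\RR^3)$, by propagating the singular-value gaps from the random-walk times $m_n$ to all $m$ via the uniform bound on block lengths; hence the limit flag depends only on $\bi(\omega)$. The paper instead takes two sample paths $b,b'$ with $\xi(b)=\xi(b')$, uses unique coding on $X_{\mr{nice}}$ to see that they are different partitions of the same infinite word, aligns the partial products so that $(b_1'\cdots b_{m_n}')^{-1}b_1\cdots b_n$ is bounded, and passes to a subsequence where this difference is constant to conclude that the limits in $\mathrm{End}(\wedge^2\RR^3)$ have the same image. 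Your route is conceptually a bit cleaner but needs one more ingredient than you cite: singular-gap growth plus Lemma~\ref{lem:gv d v g-} alone do not give convergence of $V_{g_m}^+$; you implicitly use a perturbation estimate of the form $d(V_{gh}^+,V_g^+)\lesssim_{\,h}\sigma_2(g)/\sigma_1(g)$ for bounded $h$ (and its $\wedge^2$-analogue). The paper's subsequence argument in $\mathrm{End}(\wedge^2\RR^3)$ sidesteps this.
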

\begin{proof}
    As discussions after Definition \ref{def: nice word}, $X\sm X_{\mr{nice}}$ is a countable set and hence a $\mu$-null set. We consider the space $B=\SL_3(\RR)^{\times \NN}$ endowing with the probability measure $\nu^{\NN}.$ For almost every $b=(b_1,b_2,\cdots)\in B,$ we consider the Furstenberg boundary $\mu_b$ given by the weak* limit
\[\lim_{n\to\infty} (b_1b_2\cdots b_{n-1})_*\mu,\]
which is a Dirac measure \cite[Lemma 4.5]{benoist_random_2016} and denoted by $\delta_{\xi(b)}.$ Because $\mu=\int \delta_{\xi(b)}\dr\nu^{\NN}(b)$ and $\mu(X\sm X_{\mr{nice}})=0,$  there exists a conull set $\Omega\sbs B$ such that $\xi(b)\in X_{\mr{nice}}$ for every $b\in\Omega.$ 

Note that $\nu$ also induces a random walk on the flag variety $\cF=\cF(\RR^3)$ with a unique stationary measure $\nu_{\cF}.$ We can also consider the Furstenberg boundary on the flag variety. Then for almost every $b\in B,$ we can associate a Dirac measure $(\mu_\cF)_b=\delta_{\xi_\cF(b)}.$ Denote $\xi_\cF(b)=(\xi(b),\xi_2(b))\in\cF(\RR^3),$ where $\xi_2(b)$ is a two dimensional subspace in $\RR^3.$
Then for every sequence of positive numbers $(\chi_n),$ the image of every nonzero limit of $(\chi_n b_0b_1\cdots b_{n-1})$ in $\mr{End}(\wedge^2\R^3)$ is $\xi_2(b).$ 
We aim to show that $\xi_\cF(b)$ is uniquely determined by $\xi(b)$ for every $b\in \Omega .$

Let $b\ne b'\in\Omega,$ then $\xi(b)=\xi(b')\in X_{\mr{nice}}.$ Recall that every point in $X_{\mr{nice}}$ is uniquely coding by an element in $\lb{1,2,3}^\NN.$ We know that $b'=(b_1',b_2',\cdots)$ and $b=(b_1,b_2,\cdots)$ are different partitions of a same infinite word $A_{i_1}A_{i_2}\cdots A_{i_n}\cdots$. Since $\nu$ is finitely supported, for every $n\geqslant 0$ we can find $m_n\geqslant 0$ such that
    \[|b_1b_2\cdots b_n| \leq |b_
    1' b_2'\cdots b_{m_n}'|\leq |b_1b_2\cdots b_n|+N_{\nu}, \]
where $|\cdot|$ denotes the word length in the semigroup $\Gamma$ with respect to the free generating set $\lb{A_1,A_2,A_3},$ where $N_\nu$ is a constant only depending on $\nu.$

Passing to a subsequence, we can assume that $(b_1'\cdots b_{m_n}')^{-1}b_1\cdots b_{n}$ is a fixed element in $\SL_3(\RR).$ 
Taking $\chi_n=\|b_1b_2\cdots b_{n}\|^{-1}$ and passing to a subsequence if necessary, we also assume that $(\chi_n b_1b_2\cdots b_{n})$ admits the nonzero limit in $\mr{End}(\wedge^2\R^3).$ Meanwhile, $(\chi_n b_1'\cdots b_{m_n}')$ converges to this limit composing with an element in $\SL_3(\RR).$ Therefore, these two limits has the same image. By our discussions on $\xi_2,$ we obtain $\xi_2(b')=\xi_2(b)$ and hence
\begin{equation}\label{equ:equal b b'}
    \xi_\calF(b)=\xi_\calF(b').
\end{equation}


    Now we consider a conditional measure at a $\mu$-full measure set $\xi(\Omega)\sbs\PP(\RR^3)$. For every $x\in\xi(\Omega)$, choose an element  $b\in\Omega$ with $\xi(b)=x\in \P(\R^3)$ and let 
    \[\mu^x=\delta_{\xi_\calF(b)}. \]
    Due to \cref{equ:equal b b'}, we have 
    \[\mu_\calF=\int_\Omega \delta_{\xi_\calF(b)}\dd\nu^{\N}(b)=\int_{\P(\R^3)} \mu^x\dd\mu(x). \]
    So the family of measures $\mu^x$ is actually the disintegration of $\mu_\cF$ with respect to the natural projection $\pi:\cF\to\PP(\RR^3).$ Hence we obtain the trivial-fiber property. Now we can apply a same argument as in the end of the proof \cref{prop:equal.entropy.geometry} by using relative measure preserving property. We obtain the desired equality between the Furstenberg entropy and the random walk entropy.
\end{proof}

\appendix
\section{Proof of \cref{cor: 1.5}}

In the appendix, we 
prove \cref{cor: 1.5}, that is
\begin{equation}\label{equ:geq1.5}
    s_{\mathrm{A}}(\Gamma_{\sR})\geq 3/2.
\end{equation}

The secret sauce is to study the semigroup $\Gamma_0=\langle A_1,A_2\rangle$. Let $I=I(E_1,E_2)$ be the arc given by $\{\R(se_1+te_2):\ s,t\in\R_{\geq 0}\}$, which is preserved by $\Gamma_0$. 
The idea is as follows.  We can view the semigroup $\Gamma_0$ in two aspects:  $\Gamma_0$, as a subsemigroup in $\mathrm{GL}(E_1\oplus E_2)$, has critical exponent $1$ due to its limit set is the whole $I$; $\Gamma_0$ is also a subsemigroup of $\SL_3(\R)$. Then, we compare the singular values in these two settings and deduce that its affinity exponent is at least $1.5$. The argument is similar to the one of the dimension jump of the limit sets of representations in Barbot's component as in \cite{LPX}. The difficulty in the proof is due to the non-uniform hyperbolic behaviour of the Rauzy semigroup, and we borrow estimates from \cref{sec:upper rauzy} to deal with this issue.

\begin{lem}\label{lem:a1a2}
    There exists $\epsilon>0$ such that for any $\gamma\in \Gamma_0 $ with the last two digits different, we have
    \begin{align}
        \sigma_2(\gamma)\geq \epsilon,\ \epsilon |\gamma I|\leq 1/\sigma_1(\gamma)^2,
    \end{align}
    where $|\gamma I|$ is the length of the arc $\gamma I$.
\end{lem}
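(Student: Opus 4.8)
The plan rests on the block-triangular shape of elements of $\Gamma_0$ with respect to the splitting $\R^3=V\oplus\R e_3$, $V=E_1\oplus E_2$. Since $A_1=\begin{pmatrix}1&1&1\\0&1&0\\0&0&1\end{pmatrix}$ and $A_2=\begin{pmatrix}1&0&0\\1&1&1\\0&0&1\end{pmatrix}$ are of the form $\begin{pmatrix}M&w\\0&1\end{pmatrix}$ with $M$ a nonnegative $2\times2$ matrix of determinant $1$, and this shape is preserved under multiplication (the $V$-block being multiplicative), every $\gamma\in\Gamma_0$ can be written $\gamma=\begin{pmatrix}M_\gamma&w_\gamma\\0&1\end{pmatrix}$ with $M_\gamma=\gamma|_V$, $\det M_\gamma=1$. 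In particular $\gamma e_1=M_\gamma e_1$ and $\gamma e_2=M_\gamma e_2$ (as vectors of $V\subset\R^3$), $\det\gamma=1$ so $\sigma_1(\gamma)\sigma_2(\gamma)\sigma_3(\gamma)=1$, and $\gamma E_i=[M_\gamma e_i]$ for $i=1,2$.

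The only external ingredient is \cref{lem: crucial proj geo lem} with $n=2$, applicable because $\Gamma_0\subset\Gamma_{\sR}$ and, by hypothesis, the last two digits of $\gamma$ are distinct: it yields a constant $\epsilon_2>0$, independent of $\gamma$, with $\|\gamma e_i\|\ge\epsilon_2\sigma_1(\gamma)$ for $i=1,2,3$. In view of the first paragraph, $\|M_\gamma e_1\|,\|M_\gamma e_2\|\ge\epsilon_2\sigma_1(\gamma)$, and hence also $\sigma_1(M_\gamma)=\|M_\gamma\|\ge\epsilon_2\sigma_1(\gamma)$.

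Both conclusions then follow by elementary linear algebra. For $\sigma_2(\gamma)\ge\epsilon$: the matrix $\gamma^{-1}=\begin{pmatrix}M_\gamma^{-1}&-M_\gamma^{-1}w_\gamma\\0&1\end{pmatrix}$ has $M_\gamma^{-1}$ as a $2\times2$ submatrix, so $\sigma_1(\gamma^{-1})\ge\sigma_1(M_\gamma^{-1})=\sigma_1(M_\gamma)\ge\epsilon_2\sigma_1(\gamma)$, the middle equality using $\det M_\gamma=1$; therefore $\sigma_3(\gamma)=\sigma_1(\gamma^{-1})^{-1}\le\epsilon_2^{-1}\sigma_1(\gamma)^{-1}$, and $\sigma_2(\gamma)=(\sigma_1(\gamma)\sigma_3(\gamma))^{-1}\ge\epsilon_2$ (alternatively, $\sigma_1(\gamma)\sigma_2(\gamma)=\|\wedge^2\gamma\|\ge\|M_\gamma\|$ because $M_\gamma$ appears as a block of $\wedge^2\gamma$). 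For the bound on $|\gamma I|$: the arc $\gamma I$ is the sub-arc of $I$ joining $[M_\gamma e_1]$ to $[M_\gamma e_2]$, so — using that $I$ is a straight segment in $\wt\Delta$ and that the euclidean and projective metrics on $\Delta$ are bi-Lipschitz — $|\gamma I|\le C\,d(\gamma E_1,\gamma E_2)$ for an absolute $C$; since $\|\gamma e_1\wedge\gamma e_2\|=|\det M_\gamma|=1$, this equals $C\,\|M_\gamma e_1\|^{-1}\|M_\gamma e_2\|^{-1}\le C\epsilon_2^{-2}\sigma_1(\gamma)^{-2}$. Taking $\epsilon:=\min\{\epsilon_2,\,C^{-1}\epsilon_2^2\}$ proves the lemma.

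I do not anticipate a genuine obstacle here: the real difficulty — uniform positivity of $\|\gamma e_i\|$ relative to $\sigma_1(\gamma)$ — is handled upstream in \cref{lem: crucial proj geo lem}, and the remaining steps are bookkeeping with singular values of a block-triangular matrix together with $\det M_\gamma=1$. The one point that must not be overlooked is that the "last two digits distinct" hypothesis is indispensable for invoking \cref{lem: crucial proj geo lem}: without it $\|\gamma e_i\|$ need not be comparable to $\sigma_1(\gamma)$ — for instance $\gamma=A_1^{k}$ has $\|\gamma e_1\|=1$ while $\sigma_1(\gamma)\to\infty$.
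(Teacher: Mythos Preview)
Your argument is correct. The bound on $|\gamma I|$ is handled exactly as in the paper: both of you observe that $\gamma e_1\wedge\gamma e_2=(\det M_\gamma)\,e_1\wedge e_2$ has norm $1$ and then invoke \cref{lem: crucial proj geo lem} with $n=2$ to get $\|\gamma e_i\|\geq\epsilon_2\sigma_1(\gamma)$.

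For the bound $\sigma_2(\gamma)\geq\epsilon$, you take a genuinely different and more direct route. The paper argues geometrically: it bounds $\mathrm{Area}(\gamma\Delta)$ from below by $\sigma_1(\gamma)^{-3}$ (using $\omega(\gamma e_i)\ll\sigma_1(\gamma)$), divides by $|\gamma I|$ to bound one of the other side lengths of $\gamma\Delta$ from below by $\sigma_1(\gamma)^{-1}$, and then appeals to \cref{lem: proj geo lem}(1) to turn this into $\sigma_2(\gamma)\gg 1$. Your approach instead exploits the block--triangular shape $\gamma=\begin{pmatrix}M_\gamma&*\\0&1\end{pmatrix}$, $\det M_\gamma=1$, directly: since $M_\gamma^{-1}$ is a $2\times 2$ submatrix of $\gamma^{-1}$ one has $\sigma_3(\gamma)^{-1}=\sigma_1(\gamma^{-1})\geq\sigma_1(M_\gamma^{-1})=\sigma_1(M_\gamma)\geq\epsilon_2\sigma_1(\gamma)$, whence $\sigma_2(\gamma)=(\sigma_1\sigma_3)^{-1}\geq\epsilon_2$. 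This is shorter and avoids the detour through \cref{lem: proj geo lem}; the paper's route, on the other hand, stays purely in the projective--geometric language used throughout \cref{sec:upper rauzy} and would adapt more readily to situations without an invariant $2$--plane.
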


\begin{proof}
    It is a consequence of \cref{lem: crucial proj geo lem} and \cref{lem: proj geo lem}.

    Due to $\gamma$ preserving the subspace generate by $E_1,E_2$, and its restriction having determinant 1, we obtain from \cref{lem: crucial proj geo lem} that
    \begin{equation}\label{equ:gammaI}
     |\gamma I|=d(\gamma E_1,\gamma E_2)=\frac{\|\gamma e_1\wedge \gamma e_2\|}{\|\gamma e_1\|\|\gamma e_2\|}=\frac{1}{\|\gamma e_1\|\|\gamma e_2\|}\leq 1/(\epsilon_2\sigma_1(\gamma))^2. 
    \end{equation}

    Due to the same computation as in \cref{lem: proj geo lem} and from definition, we have 
    \[ \mathrm{Area}  (\gamma\Delta)\gg\|\gamma e_1\wedge \gamma e_2\wedge \gamma e_3 \|/\prod_{1\leq i\leq 3}\omega(\gamma e_i) \gg 1/\sigma_1(\gamma)^3. \]
    Taking into accout \cref{equ:gammaI},
    \[ \max\{ |\gamma I(E_2,E_3)|,|\gamma I(E_1,E_3)| \}\geq \mathrm{Area}(\gamma\Delta)/|\gamma I|\gg 1/\sigma_1(\gamma).  \]
    Combining with \cref{lem: proj geo lem}, we have
    \[ \sigma_2(\gamma)\gg  \sigma_1(\gamma)\diam(\gamma\Delta)\geq \sigma_1(\gamma)\max\{ |\gamma I(E_2,E_3)|,|\gamma I(E_1,E_3)| \}\gg 1.  \]
    The proof is complete.
\end{proof}

\begin{proof}[Proof of \cref{equ:geq1.5}]
For elements satisfying \cref{lem:a1a2}, we have
\[ \varphi_{3/2}(\gamma)=\left(\frac{\sigma_2}{\sigma_1}\right)(\gamma)\left(\frac{\sigma_3}{\sigma_1}\right)^{1/2}(\gamma)= \frac{\sigma_2(\gamma)^{1/2}}{\sigma_1(\gamma)^2}\geq \frac{\epsilon^{1/2}}{\sigma_1(\gamma)^2}\geq \epsilon^2|\gamma I|. \]
Therefore
\begin{align*}
    \sum_{\gamma\in\Gamma_\sR} \varphi_{3/2}(\gamma)\geqslant \sum_{\gamma\in \Gamma_0} \varphi_{3/2}(\gamma) &\geq \sum_{\gamma\in \Gamma_0 , \text{ last two digits different}} \varphi_{3/2}(\gamma)\\
    &\geq \epsilon^2 \sum_{\gamma\in \Gamma_0 , \text{ last two digits different}} |\gamma I|.
\end{align*}
Due to \cref{lem:Xnice}, every point in $I\cap X_{\mathrm{nice}}$ is covered infinity many times by the cover 
\[\{\gamma I,\ \gamma\in \Gamma_0  \text{ whose last two digits are different} \}.\]
Since $I-(I\cap X_{\mathrm{nice}})$ is countable, the righthand side of the last inequality is infinite, which implies $s_{\mathrm{A}}(\Gamma_{\sR})\geq 1.5$.    
\end{proof}


\begin{small}

\addcontentsline{toc}{section}{References}
\bibliography{bibfile}

\bigskip
\noindent 
    Yuxiang Jiao.
	{\textit{Peking University, No.5 Yiheyuan Road, Haidian District, Beijing, China.}}  \\
	email: {\tt ajorda@pku.edu.cn} 
\bigskip
    
 \noindent 
	Jialun Li. {\it CNRS-Centre de Math\'ematiques Laurent Schwartz, \'Ecole Polytechnique, Palaiseau, France.}  \\
	email: {\tt jialun.li@cnrs.fr} 
		
		\bigskip   

   \noindent 
   Wenyu Pan.
	{\textit{University of Toronto, 40 St. George St., Toronto, ON, M5S 2E4, Canada.}}  \\
	email: {\tt wenyup.pan@utoronto.ca} 
		
		\bigskip   

   \noindent
   Disheng Xu. 
	{\textit{Great Bay University,  Songshanhu International Community, Dongguan, Guangdong, 523000, China.}}  \\
	email: {\tt xudisheng@gbu.edu.cn} 
		
		\bigskip   
\end{small}
	
\end{document}